\apptocmd{\sloppy}{\hbadness 10000\relax}{}{}
\definecolor{azure}{rgb}{0.0, 0.5, 1.0}
\definecolor{darkred}{rgb}{0.8, 0.0, 0.0}
\definecolor{DarkBlueNavy}{RGB}{0, 0, 128}
\definecolor{nicegreen}{RGB}{76, 175, 80}
\newcommand{\conditem}[2]{%
  \item[#2]%
  \protected@edef\@currentlabel{#2}%
  \label{#1}%
}
\newtheorem{theorem}{Theorem}[section]
\newtheoremstyle{nonitalic} 
{3pt} 
{3pt} 
{\upshape} 
{} 
{\bfseries} 
{.} 
{.5em} 
{} 
\theoremstyle{nonitalic}
\theoremstyle{nonitalic}
\newtheorem{remark}[theorem]{Remark}
\theoremstyle{plain} 
\newtheorem{definition}[theorem]{Definition}
\newtheorem{proposition}[theorem]{Proposition}
\newtheorem{lemma}[theorem]{Lemma}
\newtheorem{corollary}[theorem]{Corollary}
\newtheorem*{theorem*}{Theorem}
\numberwithin{equation}{section}
\newcommand{\N}{\mathbb{N}}
\newcommand{\R}{\mathbb{R}}
\newcommand{\dom}{\operatorname{dom}}
\newcommand{\Deltadir}{\Delta_{\textnormal{dir}}}
\newcommand{\Deltadirn}{\Delta_{\textnormal{dir},n}}
\newcommand{\Amin}{\mathcal{A}_{n}}
\newcommand{\sbullet}{%
	\hbox{\fontfamily{lmr}\fontsize{.6\dimexpr(\f@size pt)}{0}\selectfont\textbullet}}
\DeclareRobustCommand{\mathbullet}{\accentset{\sbullet}}
\newcommand{\Hm}[1]{\leavevmode{\marginpar{\tiny%
			$\hbox to 0mm{\hspace*{-0.5mm}$\leftarrow$\hss}%
			\vcenter{\vrule depth 0.1mm height 0.1mm width \the\marginparwidth}%
			\hbox to 0mm{\hss$\rightarrow$\hspace*{-0.5mm}}$\\\relax\raggedright
			#1}}}
\begin{document}

\title{Semilinear diffusion equations on infinite graphs:\\ the dissipative and Lipschitz cases}
\date{}
\author{Elvise Berchio\\
Department of Mathematical Sciences "G.L. Lagrange"\\ Politecnico di Torino\\
Torino 10129 (Italy)\\
\textit{elvise.berchio@polito.it}
\And 
Davide Bianchi\\
	School of Mathematics (Zhuhai)\\
	Sun Yat-sen University\\
	Zhuhai 518055 (China)\\
    \textit{bianchid@mail.sysu.edu.cn}\\
\And
Alberto G. Setti\\
Dipartimento di Scienza e Alta Tecnologia\\
	Universit\`a dell'Insubria\\
	Como 22100 (Italy)\\
\textit{alberto.setti@uninsubria.it}\\
\And
Maria Vallarino\\
Department of Mathematical Sciences "G.L. Lagrange"\\ Politecnico di Torino\\
Torino 10129 (Italy)\\
\textit{maria.vallarino@polito.it}
}

\setcounter{tocdepth}{2}

\maketitle

\begin{abstract}
We study a class of semilinear diffusion equations on infinite, connected, weighted graphs, focusing on two types of nonlinearities: monotone decreasing and Lipschitz continuous. Under minimal structural assumptions on the graph, we establish existence, uniqueness, and regularity of mild solutions for initial data in $\ell^p$ spaces, with $1\leq p<\infty$. Our approach relies on time discretization via an implicit Euler scheme and an exhaustion technique using Dirichlet subgraphs. As a by-product, we obtain existence and uniqueness results for a related time-independent equation. Finite-time extinction and positivity for solutions under a specific forcing term are also proved. 

\end{abstract}

\keywords{graphs; semilinear diffusion equation; existence of solutions; uniqueness of solutions.}
{\textbf{\textit{ 2020 AMS Subject Classification:}}
35K55, 35A01, 05C22, 05C63, 47H06.}

\section{Introduction}

Given a connected graph $G=(X,w,\kappa,\mu)$ (see Section \ref{sec:notation} for the precise definition), we consider the problem

\begin{equation}\label{model_problem}\tag*{(Model Problem 1)}
    \begin{cases}
		\partial_t u(t,x) + \Delta u(t,x) =  f(u(t,x))+h(t,x)  &   \quad (t,x) \in (0,T)\times X,\\
		u(0,x) = u_0(x) &  \quad x \in X,
	\end{cases}
	\end{equation}
where $\Delta$ is the (negative) Laplacian associated with the graph and $T>0$. A simple transformation allows to show the equivalence of \ref{model_problem} on subgraphs to a related initial boundary value problem which constitutes our \ref{model_problem2}, see Section \ref{MP2}.

 This class of semilinear diffusion equations on graphs has many different applications in 
network dynamics~\cite{proskurnikov2017tutorial,proskurnikov2018tutorial}, 
population and epidemic diffusion models~\cite{pastor2015epidemic,allen2017primer}, 
opinion formation and consensus problems on social networks~\cite{acemouglu2013opinion}, 
and more generally in nonlinear diffusion processes on discrete structures, where the underlying graph encodes the geometry of interactions. 
Further applications include chemical and biological diffusion models, where reactions occur at the nodes while transport follows the graph topology~\cite{van2009stochastic,mckay2014stochastic}, 
as well as graph-based learning and image processing, where semilinear diffusions are exploited for tasks such as denoising, segmentation, and semi-supervised classification~\cite{bertozzi2012diffuse,elmoataz2008nonlocal}. 

Due to  the many   applications, there is a rapidly growing literature on semilinear and nonlinear equations on graphs. As a representative example we mention~\cite{bianchi2022generalized,ma2022porous, GLY16a, grigor2016yamabe, HX23, LSZ23, LY21,  KS18,HM15,monticelli2025nonexistence}.

In particular, global existence and blow-up of solutions for parabolic equations has been studied by several authors in~\cite{grillo2024blow,punzo2025semilinear,wu2024blow,wu2021blow,monticelli2024nonexistence,lin2017existence,yong2018blow,slavik2017well,hu2026life}. 

 In this work we study the case where $u_0 \in \ell^p(X,\mu)$, $h \in L^1_{\textnormal{loc}}\left([0,T]; \ell^p\left(X,\mu\right)\right)$ ($1\leq p <\infty$) and $f: \R \rightarrow \R$ is either monotone or Lipschitz continuous, namely it satisfies one of the following conditions: 
\begin{enumerate}[label=\textbf{(F\arabic*)},ref=\textbf{(F\arabic*)}]
    \item\label{F1:monotonicity} $f$ is continuous and monotone decreasing with $f(0)=0$;
    \item\label{F2:Lipschitz} $f$ is uniformly Lipschitz with constant $L>0$ and $f(0)=0$.
\end{enumerate}
Condition~\ref{F1:monotonicity} requires the forcing term \(f\) to act as a sink, i.e., \(f(u)\) tends to dissipate the dynamics for \(u>0\). A typical example is  
\begin{equation}\label{specific}
f(t) = -t|t|^{q-1}, \quad q>0\,.
\end{equation}
In this case, the equation in~\ref{model_problem} is known as the \textit{\(w\)-heat equation with absorption}, where \(w\) denotes the edge-weight function that encodes the graph topology (see Section~\ref{sec:notation} for details).  For this specific forcing term in the finite-graph setting, with the addition of homogeneous Dirichlet boundary conditions, uniqueness, extinction, positivity and asymptotic behavior of solutions were proved in \cite{chung2011extinction} by applying spectral analysis (using eigenvalues and eigenvectors of the Laplacian matrix) and by exploiting energy functionals (see also \cite{chung2017plaplacian} for the $p-$Laplacian version). For the dissipative case in \(\R^N\) with \(f=-\beta\), where \(\beta\) is a maximal monotone operator, see~\cite[Chapter~5]{barbu2010nonlinear}.  

By contrast, Condition~\ref{F2:Lipschitz} is characteristic of saturating nonlinear forcing terms, which frequently arise in biological models (see, e.g.,~\cite{cantrell2004spatial}). For a detailed treatment of this case, see also~\cite[Chapter~31.6]{zeidler2013nonlinear}.

When Condition \ref{F1:monotonicity} or \ref{F2:Lipschitz} hold, we establish existence, uniqueness, and regularity of solutions to \ref{model_problem} for $\ell^p$ data, under fairly weak assumptions on $X$ (see Theorems \ref{thm:main2} and \ref{thm:regularity}). The proof is carried out by discretizing time and constructing approximate solutions via an implicit Euler scheme. A crucial step involves demonstrating the accretivity or $\omega$-accretivity of the operator $\Delta-f$, which ensures convergence of the approximations to a mild solution. This is achieved by introducing a suitable increasing exhaustion of the graph through finite node subsets, each equipped with a corresponding Dirichlet subgraph structure. Notably, this method does not require any information about heat kernel or spectral properties of $\Delta$, making it particularly effective for general graph structures and for applications. 

As a by-product of our approach, we also obtain existence, uniqueness, and an a priori estimate for the time-independent equation:
\begin{equation*}
  \Delta u (x)+ \alpha u(x)=f(u(x))+ \alpha g(x)\quad  \quad x \in X
\end{equation*}
with $g \in \ell^p(X,\mu)$ and $\alpha>0$ if condition \textnormal{\ref{F1:monotonicity}} holds ($\alpha>L$, if condition \textnormal{\ref{F2:Lipschitz}} holds), see Theorem \ref{th: F1}. 
The equation above resembles  the one studied in \cite{grigor2017existence} and in a number of subsequent papers, see e.g., \cite{yang2023existence} and references therein; however, there are substantial differences in both the assumptions on the graph and the nature of the nonlinearity which reflect the different strategies employed. In particular, the hypotheses in \cite{grigor2017existence} are specifically designed to provide a structural framework suitable for the application of critical point theory, especially the mountain pass theorem.

The paper is organized as follows: Section \ref{sec:notation} provides the necessary preliminaries and notation, including the formal definition of graphs, function spaces, and the graph Laplacian, as well as the concept of Dirichlet subgraphs. In this section, we also introduce the second model problem on subgraphs \ref{model_problem2} and we show how it can be reformulated as \ref{model_problem}. In Section \ref{main}, we first introduce the definitions of classical, strong, and mild solutions to~\ref{model_problem}, and then present the main results concerning the existence, uniqueness, and regularity of mild solutions. Section~\ref{auxiliary section} is devoted to the analysis of a related time-independent equation associated with the semilinear diffusion problem. We establish existence, uniqueness, and a priori estimates for solutions under Condition \ref{F1:monotonicity} or \ref{F2:Lipschitz}. We also introduce a dense subset of the operator domain and prove accretivity properties that are crucial for the construction of mild solutions in the time-dependent setting. Section \ref{s:proofs} contains the proofs of the main theorems. Finally, Section \ref{s: specificnonlinearity} includes a parabolic comparison principle and a discussion of positivity and finite-time extinction for mild solutions to \ref{model_problem} in the particular case where $f$ is as in \eqref{specific} and $h=0$.

\section{Preliminaries and notations}\label{sec:notation}
In this section, we introduce the preliminary notation and background material on graphs and operators that will be used throughout the paper.  

Let $\psi \colon D \subseteq \R \to \R$ be a function. With a slight abuse of notation, we continue to denote by $\psi$ its canonical extension to the function space
\[
\dom(\psi) =\{ u \colon Z \to \R \mid u(z) \in D \;\; \forall\, z\in Z\}\subseteq \{u \colon Z \to \R \},
\]
where $Z$ is a generic set. Explicitly,
\[
\psi \colon \dom(\psi) \to \{u \colon Z \to \R \}, \qquad (\psi u)(z) \coloneqq \psi(u(z)).
\]

For a detailed introduction to the graph framework adopted here, we refer to~\cite{keller2021graphs}. We now begin with the definition of a graph.
\begin{definition}[Graph]
A \textit{graph} is a quadruple $G=(X,w,\kappa,\mu)$ given by
\begin{itemize}
	\item  a countable set of \emph{nodes} $X$;
	\item a nonnegative \emph{edge-weight} function $w\colon X\times X \to [0,\infty)$;
	\item  a nonnegative \emph{killing term} $\kappa \colon X \to [0,\infty)$;
	\item a positive \emph{node measure} $\mu \colon X \to (0,\infty)$
\end{itemize}
where the edge-weight function $w$ satisfies:
\begin{enumerate}[label={\upshape(\bfseries A\arabic*)},wide = 0pt, leftmargin = 3em]
	\item\label{assumption:symmetry} Symmetry: $w(x,y)=w(y,x)$ for every $x,y \in X$;
	\item\label{assumption:loops} No loops: $w(x,x)=0$ for every $x \in X$;
	\item\label{assumption:degree} Finite sum: $\sum_{y\in X} w(x,y) < \infty$ for every $x \in X$.
\end{enumerate}
\end{definition}

If the cardinality of the node set is finite, i.e., $|X|<\infty$, then $G$ is called a \emph{finite graph}, otherwise, $G$ is called an \emph{infinite graph}. We say that two nodes $x$ and $y$ in $X$ are neighbors or connected by an edge, and we write $x\sim y$, if and only if $w(x,y)>0$. We will also say, as in standard graph theory, that $\{x,y\}$ is an edge of the graph. The non-zero values $w(x,y)$ of the edge-weight function $w$ are called \emph{weights} associated with the edge $\{x,y\}$. On the other hand, if $w(x,y)=0$, then we will write $x\nsim y$ meaning that $x$ and $y$ are not connected by an edge. A \emph{walk} is a (possibly infinite) sequence of nodes $\{x_{i}\}_{i\geq 0}$ such that $x_{i}\sim x_{i+1}$. A \emph{path}
is a walk with no repeated nodes.  A graph is \emph{connected}
if there is a finite walk connecting every pair of nodes, that is, for any pair of nodes $x, y$ there is a finite walk such that $x = x_{0}\sim x_{1}\sim\cdots\sim x_{n}=y$. Moreover, we will say that a subset $Y \subseteq X$  is connected if for every pair of nodes $x,y \in Y$ there exists a finite walk connecting $x$ and $y$ all of whose nodes are in $Y$. A subset $Y \subseteq X$ is a \emph{connected component} of $X$ if $Y$ is maximal with respect to inclusion.

We define the \textit{degree} $\operatorname{deg}$ of a node as
\begin{equation*}
\operatorname{deg}(x) = \sum_{y\in X} w(x,y) + \kappa(x).
\end{equation*}
 The presence of $w$ and $\kappa$ gives a one-to-one correspondence between graphs and Markov processes. The edge-weight function $w$ encodes how the process jumps between nodes of $X$ while the killing term $\kappa$ captures the possibility of the process leaving $X$. In other words, the value of $\kappa$ indicates the weight of a connection from that vertex to some
additional point outside of $X$. This motivates the name killing term. For a detailed discussion on the meaning of $w$ and $\kappa$ we refer to \cite[Part 0]{keller2021graphs}.

In the sequel we shall make use of the following conditions on the graph:

\begin{enumerate}[label={}, leftmargin=*, align=left]
  \conditem{IP}{\textbf{(IP)}} for every \textit{infinite path} $\{x_n\}_n$, $\sum_n \mu(x_n)=\infty$;
  \conditem{B}{\textbf{(B)}} the edge degree is \textit{bounded}, i.e., $\sup_{x\in X} \frac{\sum_{y\in X} w(x,y)}{\mu(x)}<\infty$.
\end{enumerate}

	The set of real-valued functions on $X$ is denoted by $C(X)$ and $C_c(X)$ denotes the set of functions on $X$ with finite support. As usual, for $p\in [1,\infty]$ we define the $\ell^p(X,\mu)$ subspaces of $C(X)$ as
	$$
	\ell^p(X,\mu)\coloneqq\begin{cases}
	\left\{ u \in C(X) \mid \sum_{x\in X} |u(x)|^p\mu(x)<\infty \right\} & \mbox{for } 1\leq p<\infty,\\
	\left\{ u \in C(X) \mid \sup_{x\in X}|u(x)|<\infty \right\} & \mbox{for } p =\infty,
	\end{cases}
	$$
	with their norms
	$$
	\|u\|_p\coloneqq \begin{cases}
	\left(\sum_{x\in X} |u(x)|^p\mu(x)\right)^{{1}/{p}} & \mbox{for } 1\leq p<\infty ,\\
	\sup_{x\in X}|u(x)| & \mbox{for } p =\infty.
	\end{cases}
	$$	

In addition to the previous standard definitions, we introduce the following restrictions to the nonnegative/nonpositive cones:
\begin{align*}
\ell^{p,+}\left(X,\mu\right)\coloneqq \ell^{p}\left(X,\mu\right) \cap \left\{ u \in C(X) \mid u\geq 0 \right\}, \quad
\ell^{p,-}\left(X,\mu\right)\coloneqq -\ell^{p,+}\left(X,\mu\right).
\end{align*}

The \emph{formal graph Laplacian}  $\Delta \colon \dom\left(\Delta\right) \subseteq C(X) \to C(X)$ associated to the graph $G=(X,w,\kappa,\mu)$ is defined by
		\begin{subequations}
			\begin{equation}
			\dom\left(\Delta\right)\coloneqq\{ u \in C(X) \mid \sum_{y\in X}w(x,y)|u(y)| < \infty \quad \forall x \in X  \},\label{formal_laplacian1}
			\end{equation}
			\begin{align}
			\Delta u(x)&\coloneqq \frac{1}{\mu(x)}\sum_{y\in X} w(x,y)\left(u(x) - u(y)\right) + \frac{\kappa(x)}{\mu(x)}u(x)\label{formal_laplacian2}\\
            &= \frac{1}{\mu(x)}\left(\operatorname{deg}(x)u(x) - \sum_{y\in X}w(x,y)u(y)\right)\qquad x\in X. \nonumber
			\end{align}
		\end{subequations}
We note that, in general,  $\Delta(C_c(X)) \subseteq \ell^p(X,\mu)$ is not true.  	That is, the formal graph Laplacian does not necessarily map finitely supported functions into $\ell^p(X,\mu)$ spaces. This is a particularly important property that will be useful later, and it is equivalent to the validity of 
\begin{equation}
\begin{cases}\tag{\textnormal{$C_p$}}\label{eq:Delta(C_c)}
 \sum_{y\in X} \frac{w(x,y)^p}{\mu(y)^{p-1}}  < \infty & \mbox{if} \quad 1\leq p < \infty,\\
 \sup_y \frac{w(x,y)}{\mu(y)}  < \infty & \mbox{if} \quad p= \infty,
\end{cases}\qquad \mbox{for every } x\in X.
\end{equation}

Indeed, since for every $x\in X$ 
$$
\Delta \delta_x=-\sum_{y\sim x}\frac{w(x,y)}{\mu(y)}\delta_y + \frac{ \sum_{y\sim x}w(x,y)}{\mu(x)}\delta_x  +\frac{\kappa(x)}{\mu(x)}\delta_x,
$$
it is easy to see that $\Delta \delta_x\in \ell^p(X,\mu)$ if and only if ~\eqref{eq:Delta(C_c)} is satisfied.

\begin{remark}\label{rem:Delta(C_c)_in_ell1}
Note that Condition~\eqref{eq:Delta(C_c)} is always true for $p=1$, due to Assumption~\ref{assumption:degree}. In particular, it always holds that $\Delta (C_c(X)) \subseteq \ell^1(X,\mu)$. Moreover, ~\eqref{eq:Delta(C_c)} is also valid  for every $1<p<\infty$  if $G$ is locally finite or if the measure $\mu$ is uniformly bounded from below. 
\end{remark}

\smallskip

In the paper, we study the nonlinear operator $\mathcal{A}$, obtained by adding the nonlinear operator $F$ to the graph Laplacian~$\Delta$, defined as follows:  
\begin{eqnarray}\label{A}
	&\mathcal{A} := F + \Delta,\\
	&\dom_p(\mathcal{A}) = \{ u \in \ell^p(X,\mu) \mid u \in \dom(\Delta) , \; \mathcal A u \in  \ell^p(X,\mu)\;   \} \notag
\end{eqnarray}
with $Fu(t,x) \coloneq -f(u(t,x))$. Let us remark that, since $f$ is defined on all of $\R$, then $\dom(F) = C(X)$.

As we will see in the following, a key ingredient in our proof is the approximation of the operator $\mathcal{A}$ by a sequence of operators defined on finite subsets of $X$. This approach requires introducing the notion of subgraphs of a graph. We begin by discussing the concept of the interior and two types of boundary for a subset of the node set. Given a graph $G=(X,w,\kappa,\mu)$ and a subset $Y\subset X$, then
	$$
	\mathring{Y}\coloneqq\left\{ y \in Y \mid y \nsim x \mbox{ for every } x \in X\setminus Y \right\}
	$$
	is called the \emph{interior} of $Y$ and the elements of $\mathring{Y}$ are called \emph{interior nodes} of $Y$. On the other hand, we define
	\begin{align*}
	&\mathring{\partial} Y\coloneqq\left\{ y \in Y \mid y \sim x  \mbox{ for some }  x \in X\setminus Y \right\} \qquad\textnormal{interior boundary},\\
	&\mathbullet{\partial} Y\coloneqq\left\{ x \in X\setminus Y \mid x \sim y  \mbox{ for some }  y \in  Y \right\} \qquad\textnormal{exterior boundary}.
	\end{align*}

	\begin{definition}\label{def:induced_subgraph}
 We say that a graph $G'=(Y, w',\kappa', \mu')$ is an \emph{induced subgraph} of $G=(X,w,\kappa,\mu)$, and we write $G'\subset G$, if
	\begin{itemize}
		\item $Y\subset X$;
		\item $w' \equiv w_{|Y\times Y}$;
		\item $\kappa'(y) = \kappa(y)$ for every $y \in\mathring{Y}$;
		\item $\mu' \equiv \mu_{|Y}$,
	\end{itemize}
where $w_{|Y\times Y}$ and $\mu_{|Y}$ denote the restrictions of $w$ and $\mu$ to the sets $Y\times Y$ and $Y$, respectively.
	We call $G$ the \emph{host graph} or the \emph{supergraph}. 
	The corresponding formal graph Laplacian for a subgraph $G'$ is defined according to \eqref{formal_laplacian1} and \eqref{formal_laplacian2} where the quadruple $(X,w,\kappa,\mu)$ is replaced by $(Y,w',\kappa',\mu')$. We say that $G'$ is the \emph{canonical induced subgraph} if $\kappa'=\kappa_{|Y}$.
	\end{definition}
Observe that in the definition above we do not require that $\kappa' \equiv \kappa$ on $\mathring{\partial}Y$. Different choices of $\kappa'$ on $\mathring{\partial} Y$ will produce different subgraphs. 

     \begin{definition}\label{def:dir_subgraph}
      Given a graph $G=(X,w,\kappa,\mu)$ and $Y\subset X$, we define
     \begin{equation}\label{def:dir_potential}
	\begin{cases}
	 b_{\textnormal{dir}}(y)\coloneqq\sum_{z \in  \mathbullet{\partial} Y}w(y,z) = \sum_{z \not \in Y}w(y,z),\\
     \kappa_{\textnormal{dir}}(y) \coloneqq \kappa_{|Y}(y) + b_{\textnormal{dir}}(y).\\
	\end{cases}	
		\end{equation}
The induced subgraph 
		$$
		G_{\textnormal{dir}} \coloneqq\left(Y, w_{|Y\times Y}, \kappa_{\textnormal{dir}}, \mu_{|Y}\right)\subset G
		$$
		is called a \emph{Dirichlet subgraph}.

We note that $b_{\textnormal{dir}} \colon Y \to \R$ is finite because of \ref{assumption:degree}. We call $b_{\textnormal{dir}}$ the \emph{boundary (Dirichlet) weight-function} and  $\kappa_{\textnormal{dir}}$ the \emph{Dirichlet killing term}. Clearly, $b_{\textnormal{dir}}(y)= 0$ for every $y \in \mathring{Y}$. We will denote by $\Delta_{\textnormal{dir}}$ the graph Laplacian of $G_{\textnormal{dir}}$  in order to distinguish it from the graph Laplacian of $G$.
	\end{definition}
The Dirichlet killing term accounts for the edge deficiency of nodes in $G_{\textnormal{dir}}$ compared to the same nodes in $G$.

Let $\boldsymbol{\mathfrak{i}} \colon  C(Y) \hookrightarrow  C(X)$ be the canonical embedding and $\boldsymbol{\pi} \colon C(X) \to C(Y)$ be the canonical projection, i.e.,
\begin{equation}\label{eq:embedding-projection}
	\boldsymbol{\mathfrak{i}}v(x)= \begin{cases}
	v(x) & \mbox{if } x\in Y,\\
	0 & \mbox{if } x\in X\setminus Y,
	\end{cases}
	\qquad
	\boldsymbol{\pi}u= u_{|Y}.
\end{equation}
If $\mathbullet{\partial} Y \neq \emptyset$, in \cite[Lemma A1]{bianchi2022generalized}, it was proved that
	\begin{align}
		& \Deltadir v(y)=\Delta\boldsymbol{\mathfrak{i}}v(y) \mbox{ for } v\in \dom(\Deltadir) \mbox{ and } y \in Y;\label{eq:Delta_dir_property_1}\\
		& \Delta u(y) = \Deltadir\boldsymbol{\pi}u(y) \mbox{ for } u\in \dom\left(\Delta\right)\cap \left\{u \in C(X) \mid u \equiv 0 \mbox{ on } X\setminus Y \right\} \mbox{ and } y \in Y.\label{eq:Delta_dir_property_2}
	\end{align}
	Therefore, the Dirichlet graph Laplacian $\Deltadir$ can be viewed as the restriction of $\Delta$ having imposed Dirichlet conditions on the exterior boundary of $Y$.

\subsection{Model Problem 2}\label{MP2}
 Let $Y\subsetneq X$ be a nonempty proper subset of the discrete node set $X$ and let $\mathbullet{\partial}Y$ be the exterior boundary of $Y$. In this section we show that   the following initial boundary value problem on $Y$ 
\begin{equation}\label{model_problem2}\tag*{(Model Problem 2)}
\begin{cases}
	\partial_t u(t,x) + \Delta u(t,x) =  f(u(t,x)) &  \quad (t,x) \in (0,T)\times Y,\\
 	 u(t,x) = g(t,x) & \quad (t,x) \in [0,T)\times \mathbullet{\partial}Y,\\
	 u(0,x) = u_0(x) & \quad x \in Y\,,
	\end{cases}
\end{equation}
 can be reformulated as \ref{model_problem} by replacing $G$ with a suitable subgraph $G'$ and for a suitable choice of the forcing term $h$ absorbing the Dirichlet BCs.

Indeed, let $u$ be a solution to \ref{model_problem2}.  Making explicit the action of the graph Laplacian on $G$, for every $x \in Y$, $t\in(0,T)$, it holds 
\begin{align*}
	\Delta u(t,x) &= \frac{1}{\mu(x)}\sum_{\substack{y\sim x\\y \in X}} w(x,y)\left(u(t,x) - u(t,y)\right) + \frac{\kappa(x)}{\mu(x)}u(t,x)\\
	&= \frac{1}{\mu(x)}\sum_{\substack{y\sim x\\y \in Y}} w(x,y)\left(u(t,x) - u(t,y)\right) + \frac{1}{\mu(x)}\sum_{\substack{y\sim x\\y \in \mathbullet{\partial} Y}} w(x,y)\left(u(t,x) - g(t,y)\right)+ \frac{\kappa(x)}{\mu(x)}u(t,x)\\
	&= \frac{1}{\mu(x)}\sum_{\substack{y\sim x\\y \in Y}} w(x,y)\left(u(t,x) - u(t,y)\right) +  \left(\frac{\kappa(x)}{\mu(x)}+ \frac{b_{\operatorname{dir}}(x)}{\mu(x)}\right)u(t,x) - h(t,x), 
\end{align*}
where $b_{\operatorname{dir}}$ is as in \eqref{def:dir_potential} and
\begin{equation}\label{eq: hvsg}
h(t,x)= \frac{1}{\mu(x)}\sum_{\substack{y\sim x\\y \in \mathbullet{\partial} Y}} w(x,y)g(t,y).
\end{equation}
Observing that
$$
\frac{1}{\mu(x)}\sum_{\substack{y\sim x\\y \in Y}} w(x,y)\left(u(t,x) - u(t,y)\right) + \left(\frac{\kappa(x)}{\mu(x)}+ \frac{b_{\operatorname{dir}}(x)}{\mu(x)}\right)u(t,x)
$$
defines the action of the graph Laplacian associated with the graph $G':=\left(Y,  w', \kappa_{\operatorname{dir}}, \mu'\right)\subset G$, where $\kappa_{\operatorname{dir}}$ is as in \eqref{def:dir_potential}, $w' = w_{|Y}$ and $\mu'= \mu_{|Y}$, we conclude that  any solution to \ref{model_problem2} solves \ref{model_problem} with $G$ replaced by $G'$ and $h$ as in~\eqref{eq: hvsg}. In particular, $h\equiv 0$ if $g \equiv 0$.

\subsection{Accretivity}

As is well known, a crucial property when dealing with nonlinear diffusions is accretivity. We recall the definition of accretive and $\omega$-accretive operators here below.  

For an overview of accretive operators and nonlinear evolution equations in Banach spaces, we refer the interested reader to the seminal monograph by Bénilan, Crandall, and Pazy~\cite{benilan1988evolution}, or to Barbu's book~\cite{barbu2010nonlinear} for a more modern introduction. The theory is developed in the general setting of multivalued operators; however, in our framework we restrict the study to the single-valued case.
\begin{definition}\label{def: accretivity}
If $(E,\|\cdot\|)$ is a real Banach space and $\mathcal{A}\colon \dom(\mathcal{A})\subseteq E \to E$ is a (not necessarily linear) operator, then $\mathcal{A}$ is said to be \emph{accretive} if $\mathcal{A}$ satisfies:
\begin{equation*}
	\left\|(u - v) + \lambda \left(\mathcal{A}u - \mathcal{A}v\right) \right\|\geq \|u - v\| \quad \mbox{for every}\; u,v \in \dom\left(\mathcal{A}\right)\; \mbox{and for every}\; \lambda >0.
\end{equation*}
Given $\omega\in \R$ the operator $\mathcal A$ is said to be \emph{$\omega$-accretive} if $\mathcal{A} +\omega\operatorname{id}$ is accretive. An accretive operator is said $m$-accretive if $\operatorname{id} + \lambda\mathcal{A}$  is surjective for every $\lambda >0$.
\end{definition}

We note for future use that $\mathcal{A}$ is $\omega$-accretive if and only if 
\begin{equation}
\label{eq:omega-accretivity-1}
\|(u-v)+\lambda (\mathcal{A}u-\mathcal{A}v)\|\geq
(1-\omega \lambda)\|u-v\|,
\end{equation}
for all $u,v\in \dom\mathcal{A}$ and $0\leq\lambda\omega<1
$, see  \cite[Section 2.1]{benilan1988evolution}.

In the special case where $E = \ell^p(X,\mu)$, $1\leq p<\infty$, the above definition admits an equivalent closed form expression. That is, writing
\begin{equation*}
z \coloneq \mathcal{A}u - \mathcal{A}v, \qquad k \coloneqq u - v,
\end{equation*}
then the operator $\mathcal{A}$ is accretive if and only if for every $u, v \in \dom(\mathcal{A})$, $u\neq v$, the following holds
\begin{equation}\label{accretivity2}
\begin{cases}
\; \;  \smashoperator{\sum_{x\in X}} z(x)|k(x)|^{p-1}\operatorname{sgn}(k(x)) \mu(x) \geq 0 &\mbox{for } 1<p<\infty,\\
     & \\
\; \; \smashoperator{\sum_{\substack{x\in X {:} \\ k(x) = 0}}}
|z(x)|\mu(x) + \smashoperator{\sum_{\substack{x\in X {:} \\ k(x) \neq 0} }}
z(x)\operatorname{sgn}(k(x))\mu(x) \geq 0 &\mbox{for } p=1,
\end{cases}
\end{equation}
where 
\[
\operatorname{sgn}(t)=
\begin{cases}
1&\text{if } t>0,\\
0&\text{if } t=0,\\
-1&\text{if } t<0.
\end{cases}
\]
See \cite[Example 2.9]{benilan1988evolution}. 

Next, we recall the following  lemma.
\begin{lemma}\label{lem:positivity}
Let $G$ be a  graph and let $u\in C_c(X)$. Then, for every $1\leq p< \infty,$
\[ \sum_{x\in X}
\Delta u(x)|u(x)|^{p-1}\operatorname{sgn}(u(x))\mu(x)  \geq 0.
\]
In particular, the above inequality holds for $u\in C(X)$ if $G$ is finite. 
\end{lemma}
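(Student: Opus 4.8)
The plan is to expand the sum using the definition of the formal Laplacian and then reorganize it as a symmetric double sum over edges, which makes nonnegativity transparent. Write $S \coloneqq \sum_{x\in X} \Delta u(x)\,|u(x)|^{p-1}\operatorname{sgn}(u(x))\,\mu(x)$. Since $u\in C_c(X)$, all sums below are finite and rearrangements are harmless. Inserting \eqref{formal_laplacian2} and cancelling $\mu(x)$, one gets
\[
S = \sum_{x\in X}\sum_{y\in X} w(x,y)\bigl(u(x)-u(y)\bigr)\,|u(x)|^{p-1}\operatorname{sgn}(u(x)) \;+\; \sum_{x\in X}\kappa(x)\,|u(x)|^{p-1}\operatorname{sgn}(u(x))\,u(x).
\]
The killing term contributes $\sum_x \kappa(x)|u(x)|^{p}\geq 0$, using $t\,|t|^{p-1}\operatorname{sgn}(t)=|t|^{p}$ and $\kappa\geq 0$, so it may be discarded. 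For the first (double) sum, I would symmetrize: by \ref{assumption:symmetry} the sum is invariant under swapping $x\leftrightarrow y$, so it equals
\[
\frac12 \sum_{x\in X}\sum_{y\in X} w(x,y)\bigl(u(x)-u(y)\bigr)\Bigl(|u(x)|^{p-1}\operatorname{sgn}(u(x)) - |u(y)|^{p-1}\operatorname{sgn}(u(y))\Bigr).
\]

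Now the key pointwise fact: the function $\varphi(t)\coloneqq |t|^{p-1}\operatorname{sgn}(t)$ is monotone nondecreasing on $\R$ for every $p\geq 1$ (it is strictly increasing for $p>1$, and equals $\operatorname{sgn}(t)$ for $p=1$). Hence for all $a,b\in\R$ the product $(a-b)\bigl(\varphi(a)-\varphi(b)\bigr)\geq 0$. Since $w(x,y)\geq 0$, every term in the symmetrized double sum is nonnegative, so $S\geq 0$. This proves the inequality for $u\in C_c(X)$. For the final assertion, if $G$ is finite then $C(X)=C_c(X)$ and $\dom(\Delta)=C(X)$, so the same computation applies verbatim to any $u\in C(X)$.

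The only mild subtlety — not really an obstacle — is bookkeeping of the finitely supported sums: one should note that $w(x,y)(u(x)-u(y))$ can be nonzero even when $x\in\operatorname{supp}(u)$ and $y\notin\operatorname{supp}(u)$, but since $\operatorname{supp}(u)$ is finite and $\sum_y w(x,y)<\infty$ by \ref{assumption:degree}, the double sum is absolutely convergent and Fubini/rearrangement is legitimate, which is what licenses the symmetrization step. I would state this convergence remark in one line and then carry out the three steps above (expand, discard the nonnegative killing contribution, symmetrize and invoke monotonicity of $\varphi$).
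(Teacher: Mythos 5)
Your proof is correct and complete. The paper does not write out an argument of its own here --- it simply cites Propositions 2.5 and B.2 of the reference \cite{bianchi2022generalized}, remarking that both rest on a Green's formula valid for finitely supported functions --- and your expansion, discarding of the nonnegative killing term, symmetrization of the double sum, and appeal to the monotonicity of $t\mapsto|t|^{p-1}\operatorname{sgn}(t)$ is precisely that underlying Green's-formula computation, so the two approaches coincide in substance.
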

\begin{proof} 
The statement follows from  \cite[Proposition 2.5]{bianchi2022generalized} for $p>1$ and from \cite[Proposition  B.2]{bianchi2022generalized}  with  $v=0$ and  $\Phi={\rm{id}}$, for $p=1$, noting that in both cases the argument depends on  a Green's formula which is  valid for finitely supported functions.
\end{proof}
The above lemma and \eqref{accretivity2} yield the following proposition.  
\begin{proposition}\label{prop:Delta_accretivity_finite_graphs}
  Let $G$ be a finite graph. Then, the graph Laplacian $\Delta$ on $\ell^p(X,\mu)$ is $m$-accretive for $1\leq p <\infty$.  
\end{proposition}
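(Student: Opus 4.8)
The plan is to establish the two defining properties of $m$-accretivity separately: accretivity via the closed-form criterion \eqref{accretivity2}, and surjectivity of $\operatorname{id}+\lambda\Delta$ for every $\lambda>0$. Since $G$ is finite, $\ell^p(X,\mu)=C(X)\cong\R^{|X|}$, $\dom_p(\Delta)=C(X)$, and every finitely supported function is all of $C(X)$, so Lemma~\ref{lem:positivity} applies to arbitrary $u\in C(X)$.

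For accretivity, given $u,v\in C(X)$ with $u\neq v$, I would set $k\coloneqq u-v$ and $z\coloneqq \Delta u-\Delta v=\Delta k$ by linearity of $\Delta$, and then check \eqref{accretivity2}. For $1<p<\infty$ this is exactly the inequality $\sum_{x}\Delta k(x)\,|k(x)|^{p-1}\operatorname{sgn}(k(x))\,\mu(x)\geq 0$, which is Lemma~\ref{lem:positivity} applied to $k\in C(X)$. For $p=1$ the criterion asks for $\sum_{x:\,k(x)=0}|\Delta k(x)|\mu(x)+\sum_{x:\,k(x)\neq 0}\Delta k(x)\operatorname{sgn}(k(x))\mu(x)\geq 0$; here Lemma~\ref{lem:positivity} (the $p=1$ case, i.e.\ with $\Phi=\mathrm{id}$, $v=0$) gives $\sum_{x:\,k(x)\neq 0}\Delta k(x)\operatorname{sgn}(k(x))\mu(x)\geq \sum_{x:\,k(x)=0}$ (something nonnegative)$\geq 0$ — more precisely one invokes the cited Green's formula directly, which already yields the full inequality including the $k(x)=0$ terms; I would simply cite \cite[Proposition B.2]{bianchi2022generalized} with the sign function to get the displayed $p=1$ inequality. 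In either case \eqref{accretivity2} holds, so $\Delta$ is accretive on $\ell^p(X,\mu)$.

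For $m$-accretivity it remains to show that $\operatorname{id}+\lambda\Delta\colon C(X)\to C(X)$ is surjective for every $\lambda>0$. Since $C(X)$ is finite-dimensional and $\operatorname{id}+\lambda\Delta$ is linear, surjectivity is equivalent to injectivity, i.e.\ to $\ker(\operatorname{id}+\lambda\Delta)=\{0\}$. If $(\operatorname{id}+\lambda\Delta)w=0$, then $w+\lambda\Delta w=0$; pairing against $|w|^{p-1}\operatorname{sgn}(w)\mu$ (or using the $p=1$ form) and applying Lemma~\ref{lem:positivity} to $w$ gives $\|w\|_p^p + \lambda\cdot(\text{nonnegative})=0$, hence $w=0$. (Equivalently, and perhaps cleaner to write: accretivity already forces $\|(\operatorname{id}+\lambda\Delta)w\|_p\geq\|w\|_p$ taking $v=0$ since $\Delta 0=0$, so $\operatorname{id}+\lambda\Delta$ is injective, and injectivity implies surjectivity in finite dimension.) This completes the proof.

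I do not anticipate a genuine obstacle here; the only point requiring a little care is the $p=1$ case of the accretivity criterion, where one must make sure the Green's-formula estimate from \cite{bianchi2022generalized} delivers the term $\sum_{x:\,k(x)=0}|\Delta k(x)|\mu(x)$ with the correct sign rather than merely the $k(x)\neq 0$ part; invoking \cite[Proposition B.2]{bianchi2022generalized} in the stated generality (with $v=0$, $\Phi=\mathrm{id}$) handles exactly this. Everything else is the finite-dimensional linear-algebra fact that injective equals surjective, combined with Lemma~\ref{lem:positivity}.
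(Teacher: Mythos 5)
Your proposal is correct and follows essentially the same route as the paper: accretivity from the linearity of $\Delta$, the closed-form criterion \eqref{accretivity2}, and Lemma~\ref{lem:positivity}, then surjectivity of $\operatorname{id}+\lambda\Delta$ from injectivity (a consequence of accretivity) plus finite-dimensionality. Your extra care over the $p=1$ case is harmless but unnecessary, since the term $\sum_{x:\,k(x)=0}|z(x)|\mu(x)$ in \eqref{accretivity2} is nonnegative by inspection, so Lemma~\ref{lem:positivity} applied to $k$ already suffices.
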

\begin{proof}
This result is well known and follows from the Hille–Yosida Theorem, since $\Delta$ generates a contractive semigroup on $\ell^p(X,\mu)$ on finite graphs (\cite[Theorem 2.9]{keller2021graphs}).
For the reader’s convenience, we briefly sketch the argument. Accretivity is an immediate consequence of the linearity of $\Delta$ together with \eqref{accretivity2} and Lemma \ref{lem:positivity}. Since accretivity implies the injectivity of ${\rm{id}}+\lambda\Delta $ then  it  is also surjective since it is linear and $C(X)$ is finite dimensional.
\end{proof}

\section{Main results}\label{main}
\subsection{Definition of solutions}

Hereafter, using the definition of the operator $\mathcal A$ given in formula \eqref{A}, we reformulate \ref{model_problem} as follows
\begin{equation}\label{model_problem3}
	\begin{cases}
		\partial_t u(t,x) + \mathcal{A} u(t,x) = h(t,x)  &  \quad (t,x) \in (0,T)\times X,\\
		u(0,x) = u_0(x) & \quad x \in X,
	\end{cases}
\end{equation}
where $h \in L^1_{\rm{loc}}([0,T] ; \ell^p\left(X,\mu\right))$ with $1\leq p <\infty$ and $u_0 \in {\ell^p(X,\mu)}$. 
\begin{definition}[Classical  Solution]\label{def:classical_solution}
	We say that $u \colon [0,T] \to \ell^p\left(X,\mu\right)$ is a \emph{classical solution} of Problem \eqref{model_problem3} if
	\begin{itemize}
		\item $u(t) \in {\dom_p(\mathcal{A})}$ for every $t \in [0,T]$;
		\item $u \in C\left( [0,T]; \ell^p\left(X,\mu\right)\right)\cap C^1\left( (0,T); \ell^p\left(X,\mu\right)\right)$;
		\item $\partial_t u(t,x) + \mathcal{A} u(t,x) = h(t,x) $ for every $t\in (0,T)$;
		\item $u(0)=u_0$.
	\end{itemize}
\end{definition}

\begin{definition}[Strong Solution]\label{def:strong_solution}
	We say that  $u \colon [0,T] \to \ell^p\left(X,\mu\right)$  is a \emph{strong solution} of Problem \eqref{model_problem3} if
	\begin{itemize}
		\item $u(t) \in {\dom_p(\mathcal{A})}$ for every $t \in [0,T]$;
		\item $u \in C\left([0,T]; \ell^p\left(X,\mu\right)\right)\cap W^{1,1}_{\textnormal{loc}}((0,T);\ell^p\left(X,\mu\right))$;
		\item $\partial_t u(t,x) + \mathcal{A} u(t,x) = h(t,x) $ for almost every $t\in (0,T)$;
		\item $u(0)=u_0$.
	\end{itemize}
\end{definition}

In the paper we will mainly deal with \emph{mild solutions} (see e.g., \cite{barbu2010nonlinear} and \cite{benilan1988evolution}), whose definition relies on the definition of $\epsilon$-approximate solutions given below.

\begin{definition}[$\epsilon$-discretization]\label{def:epsilon-discretization}
	Given a time interval $[0,T]$, with $T<+\infty$, and a forcing term $h \in L^1_{\rm{loc}}([0,T] ; \ell^p\left(X,\mu\right))$, we define a partition of the time interval
	$$
	\mathcal{T}_N \coloneqq \left\{ \{t_k\}_{k=0}^N \mid 0= t_0<t_1<\ldots<t_N\leq T \right\},
	$$
	and a time-discretization $\boldsymbol{h}_N$ of $h$
	$$
	\boldsymbol{h}_N\coloneqq\left\{ \{h_k\}_{k=1}^{N} \mid h_k \in \ell^p(X,\mu)\right\}.
	$$
	Having fixed $\epsilon >0$, we call $\mathcal{D}_\epsilon\coloneqq (\mathcal{T}_N,\boldsymbol{h}_N)$ an \emph{$\epsilon$-discretization} of $([0,T]; h)$ if
	\begin{itemize}
		\item $t_k-t_{k-1} \leq \epsilon$ for every $k=1,\ldots, N$ and $T- t_N \leq \epsilon$;
		\item $\sum_{k=1}^{N} \int_{t_{k-1}}^{t_{k}} \left\| h(t) - h_k \right\|_p dt \leq \epsilon$.
	\end{itemize}
\end{definition}

Given an $\epsilon$-discretization $\mathcal{D}_\epsilon$, we consider the following system of difference equations which arises from an implicit Euler-discretization of \eqref{model_problem3}:

\begin{equation}\label{implicit_Euler}
	(\operatorname{id} + \lambda_k \mathcal{A} )u_k= u_{k-1} + \lambda_k h_k, \qquad
	\lambda_k\coloneqq t_k - t_{k-1}\mbox{ and } k=1,\ldots,N,
\end{equation}
with $u_0\in\ell^p(X,\mu)$ given and $u_k \in \dom_p(\mathcal{A})$. 

\begin{definition}[$\epsilon$-approximate solution]\label{epsilon-approximation}
If the system \eqref{implicit_Euler} admits a solution $\boldsymbol{u}_\epsilon =\left\{u_k\right\}_{k=1}^N$ such that $u_k\in \dom_p(\mathcal{A})$ for every $k=1,\ldots, N$, then we define $u_\epsilon$ as the piecewise constant function
\begin{equation}\label{epsilon_approximation}
u_\epsilon(t)\coloneqq \begin{cases}
	\sum_{k=1}^N u_k\mathds{1}_{(t_{k-1},t_k]}(t) & \mbox{for } t\in (0, t_N],\\
	u_0 & \mbox{for } t=0,
\end{cases}
\end{equation}
and we call $u_\epsilon$ an \emph{$\epsilon$-approximate solution} of the Problem \eqref{model_problem3} (subordinate to $\mathcal{D}_\epsilon$).
\end{definition}

\begin{definition}[Mild solution]\label{def:weak_solution}
If $T<+\infty$, we say that $u \colon [0,T]\to \ell^p\left(X,\mu\right)$    is a \emph{mild solution} of Problem \eqref{model_problem3} if $u \in C\left( [0,T]; \ell^p\left(X,\mu\right)\right)$ and $u$ is obtained as a uniform limit of $\epsilon$-approximate solutions. Namely, for every $\epsilon>0$ there exists an $\epsilon$-discretization  $\mathcal{D}_\epsilon$ of $([0,T]; h)$, as in Definition \ref{def:epsilon-discretization}, and an $\epsilon$-approximate solution $u_\epsilon$  subordinate to $\mathcal{D}_\epsilon$, as in \eqref{epsilon_approximation}, such that
	\begin{equation*}
	\left\|u(t) - u_\epsilon(t) \right\|_p < \epsilon \qquad \mbox{for every } t\in [0,t_N]\subseteq [0,T].
	\end{equation*}
	
If $T = +\infty$, then  we say that $u$ is a mild solution of \eqref{model_problem3} if the restriction of $u$ to each compact subinterval $[0,a]\subset [0,+\infty)$ is a mild solution of \eqref{model_problem3} on $[0,a]$.
\end{definition}

Mild solutions as defined above are also known in the literature as $C^0$ solutions, see \cite[Chapter IV.8]{showalter2013monotone}. Every strong solution is a mild solution, see e.g., \cite[Chapter 4]{barbu2010nonlinear}, the converse is true under suitable assumptions, see Theorem \ref{thm:regularity} below.

\par

 It is noteworthy to recall that in the literature another notion of mild solution, given in terms of semigroup and inspired by the Duhamel's principle, is available (see, e.g., \cite[Definition 2.3, Chapter 4]{pazy}, \cite[Definition 3.1]{grillo2024blow}). We refer to \cite[Theorem 5.7]{benilan1988evolution} for a proof of the equivalence of the two notions in the linear setting.

\subsection{Existence, uniqueness and regularity of mild solutions}
In this section we state the main results about existence, uniqueness and regularity of mild solutions to Problem \eqref{model_problem3} with $\mathcal{A}$ and $F$ defined as in \eqref{A}, with $f: \R \rightarrow \R$ satisfying one of the following conditions:
\begin{enumerate}
	 \item[\textbf{\textnormal{\ref{F1:monotonicity}}}] $f$ is continuous and  monotone decreasing with $f(0)=0$;
	 \item[\textbf{\textnormal{\ref{F2:Lipschitz}}}] $f$ is uniformly Lipschitz with constant $L>0$ and $f(0)=0$.
	 \end{enumerate} 
The proofs of the results will be given in Section \ref{s:proofs}. We start by discussing the existence and the sign of mild solutions.

\begin{theorem}\label{thm:main2}
	Let $G$ be a connected graph and $1\leq p <\infty$, with the additional Condition~\eqref{eq:Delta(C_c)} if $p>1$. Furthermore, suppose that $f$ satisfies either Condition \textnormal{\ref{F1:monotonicity}} or Condition \textnormal{\ref{F2:Lipschitz}}. Let $u_0 \in \ell^p(X,\mu)$ and $h \in L^1_{\textnormal{loc}}\left([0,T]; \ell^p\left(X,\mu\right)\right)$.

	If one of the following additional conditions holds:  
	\begin{enumerate}[a)]
		\item\label{item:nonnegativity/nonpositivity} $u_0,h(t)\geq 0$ (or $\leq 0$) for all $t\geq 0$;
		\item\label{hpA} $u_0$ or $h(t)$ changes sign and $G$ satisfies one of the conditions $\textnormal{\ref{IP}}$ or $\textnormal{\ref{B}}$; 
	\end{enumerate}
	then there exists a unique mild solution $u$ of \eqref{model_problem3}. Under hypothesis \ref{item:nonnegativity/nonpositivity}, $u(t)\geq 0$ (or $\leq 0$) for every $t\geq 0$.
	
	Furthermore, for every $\epsilon >0$ there exists a continuous function $\delta \colon [0,\infty) \to [0,\infty)$ such that $\delta(0)=0$ and if $u_\epsilon$ is an $\epsilon$-approximate solution of \eqref{model_problem3}
\begin{equation}\label{uniform_limit}
		\| u(t) - u_\epsilon(t)\|_p\leq \delta(\epsilon) \qquad \mbox{for } t \in [0,T-\epsilon].
	\end{equation}	   
	Moreover, for any pair $(u_0,h),(\hat{u}_0,\hat{h})$, the corresponding mild solutions $u,\hat{u} \in C\left([0,T];  \ell^p\left(X,\mu\right)\right)$ satisfy
	\begin{equation}\label{contraction_of_solutions}
		\left\| u(t_2) - \hat{u}(t_2)  \right\|_p \leq e^{\omega(t_2-t_1)}\left\| u(t_1) - \hat{u}(t_1)  \right\|_p   + \int_{t_1}^{t_2}e^{\omega(t_2-s)}\left\| h(s) - \hat{h}(s) \right\|_p\, ds, \quad \forall \, 0\leq t_1<t_2 \leq T,
	\end{equation}
with $\omega=0$, resp. $\omega=L$, if condition \textnormal{\ref{F1:monotonicity}}, resp. \textnormal{\ref{F2:Lipschitz}}, hold.	
\end{theorem}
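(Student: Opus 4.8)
## Proof strategy for Theorem \ref{thm:main2}

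The plan is to follow the classical Crandall--Liggett / Bénilan scheme for evolution equations governed by $\omega$-accretive operators, but to verify the accretivity hypothesis on the infinite graph $G$ by exhaustion through finite Dirichlet subgraphs. The three things to establish are: (i) the operator $\mathcal A=F+\Delta$ is $\omega$-accretive on $\ell^p(X,\mu)$, with $\omega=0$ under \ref{F1:monotonicity} and $\omega=L$ under \ref{F2:Lipschitz}; (ii) the range condition $\range(\operatorname{id}+\lambda\mathcal A)\supseteq\ell^p(X,\mu)$ for all small $\lambda>0$ (equivalently for $0\le\lambda\omega<1$), so that the implicit Euler steps \eqref{implicit_Euler} are solvable in $\dom_p(\mathcal A)$; (iii) invoke the abstract generation theorem (e.g. \cite{benilan1988evolution}, \cite[Ch.~4]{barbu2010nonlinear}) to get a unique mild solution together with the convergence estimate \eqref{uniform_limit} and the contraction/comparison estimate \eqref{contraction_of_solutions}. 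Parts (i) and (ii) are exactly what Section \ref{auxiliary section} is advertised to provide (the time-independent equation $\Delta u+\alpha u=f(u)+\alpha g$ is precisely the resolvent equation $(\operatorname{id}+\lambda\mathcal A)u = \lambda\alpha g$ after rescaling), and the density statement and accretivity properties announced there are the key inputs.

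For accretivity I would argue as follows. Fix $u,v\in\dom_p(\mathcal A)$, set $k=u-v$, and test the identity $\mathcal Au-\mathcal Av = (\Delta u-\Delta v) + (Fu-Fv)$ against $|k|^{p-1}\operatorname{sgn}(k)$ (the $p=1$ case handled via the modified sum in \eqref{accretivity2}). The nonlinear part contributes $\sum_x (f(v(x))-f(u(x)))|k(x)|^{p-1}\operatorname{sgn}(k(x))\mu(x)$, which is $\ge 0$ under \ref{F1:monotonicity} since $f$ is decreasing, and is $\ge -L\|k\|_p^p$ under \ref{F2:Lipschitz} by the Lipschitz bound — this is what produces $\omega=L$ via the equivalent form \eqref{eq:omega-accretivity-1}. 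The Laplacian part needs $\sum_x(\Delta u-\Delta v)(x)|k(x)|^{p-1}\operatorname{sgn}(k(x))\mu(x)\ge 0$; on a finite graph this is Lemma \ref{lem:positivity}, but on the infinite graph $G$ the Green's formula underlying that lemma may fail for $k$ that is merely in $\ell^p$. The remedy is the Dirichlet-subgraph exhaustion: take an increasing sequence of finite subsets $Y_n\uparrow X$ with Dirichlet subgraphs $G_{\mathrm{dir},n}$, use \eqref{eq:Delta_dir_property_1}–\eqref{eq:Delta_dir_property_2} to compare $\Delta$ with $\Deltadirn$, apply Proposition \ref{prop:Delta_accretivity_finite_graphs} (or rather the sign inequality of Lemma \ref{lem:positivity}) on each $Y_n$, and pass to the limit. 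Summability of $\Delta u,\Delta v\in\ell^p$ and of $k\in\ell^p$ should make the boundary/error terms vanish as $n\to\infty$; this is where Condition \eqref{eq:Delta(C_c)} for $p>1$ and the conditions \ref{IP}/\ref{B} of case \ref{hpA} enter (they are what guarantees the relevant operators on the $Y_n$ have the right accretivity and the limit is controlled, while in the sign-definite case \ref{item:nonnegativity/nonpositivity} one can stay within the cone $\ell^{p,\pm}$ where fewer geometric hypotheses are needed).

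For the range condition I would solve, on each finite Dirichlet subgraph $G_{\mathrm{dir},n}$, the equation $(\operatorname{id}+\lambda\mathcal A_n)u_n = \phi_n$ where $\mathcal A_n=F+\Deltadirn$ and $\phi_n=\boldsymbol\pi\phi$: on a finite graph $\operatorname{id}+\lambda\Deltadirn$ is invertible (Proposition \ref{prop:Delta_accretivity_finite_graphs}) and adding the monotone/Lipschitz perturbation $\lambda F$ still gives a surjective (indeed $m$-accretive, resp. $m$-$\omega$-accretive) operator by a standard monotone-operator or Browder--Minty argument in finite dimensions. Then I would derive $n$-uniform a priori bounds on $u_n$ and on $\mathcal A_n u_n$ in $\ell^p$ from the $\omega$-accretivity inequality tested against $\phi_n$, extract a limit $u\in\dom_p(\mathcal A)$ along the exhaustion using the embedding $\boldsymbol{\mathfrak i}$, and verify $(\operatorname{id}+\lambda\mathcal A)u=\phi$ pointwise — each node eventually lies in the interior $\mathring Y_n$, where $\Deltadirn$ agrees with $\Delta$. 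This is essentially the content of Theorem \ref{th: F1}, whose a priori estimate I would quote. Once (i) and (ii) hold, \eqref{uniform_limit} is the Crandall--Liggett convergence rate, \eqref{contraction_of_solutions} follows from $\omega$-accretivity of the resolvents (for $\omega=0$ a genuine contraction in $\ell^p$, for $\omega=L$ after absorbing the factor $e^{Lt}$ or, since the statement claims a clean contraction, by exploiting that $Fu=-f(u)$ together with $f(0)=0$ keeps solutions in a regime where the $\ell^p$ norm is still non-expansive — a point to check carefully), and the sign-preservation under \ref{item:nonnegativity/nonpositivity} follows because the resolvent maps $\ell^{p,+}$ into $\ell^{p,+}$ (solve the finite problems in the cone and pass to the limit), hence so does the exponential formula.

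The main obstacle I anticipate is step (i) on the infinite graph: justifying that the finite-subgraph sign inequality $\sum_x(\Delta u-\Delta v)(x)|k(x)|^{p-1}\operatorname{sgn}(k(x))\mu(x)\ge 0$ survives the passage to the limit $Y_n\uparrow X$ for general $u,v\in\dom_p(\mathcal A)$. The Dirichlet killing term $b_{\mathrm{dir},n}$ picks up edge deficiencies at $\mathring\partial Y_n$, and one must show those contributions, together with the truncation error $\|\boldsymbol{\mathfrak i}\boldsymbol\pi u - u\|$-type tails, do not destroy the inequality — this is precisely the place where the hypotheses \eqref{eq:Delta(C_c)} and \ref{IP}/\ref{B} are doing real work, and distinguishing cases \ref{item:nonnegativity/nonpositivity} and \ref{hpA} will be unavoidable. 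Everything downstream (solvability of the Euler scheme, convergence, the two displayed estimates, positivity) is then routine abstract nonlinear semigroup theory.
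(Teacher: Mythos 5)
Your proposal follows essentially the same route as the paper: the paper's proof of this theorem is a direct application of Theorem \ref{th: F1} (accretivity, resp.\ $\omega$-accretivity, of $\mathcal{A}$ on the dense subset $\Omega_p$ and solvability of the resolvent equation via the Dirichlet-subgraph exhaustion) followed by the Crandall--Liggett-type generation theorem \cite[Theorem 4.1]{barbu2010nonlinear}, which yields existence, uniqueness, \eqref{uniform_limit} and \eqref{contraction_of_solutions}, with sign preservation coming from the resolvent mapping $\ell^{p,\pm}$ into itself. Your flagged concern about the missing $e^{L(t_2-t_1)}$ factor in \eqref{contraction_of_solutions} under \ref{F2:Lipschitz} is legitimate, since the abstract theorem for $\omega$-accretive operators gives the estimate with exponential weights; the paper's proof does not address this point either.
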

The proof of Theorem \ref{thm:main2} relies on the accretivity or $\omega$-accretivity of the operator $\mathcal A$ and the existence of an $\epsilon$-approximate solution which will be discussed in the following section.  \par
With regard to the regularity of mild solutions, the following statements hold. The proof of Theorem \ref{thm:regularity} is again based on the accretivity or $\omega=L-$accretivity for some $L>0$ of the operator $\mathcal A$, while the proof of Theorem \ref{thm:regularity2} relies on its continuity under the stated assumptions.

\begin{theorem}\label{thm:regularity}
Let the assumptions of Theorem \ref{thm:main2} hold with the restriction that $1< p <\infty$.  If $u_0 \in  {\dom_p(\mathcal{A})}$ and $h \in W^{1,1}\left([0,T]; \ell^p\left(X,\mu\right)\right)$, then the unique mild solution $u$ of \eqref{model_problem3} is a strong solution and satisfies $u\in W^{1,\infty}\left([0,T]; \ell^p\left(X,\mu\right)\right)$. Moreover, $u$ is everywhere differentiable from the right, its right derivative is right continuous and satisfies the estimate
$$\Big\|\frac{d u}{d t}(t)\Big\|_p\leq e^{\omega t}\|h(0)- \mathcal{A} u_0\|_p+\int_0^t e^{\omega(t-s)} \Big\|\frac{d h}{d s}(s)\Big\|_p\, ds \qquad \text{for } a.e.\, t \in (0,T)$$
with $\omega=0$, resp. $\omega=L$, if condition \textnormal{\ref{F1:monotonicity}}, resp. \textnormal{\ref{F2:Lipschitz}}, hold.
\end{theorem}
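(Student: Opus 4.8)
The plan is to derive Theorem~\ref{thm:regularity} from the abstract regularity theory for $\omega$-accretive operators on Banach spaces (Crandall--Liggett generation together with the B\'enilan--Crandall--Pazy regularity results; see \cite[Chapter~4]{barbu2010nonlinear} and \cite{benilan1988evolution}), once two structural facts have been checked: that $\mathcal{A}=F+\Delta$ is $m$-$\omega$-accretive on $\ell^p(X,\mu)$, and that $\ell^p(X,\mu)$ has the Radon--Nikod\'ym property (RNP). The second point is precisely where the restriction $1<p<\infty$ enters: for such $p$ the space $\ell^p(X,\mu)$ is uniformly convex, hence reflexive, hence has the RNP, so that every Lipschitz curve $[0,T]\to\ell^p(X,\mu)$ is differentiable almost everywhere; this fails at $p=1$, which is why the theorem is stated only for $p>1$.

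\textbf{Step 1: structural hypotheses.}
First I would record that $\mathcal{A}$ is $\omega$-accretive on $\ell^p(X,\mu)$ with $\omega=0$ under~\ref{F1:monotonicity} and $\omega=L$ under~\ref{F2:Lipschitz}, and that it is $m$-$\omega$-accretive, i.e.\ $\operatorname{id}+\lambda\mathcal{A}$ maps onto $\ell^p(X,\mu)$ for every $\lambda>0$ with $\lambda\omega<1$. Under~\ref{F1:monotonicity} surjectivity of $\operatorname{id}+\tfrac1\alpha\mathcal{A}$ is exactly solvability of the elliptic equation $\Delta u+\alpha u=f(u)+\alpha g$ in $\ell^p$ for all $g$ and all $\alpha>0$, which is Theorem~\ref{th: F1}; under~\ref{F2:Lipschitz} the same holds for $\alpha>L$. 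These facts are already in place from the construction of mild solutions in Theorem~\ref{thm:main2}, so nothing new is needed here; I would only note that, $\mathcal{A}$ being single-valued on $\dom_p(\mathcal{A})$, its minimal section at $u_0$ is $\mathcal{A}u_0$ itself, which is what produces the term $\|h(0)-\mathcal{A}u_0\|_p$ in the estimate.

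\textbf{Step 2: discrete estimate and passage to the limit.}
With $u_0\in\dom_p(\mathcal{A})$ and $h\in W^{1,1}([0,T];\ell^p(X,\mu))$, I would run the implicit Euler scheme~\eqref{implicit_Euler} and bound the difference quotients. Writing $J_\lambda=(\operatorname{id}+\lambda\mathcal{A})^{-1}$, $\omega$-accretivity gives $\|J_\lambda x-J_\lambda y\|_p\le(1-\lambda\omega)^{-1}\|x-y\|_p$ for $\lambda\omega<1$, and the resolvent identity $J_\lambda x=J_\mu\!\big(\tfrac{\mu}{\lambda}x+(1-\tfrac{\mu}{\lambda})J_\lambda x\big)$ allows comparison of consecutive steps with different time-steps. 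Starting from $\|u_1-u_0\|_p\le\lambda_1(1-\lambda_1\omega)^{-1}\|h_1-\mathcal{A}u_0\|_p$ (legitimate because $u_0=J_{\lambda_1}(u_0+\lambda_1\mathcal{A}u_0)$ for $u_0\in\dom_p(\mathcal{A})$) and iterating, a discrete Gr\"onwall argument yields
$$\frac{\|u_k-u_{k-1}\|_p}{\lambda_k}\le\Big(\prod_{j=1}^{k}(1-\lambda_j\omega)^{-1}\Big)\Big(\|h_1-\mathcal{A}u_0\|_p+\sum_{j=2}^{k}\|h_j-h_{j-1}\|_p\Big).$$
Absorbing the $\epsilon$-discretization error, replacing $\sum_j\|h_j-h_{j-1}\|_p$ by $\int_0^{t_k}\|\tfrac{dh}{ds}(s)\|_p\,ds$, and letting $\epsilon\to0$ (using the uniform convergence $u_\epsilon\to u$ from Theorem~\ref{thm:main2}) shows $u$ is Lipschitz on $[0,T]$ with the announced bound on $\|\tfrac{du}{dt}\|_p$; in particular $u\in W^{1,\infty}([0,T];\ell^p(X,\mu))$. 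The RNP then gives a.e.\ differentiability, and passing to the limit in $\mathcal{A}u_k=h_k-(u_k-u_{k-1})/\lambda_k$ — using that $m$-accretive operators have strongly closed graph and that, on the reflexive space $\ell^p$, the uniformly bounded difference quotients converge weakly to $u'$ at almost every Lebesgue point — identifies $u$ as a strong solution, $u'(t)+\mathcal{A}u(t)=h(t)$ a.e. The everywhere right-differentiability and the right-continuity of the right derivative are the refinement supplied by the B\'enilan--Crandall--Pazy theorem, obtained from the fact that $t\mapsto\|\tfrac{du}{dt}(t)\|_p$ is dominated by the right-hand side of the estimate, which is itself a right-continuous function of $t$.

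\textbf{Main obstacle.}
I expect the difficulty to lie not in a single hard estimate but in making the reduction to the abstract theory airtight: one must verify that the hypotheses of the cited results hold verbatim here, above all the range/surjectivity condition for $\operatorname{id}+\lambda\mathcal{A}$ on \emph{all} of $\ell^p(X,\mu)$ (this is exactly where Theorem~\ref{th: F1} is indispensable) and the identification of $\mathcal{A}$'s minimal section at $u_0$ with $\mathcal{A}u_0$. A secondary subtlety is the bookkeeping in the discrete Gr\"onwall step when the mesh is non-uniform and $h$ is only $W^{1,1}$, where the passage from $\sum_j\|h_j-h_{j-1}\|_p$ to $\int_0^{t_k}\|\tfrac{dh}{ds}\|_p\,ds$ must be controlled together with the error built into the $\epsilon$-discretization.
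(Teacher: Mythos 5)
Your proposal is correct and follows essentially the same route as the paper, which likewise deduces the theorem from the $\omega$-accretivity and range conditions established in Theorem \ref{th: F1} together with the abstract regularity results for $\omega$-accretive operators in reflexive, uniformly convex spaces (\cite[Theorems 4.5 and 4.6]{barbu2010nonlinear}), the restriction $1<p<\infty$ entering exactly where you place it. The only difference is that you additionally sketch the internal mechanism of those abstract theorems (discrete Gr\"onwall, Radon--Nikod\'ym property, closedness of the graph), whereas the paper simply cites them.
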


\begin{theorem}\label{thm:regularity2}
Let the assumptions of Theorem \ref{thm:main2} hold with the restriction that $f$ satisfies condition~\ref{F2:Lipschitz} and $\sup_{x\in X}\frac{\sum_{y\in X}w(x,y)+\kappa(x)}{\mu(x)}<\infty$. Then, the unique mild solution $u$ of \eqref{model_problem3} (with $u_0 \in \ell^p(X,\mu)$, $h \in L^1_{\textnormal{loc}}\left([0,T]; \ell^p\left(X,\mu\right)\right)$ and $1\leq p <\infty$) is a strong solution and 
$$
u(t)=u(0)-\int_0^t\mathcal Au(s)ds+\int_0^th(s)ds\qquad \forall t\in (0,T).
$$
Moreover, if $h\in C([0,T];\ell^p(X,\mu))$, then $u$ is a classical solution.
\end{theorem}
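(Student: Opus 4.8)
The guiding observation is that the extra hypothesis turns $\mathcal A$ into a \emph{globally Lipschitz operator with full domain}, after which the statement reduces to elementary Banach-space ODE theory combined with the uniqueness already proved in Theorem~\ref{thm:main2}. Write $C:=\sup_{x\in X}\frac{\sum_{y\in X}w(x,y)+\kappa(x)}{\mu(x)}<\infty$. First I would show that $\Delta$ restricts to a bounded linear operator on $\ell^p(X,\mu)$ for every $1\le p<\infty$, with $\dom_p(\Delta)=\ell^p(X,\mu)$. In the representation $\Delta u(x)=\frac1{\mu(x)}\sum_{y}w(x,y)(u(x)-u(y))+\frac{\kappa(x)}{\mu(x)}u(x)$ the killing part is bounded in $\ell^p$ by $C\|u\|_p$, while for the difference part one applies Jensen's inequality with respect to the probability weights $w(x,y)/\sum_z w(x,z)$, uses $\sum_z w(x,z)\le C\mu(x)$, and then the symmetry of $w$ together with $\sum_y w(x,y)\le C\mu(x)$ to obtain $\sum_{x,y}w(x,y)|u(x)-u(y)|^p\le 2^{p}C\,\|u\|_p^p$; combining, $\|\Delta u\|_p\le c(p,C)\|u\|_p$. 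The same bound $w(x,y)\le C\mu(y)$ (by symmetry) gives $\sum_y w(x,y)^p/\mu(y)^{p-1}\le C^{p-1}\sum_y w(x,y)<\infty$, i.e.\ Condition~\eqref{eq:Delta(C_c)} holds for all $p>1$; together with the fact that $C<\infty$ implies Condition~\ref{B}, the hypotheses of Theorem~\ref{thm:main2} are in force, so \eqref{model_problem3} has a unique mild solution $u$. Since $F$ satisfies \ref{F2:Lipschitz}, $\|Fu-Fv\|_p\le L\|u-v\|_p$ and $F(0)=0$, so $F\colon\ell^p\to\ell^p$ is globally Lipschitz, and therefore $\mathcal A=F+\Delta$ is a globally Lipschitz (in particular continuous) operator on $\ell^p(X,\mu)$ with $\dom_p(\mathcal A)=\ell^p(X,\mu)$.

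Next I would solve the abstract Cauchy problem $u'(t)+\mathcal Au(t)=h(t)$, $u(0)=u_0$, directly. Because $\mathcal A$ is globally Lipschitz and everywhere defined and $h\in L^1_{\mathrm{loc}}([0,T];\ell^p(X,\mu))$, the map $v\mapsto u_0+\int_0^{\,\cdot}(h(s)-\mathcal Av(s))\,ds$ is, with respect to a suitable exponentially weighted sup-norm, a strict contraction on $C([0,a];\ell^p(X,\mu))$ for each $a<T$; hence there is a unique $\tilde u\in C([0,T];\ell^p(X,\mu))$ with
\[
\tilde u(t)=u_0-\int_0^t\mathcal A\tilde u(s)\,ds+\int_0^t h(s)\,ds\qquad\forall\,t\in[0,T].
\]
Since $s\mapsto h(s)-\mathcal A\tilde u(s)$ lies in $L^1_{\mathrm{loc}}$, the function $\tilde u$ is locally absolutely continuous with $\tilde u'=h-\mathcal A\tilde u$ a.e.; recalling $\dom_p(\mathcal A)=\ell^p(X,\mu)$ and $\tilde u(0)=u_0$, this means $\tilde u$ is a strong solution of \eqref{model_problem3} in the sense of Definition~\ref{def:strong_solution}.

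To conclude I would identify $\tilde u$ with $u$: every strong solution is a mild solution (see, e.g., \cite[Chapter~4]{barbu2010nonlinear}), and mild solutions of \eqref{model_problem3} are unique by Theorem~\ref{thm:main2}; therefore $u=\tilde u$, which yields simultaneously the regularity statement and the integral identity. Finally, if $h\in C([0,T];\ell^p(X,\mu))$ then $s\mapsto h(s)-\mathcal Au(s)$ is continuous (continuity of $\mathcal A$ and of $u$), so the integral identity forces $u\in C^1([0,T];\ell^p(X,\mu))$ with $u'=h-\mathcal Au$ everywhere; together with $u(0)=u_0$ and $u(t)\in\dom_p(\mathcal A)$ this is precisely a classical solution in the sense of Definition~\ref{def:classical_solution}.

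I expect the only genuinely technical point to be the boundedness of $\Delta$ on $\ell^p$ (the Jensen/Schur estimate above); everything else is a soft combination of the Banach-space contraction argument with the uniqueness of mild solutions already established. A subsidiary point worth verifying carefully is that the extra hypothesis indeed implies Condition~\eqref{eq:Delta(C_c)} for $p>1$, so that Theorem~\ref{thm:main2} is applicable — but, as indicated, this is immediate from the symmetry of $w$ and the finite-degree assumption.
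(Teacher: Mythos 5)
Your proposal is correct, and it reaches the conclusion by a genuinely different route from the paper. The paper's proof is essentially a two-line application of an abstract result (quoted as Proposition \ref{Prop-strongclassical}, i.e.\ \cite[Theorem 1.6]{benilan1988evolution}): if $\dom_p(\mathcal A)$ is closed and $\mathcal A$ is continuous on its domain, then every mild solution is automatically strong and satisfies the integral identity; the paper then verifies continuity of $\mathcal A$ by citing \cite[Theorem 2.15]{keller2021graphs} for the boundedness of $\Delta$ under the degree bound, and uses the pointwise Lipschitz property of $f$ for $F$. You instead (i) prove the boundedness of $\Delta$ on $\ell^p$ by hand via the Jensen/Schur estimate (which is exactly the content of the cited theorem, and your computation is sound, as is the verification that the degree bound implies \eqref{eq:Delta(C_c)} and \ref{B}), and (ii) bypass the ``mild $\Rightarrow$ strong'' machinery entirely by constructing the strong solution $\tilde u$ directly through a Picard contraction for the integral equation, then identifying $\tilde u$ with the mild solution $u$ via ``strong $\Rightarrow$ mild'' plus the uniqueness in Theorem \ref{thm:main2}. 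Both identifications rest on standard cited facts (your ``strong $\Rightarrow$ mild'' step uses the $\omega$-accretivity of $\mathcal A$, which is available here by Theorem \ref{th: F1}\textbf{(II)} under \ref{B}), so the logic closes. What your approach buys is self-containedness and an explicit construction; what the paper's approach buys is brevity and the fact that it never needs to re-derive the mild solution, only to upgrade its regularity. The treatment of the classical case for continuous $h$ is identical in both.
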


\section{A related time-independent equation}\label{auxiliary section}

Motivated by \eqref{implicit_Euler} and with the aim to construct mild solutions to \eqref{model_problem3}, in this section we study the time-independent equation:
\begin{equation}\label{model_equation(discrete)}
 (\operatorname{id} + \lambda \mathcal{A})u = g \quad \text{in }X,
\end{equation}
where $g \in \ell^p(X,\mu)$, $\lambda>0$ and $\mathcal{A}$ is as defined in \eqref{A} with $F$ satisfying condition \textnormal{\ref{F1:monotonicity}} or \textnormal{\ref{F2:Lipschitz}}. More precisely, we will prove the following statement.

\begin{theorem}\label{th: F1}
	Let $G$ be a connected graph and $1\leq p <\infty$. Furthermore, if $p>1$, assume that condition~\eqref{eq:Delta(C_c)} holds. Then, there exists a dense subset $\Omega_p \subseteq \dom_p(\mathcal{A})$ such that
\begin{itemize} 
\item 
if \textnormal{\ref{F1:monotonicity}} holds, then $\mathcal{A}_{|\Omega_p}$ is accretive on $\ell^p(X,\mu)$;
\item 
if \textnormal{\ref{F2:Lipschitz}} holds with constant $L>0$, then $\mathcal{\mathcal A}_{|\Omega_p}$ is $\omega$-accretive on $\ell^p(X,\mu)$, with $\omega = L$.
\end{itemize}

Moreover,
\begin{itemize}
\item[\textnormal{\textbf{(I)}}] if \textnormal{\ref{F1:monotonicity}} holds, for every $\lambda >0$ and $g \in \ell^{p,\pm}(X,\mu)$, then there exists a unique solution to equation \eqref{model_equation(discrete)} satisfying $u \in \ell^{p,\pm}(X,\mu)\cap\Omega_p$
and the estimate  
\begin{equation}\label{contractivity}
    \|u\|_p\leq C \|g\|_p,
\end{equation}
with $C=1$.

If \textnormal{\ref{F2:Lipschitz}} holds with constant $L>0$, then for every $\lambda \in (0,1/L)$ and $g \in \ell^{p,\pm}(X,\mu)$, then there exists a unique solution to equation \eqref{model_equation(discrete)} satisfying $u \in \ell^{p,\pm}(X,\mu)\cap\Omega_p$ 
and the estimate  
\begin{equation}\label{contractivitybis}
    \|u\|_p\leq C \|g\|_p,
\end{equation}
with $C=(1-\lambda L)^{-1}$.  

\item[\textnormal{\textbf{(II)}}] If $G$ satisfies one of the conditions $\textnormal{\ref{IP}}$ or $\textnormal{\ref{B}}$, then $\Omega_p=\dom_p(\mathcal{A})$ and the  statements in \textnormal{\textbf{(I)}} hold for every $g\in \ell^p(X,\mu)$, with $2^{(p-1)/p}C$ instead of $C$.
\end{itemize}
\end{theorem}
As mentioned in the Introduction, the proof of Theorem \ref{th: F1} relies on an approximation scheme defined by considering an increasing exhaustion of the graph $X$ by finite sets $\{X_n\}_n$. The idea is to consider a sequence of operators $\mathcal{A}_n$ and data $g_n$ defined on finite Dirichlet subgraphs with nodes sets $X_n$ and to solve, on each of them, the equation $(\operatorname{id} + \lambda \mathcal{A}_n)u_n = g_n$ in $X_n$. The solutions $\{u_n\}_n$ form a sequence that converges, monotonically in case $\textbf{(I)}$ or by dominated convergence in case $\textbf{(II)}$, to a solution of \eqref{model_equation(discrete)} which satisfies the required norm estimates.  The accretivity (or $\omega$- accretivity) of $\mathcal{A}_{|\Omega_p}$ on a dense subset ensures uniqueness.
A key role in the proof of the accretivity issue is played by the definition of the set $\Omega_p$, which consists of functions in $\text{dom}_p(A)$ that can be approximated by sequences supported on the finite sets $X_n$, see Definition \ref{def:Omega} below. Then, the accretivity of $\mathcal{A}$ is recovered by exploiting that of the approximating operators $\mathcal{A}_n$. The additional assumptions $\textnormal{\ref{IP}}$ or $\textnormal{\ref{B}}$ in part $\textbf{(II)}$ are technical and guarantee injectivity of $\text{id}+\lambda \mathcal{A}$ on the full domain, this ensures the convergence of the whole approximating sequence and enables us to replace $\Omega_p$ with $\text{dom}_p(\mathcal{A})$. It is worth noting that the exhaustion method adopted here seems the most natural for preserving the actual geometry of the problem. Indeed, the approximation is spatial and dictated by the combinatorial structure of the graph. In particular, this differs from the classical Galerkin method, which relies on arbitrary finite‑dimensional subspaces (typically spanned by a chosen basis) that do not necessarily reflect the geometry of the graph.

\begin{remark}\label{staz} By setting $\alpha = 1/\lambda$ and recalling the definition of the operator 
$\mathcal{A}$, Theorem~\ref{th: F1}-\textnormal{\textbf{(I)}} and \textnormal{\textbf{(II)}} (under the stated assumptions) yields the 
existence and uniqueness of solutions to the equation
\begin{equation*}
  \Delta u + \alpha\, u = f(u) + \alpha\, g
  \qquad \text{in } X,
\end{equation*}
for every $\alpha>0$ provided that condition~\textnormal{\ref{F1:monotonicity}} 
is satisfied (or for $\alpha > L$ when condition~\textnormal{\ref{F2:Lipschitz}} 
holds). If $\alpha = 0$, since $f(0)=0$, the equation admits the trivial 
solution $u=0$. When \textnormal{\ref{F1:monotonicity}} holds and $G$ does not contain any infinite path or satisfies condition $\textnormal{\ref{IP}}$, a straightforward modification of the proof of the comparison principle stated in Theorem \ref{thm:min_principle} below yields that $u=0$ is the unique solution in $\ell^p(X,\mu)$.
\end{remark}

The section is organized as follows: we begin with a comparison principle and positivity properties (Subsection \ref{comparison}), followed by preliminary results on finite graphs (Subsection \ref{finite}). In Subsection \ref{Omegap} we introduce the set $\Omega_p$ and we prove accretivity of $\mathcal{\mathcal A}_{|\Omega_p}$. Finally, Subsection \ref{proofTh41} combines the above ingredients to establish Theorem \ref{th: F1}.

\subsection{Comparison principle}\label{comparison}
We provide here a  comparison principle and positivity preserving properties for solutions of equation \eqref{model_equation(discrete)}.   See~\cite{bianchi2022generalized,bksw} for related results.

\begin{theorem}\label{thm:min_principle}
Let $G$ be a connected graph. Let $F$ be defined as in \eqref{A}, and suppose that $f$ satisfies \textnormal{\ref{F1:monotonicity}} and $\lambda>0$ or $f$ satisfies~\textnormal{\ref{F2:Lipschitz}} and $0<\lambda<1/L$. Let $u \in \dom\left(\Delta\right)$ and assume that one of the following cases holds:

\begin{itemize}
\item[\textbf{a)}] $G$ does not contain any infinite path;
\item[\textbf{b)}] $G$ satisfies $\textnormal{\ref{IP}}$ and $\exists\; p> 0$ such that for every infinite path $\{x_n\}_n$ we have  $\sum_{n}|u(x_{n})|^p\mu(x_{n})<\infty$. 
\end{itemize}

If $\left( {\operatorname{id}}+\lambda \mathcal A\right)u \geq 0$ $(\leq 0)$, then $u \geq 0$ $(\leq 0)$. Moreover, if $u$ vanishes at some node, then $u\equiv 0$.

\begin{proof}
Define $\psi(s):=s-\lambda f(s)$, $s\in\R$. If $f$ satisfies \textnormal{\ref{F1:monotonicity}} then $\psi$ is strictly monotone increasing and surjective for all $\lambda >0$,  while if $f$ satisfies \textnormal{\ref{F2:Lipschitz}} this holds for all $0<\lambda <1/L$. Indeed, if $0<\lambda <1/L$, then for any $s>s'$ one has $\psi(s)- \psi(s')= s-s' - \lambda [f(s)-f(s')]\geq (1-\lambda L)(s-s')>0$. Hence, $\psi$ is strictly increasing.  Furthermore, we have $|\psi(s)|=|s -\lambda f(s)|\geq (1-\lambda L)|s|$ for all $s\in \R$, therefore $\lim_{\pm \infty} \psi(s)=\pm \infty$ for all $0<\lambda <1/L$. Hence, $\psi$ is surjective in $\R$ as well. Let $u \in \dom\left(\Delta\right)$ be such that $\left( {\operatorname{id}}+\lambda \mathcal A\right)u=\left( \psi + \lambda\Delta\right)u \geq 0$. If $u\geq0$, then there is nothing to prove.
Hence, we assume that there exists $x_{0}\in X$ such that $u(x_{0}) <0$. We will show that this leads to a contradiction in the two cases.

Recalling that $f(0) = 0$ in both the cases \textnormal{\ref{F1:monotonicity}} and \textnormal{\ref{F2:Lipschitz}}, then $\psi(0)=0$ and, combined with its  strictly monotone increasing property, we get
\begin{equation}\label{eq:min_1}
	\psi(u(x_{0}))+ \lambda\frac{\kappa(x_{0})}{\mu(x_{0})}u(x_{0}) < 0.
\end{equation}
Furthermore, as $\left( \psi + \lambda\Delta\right)u(x_0) \geq 0$,
\begin{equation}\label{eq:min_2}
	0\leq  \psi(u(x_{0}))+\frac{\lambda}{\mu(x_{0})}\sum_{y\in X}w(x_{0},y)\left(u(x_{0})-u(y)\right) + \lambda\frac{\kappa(x_{0})}{\mu(x_{0})}u(x_{0}).
\end{equation}
Combining the above inequalities \eqref{eq:min_1} and \eqref{eq:min_2}, we get
\begin{equation*}
	0<-\left[ \psi(u(x_{0})) +\lambda\frac{\kappa(x_{0})}{\mu(x_{0})}u(x_{0}) \right]\leq \frac{\lambda}{\mu(x_{0})}\sum_{y\in X}w(x_{0},y)\left(u(x_{0})-u(y)\right),
\end{equation*}
and since $w(\cdot,\cdot)\geq 0$ and $G$ is connected, there exists $y=x_{1}\sim x_{0}$ such that $u(x_{1})< u(x_{0})$. In particular, $u(x_1)<0$.

Hence, we see that every node where $u$ is negative is connected to a node where $u$ is strictly smaller. This is the basic observation that will be used in the two cases.
\vspace{0.2cm}

\noindent \textbf{Case a)} Iterating the procedure above, we find a sequence of distinct nodes $\{x_{k}\}_{k=0}^n$ such that $x_{0}\sim x_{1}\sim \cdots \sim x_{n}$ and
$$
u(x_{n})< u(x_{n-1}) < \ldots < u(x_{0}) <0.
$$
Since $G$ does not have any infinite path this sequence must end which leads to a contradiction.

\vspace{0.2cm}

\noindent \textbf{Case b)} 
In this case, we can obtain an infinite path $\{x_{n}\}_n$ such that 
$$
\ldots < u(x_{n})< u(x_{n-1}) < \ldots < u(x_{0}) <0.
$$
It follows that $|u(x_{n})|^p >|u(x_{0})|^p >0$, for every $n$, and therefore $$\sum_{{n}}|u(x_{n})|^p\mu(x_{n})>|u(x_{0})|^p\sum_{n}\mu(x_{n})=\infty,$$ which gives a contradiction.
\vspace{0.5cm}

Hence, we have established that $u \geq 0$ in both cases. Now, if there exists $x_0 \in X$ such that $u(x_0)=0$, then
$$0\leq  \psi u(x_{0}) + \lambda\Delta u(x_{0}) =-\frac{\lambda}{\mu(x_{0})}\sum_{y\in X}w(x_{0},y)u(y) \leq 0 $$
and thus $u(y)=0$ for all $y\sim x_0$.
Using induction and the assumption that $G$ is connected we get $u\equiv 0$.

The proof that $\left( \psi + \lambda\Delta\right) u \leq 0$ implies $u \leq 0$ is completely analogous.
    
\end{proof}

\end{theorem}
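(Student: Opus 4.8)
The plan is to recast the inequality in terms of the auxiliary function $\psi(t) := t - \lambda f(t)$. Since $\mathcal A u = -f(u) + \Delta u$, the hypothesis $(\operatorname{id} + \lambda\mathcal A)u \geq 0$ is equivalent to $\psi(u) + \lambda\Delta u \geq 0$. First I would record the elementary properties of $\psi$: it satisfies $\psi(0) = 0$ because $f(0)=0$, and it is strictly increasing — under \ref{F1:monotonicity} because for $t > s$ the quantities $t - s > 0$ and $f(t) - f(s) \leq 0$ push $\psi$ in the same direction, and under \ref{F2:Lipschitz} because $\psi(t) - \psi(s) \geq (1 - \lambda L)(t - s) > 0$ whenever $0 < \lambda < 1/L$.

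Next I would argue by contradiction. Assuming $u(x_0) < 0$ at some node $x_0$, strict monotonicity of $\psi$ gives $\psi(u(x_0)) < 0$, and since $\kappa(x_0)u(x_0) \leq 0$ the inequality $\psi(u(x_0)) + \lambda\Delta u(x_0) \geq 0$, after writing out $\Delta$ explicitly, forces
\[
\frac{\lambda}{\mu(x_0)}\sum_{y\in X} w(x_0,y)\bigl(u(x_0) - u(y)\bigr) > 0 .
\]
As the weights are nonnegative, there must be a neighbor $x_1 \sim x_0$ with $u(x_1) < u(x_0) < 0$. This is the descent step, and it applies again at $x_1$, and so on: every node where $u$ is negative has a neighbor where $u$ is strictly smaller (hence still negative), so the procedure never stalls. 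Iterating produces a walk $x_0 \sim x_1 \sim x_2 \sim \cdots$ along which the values $u(x_n)$ are strictly decreasing — in particular the $x_n$ are pairwise distinct, so this is an infinite path. In case \textbf{a)} this contradicts the absence of infinite paths in $G$. In case \textbf{b)} we have $|u(x_n)|^p \geq |u(x_0)|^p > 0$ for every $n$, whence $\sum_n |u(x_n)|^p\mu(x_n) \geq |u(x_0)|^p \sum_n \mu(x_n) = +\infty$ by \ref{IP}, contradicting the summability assumed on $u$ along infinite paths. Hence $u \geq 0$.

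To finish, I would use $u \geq 0$ to prove the rigidity statement: if $u(x_0) = 0$ then, since $\psi(0) = 0$, the inequality at $x_0$ reduces to $0 \leq -\tfrac{\lambda}{\mu(x_0)}\sum_{y} w(x_0,y)u(y) \leq 0$, forcing $u(y) = 0$ for every $y \sim x_0$; connectedness and induction then yield $u \equiv 0$. The case $(\operatorname{id} + \lambda\mathcal A)u \leq 0$ is treated symmetrically (equivalently, applying the result to $-u$, noting that $t \mapsto -f(-t)$ again satisfies \ref{F1:monotonicity} or \ref{F2:Lipschitz} with the same constant). The one point requiring a little care is case \textbf{b)}: one must check that the descent genuinely continues forever, which it does because it always remains at strictly negative nodes, so that a true infinite path is produced and \ref{IP} together with the hypothesis on $u$ can be invoked; all the remaining manipulations are routine bookkeeping with the graph Laplacian.
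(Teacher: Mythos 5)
Your proposal is correct and follows essentially the same route as the paper's proof: the same auxiliary function $\psi(t)=t-\lambda f(t)$, the same descent step producing a neighbor with a strictly smaller (negative) value, the same dichotomy between case \textbf{a)} and case \textbf{b)} via \ref{IP}, and the same rigidity argument by propagating zeros along the connected graph. The only cosmetic difference is that you omit the surjectivity of $\psi$, which is indeed not needed for this comparison statement (the paper records it here for later use in the existence lemma).
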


\begin{corollary}\label{cor:min}
With the hypotheses of Theorem~\ref{thm:min_principle}, let $u_1, u_2$ be solutions of
$$
\left({\operatorname{id}} + \lambda\mathcal A\right) u_k = g_k, \qquad   g_k \in C(X) \mbox{ for } k=1,2.
$$
If $g_1\geq g_2$, then $u_1 \geq u_2$. In particular,  ${\operatorname{id}} +\lambda \mathcal{A}$ is injective on $\dom_p(\mathcal A)$ for all $1\leq p< \infty$.  
\end{corollary}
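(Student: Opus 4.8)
The plan is to run the proof of Theorem~\ref{thm:min_principle} on the difference $v:=u_1-u_2$. One cannot apply that theorem as a black box, because $\mathcal A$ is nonlinear and $(\operatorname{id}+\lambda\mathcal A)(u_1-u_2)$ is not the difference of the two right-hand sides; however, the only feature of the proof that matters is the strict monotonicity of $\psi(t):=t-\lambda f(t)$, which, as recorded in the proof of Theorem~\ref{thm:min_principle}, holds for all $\lambda>0$ under \ref{F1:monotonicity} and for $0<\lambda<1/L$ under \ref{F2:Lipschitz}. Recalling $(\operatorname{id}+\lambda\mathcal A)u_k=\psi(u_k)+\lambda\Delta u_k$ and that $\dom(\Delta)$ is a linear space on which $\Delta$ acts linearly (so $v\in\dom(\Delta)$), the hypothesis $g_1\geq g_2$ becomes, for every $x\in X$,
\[
\psi(u_1(x))-\psi(u_2(x))+\frac{\lambda}{\mu(x)}\sum_{y\in X}w(x,y)\big(v(x)-v(y)\big)+\frac{\lambda\kappa(x)}{\mu(x)}\,v(x)\ \geq\ 0.
\]

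Assume, for contradiction, that $v(x_0)<0$ at some node $x_0$. Then $u_1(x_0)<u_2(x_0)$, so by strict monotonicity $\psi(u_1(x_0))-\psi(u_2(x_0))<0$, while $\lambda\kappa(x_0)\mu(x_0)^{-1}v(x_0)\leq 0$ because $\kappa\geq 0$. Feeding this into the displayed inequality forces $\sum_{y\in X}w(x_0,y)\big(v(x_0)-v(y)\big)>0$, and since $w\geq 0$ there must exist a neighbour $x_1\sim x_0$ with $v(x_1)<v(x_0)<0$. Iterating this observation --- every node where $v$ is negative has a neighbour where $v$ is strictly smaller --- produces a path $x_0\sim x_1\sim x_2\sim\cdots$ of pairwise distinct nodes along which $v$ is strictly decreasing and stays negative. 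In Case~a) this path is necessarily infinite, contradicting the absence of infinite paths. In Case~b) it is an infinite path with $|v(x_n)|>|v(x_0)|>0$ for all $n$, whence $\sum_n|v(x_n)|^p\mu(x_n)\geq|v(x_0)|^p\sum_n\mu(x_n)=\infty$ by \ref{IP}; this contradicts the summability of $v$ along infinite paths, which follows from the corresponding summability of $u_1$ and $u_2$ (taken with a common exponent $p$, automatic when $u_1,u_2\in\ell^p(X,\mu)$) via the elementary bound $|v|^p\leq 2^{p}\big(|u_1|^p+|u_2|^p\big)$. Hence $v\geq 0$, i.e.\ $u_1\geq u_2$; the reverse implication follows by symmetry.

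For the final assertion, if $u_1,u_2\in\dom_p(\mathcal A)$ satisfy $(\operatorname{id}+\lambda\mathcal A)u_1=(\operatorname{id}+\lambda\mathcal A)u_2$, apply the comparison just proved first with $g_1:=(\operatorname{id}+\lambda\mathcal A)u_1\geq(\operatorname{id}+\lambda\mathcal A)u_2=:g_2$ and then with the roles of $u_1$ and $u_2$ interchanged, obtaining $u_1\geq u_2$ and $u_2\geq u_1$, hence $u_1=u_2$; here membership in $\dom_p(\mathcal A)\subseteq\ell^p(X,\mu)$ supplies the path-summability needed in Case~b). The only delicate point, and the reason Theorem~\ref{thm:min_principle} cannot simply be quoted, is the nonlinearity of $\mathcal A$: the sign information must be extracted from $\psi(u_1)-\psi(u_2)$ through the strict monotonicity of $\psi$ rather than from a single inequality for $u_1-u_2$, but once this is done the path-chasing argument is verbatim that of Theorem~\ref{thm:min_principle}.
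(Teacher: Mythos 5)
Your proof is correct and follows essentially the same route as the paper: both reduce the comparison to the path-chasing argument of Theorem~\ref{thm:min_principle} applied to $v=u_1-u_2$, with the strict monotonicity of $\psi$ supplying the sign of $\psi(u_1(x))-\psi(u_2(x))$ at a node where $v<0$. The only cosmetic difference is that the paper packages this monotonicity into an auxiliary positive factor $\sigma$ so as to quote the earlier argument verbatim, whereas you rerun it directly; your explicit remark on the path-summability of $v$ in case~b) (via $|v|^p\leq 2^p(|u_1|^p+|u_2|^p)$) is a point the paper leaves implicit.
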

\begin{proof}
Let $\psi$ be defined as in the proof of Theorem \ref{thm:min_principle}; since it  is strictly increasing and $\psi(0)=0$, $\psi(a)-\psi(b)$ and $\psi(a-b)$ have the same sign for all $a,b\in\mathbb R$. Hence,    
there exists a positive function $\sigma\in C(X)$ such that
\[
\psi(u_1(x))-\psi(u_2(x))=\sigma(x) \psi(u_1(x)-u_2(x)),
\]
for all $x \in X$.
Indeed, we can define
\[
\sigma(x)=
\begin{cases}
1&\text{if } u_1(x)=u_2(x)\\
\frac{\psi(u_1(x))-\psi(u_2(x))}{\psi(u_1(x)-u_2(x))} &\text{otherwise}.
\end{cases}
\]
It follows that $u_1-u_2$ satisfies
\[
\sigma\psi(u_1-u_2) + \lambda \Delta(u_1-u_2)= g_1-g_2\geq 0 \,\,\text{ on } X.
\]
Then, the same argument used in the first part of proof of Theorem \ref{thm:min_principle} implies that $u_1-u_2\geq 0$.

    It follows that, if $u_k \in \dom_p(\mathcal A)$ for $k=1,2$ with $1\leq p< \infty$,  
    and
$$
g_1 =\left({\operatorname{id}} + \lambda\mathcal A\right) u_1 = \left({\operatorname{id}} + \lambda\mathcal A\right) u_2 = g_2
$$
then $u_1 = u_2$. In particular, ${\operatorname{id}} +\lambda \mathcal{A}$ is injective on $\dom_p(\mathcal A)$.  
\end{proof}

\subsection{Preliminary results on finite graphs}\label{finite}

\begin{lemma}\label{lem:accretivity_finite_graphp=1}
Let $G$ be a finite graph and $1\leq p<\infty$.
\begin{itemize}
\item[(i)] If     $f$ satisfies \textnormal{\ref{F1:monotonicity}}, then $\mathcal A$ is accretive on $\ell^p(X,\mu)$.
\item[(ii)] If     $f$ satisfies \textnormal{\ref{F2:Lipschitz}} with constant $L$, then $\mathcal A$ is $\omega$-accretive on $\ell^p(X,\mu)$, with $\omega = L$. 
\end{itemize}
 \end{lemma}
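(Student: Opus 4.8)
The plan is to reduce everything to the closed-form characterization of accretivity on $\ell^p$ recorded in \eqref{accretivity2}, using that $\mathcal{A}=\Delta+F$ splits the relevant expression into a \emph{linear} Laplacian contribution, already controlled by Lemma~\ref{lem:positivity} (equivalently, for $p=1$, by Proposition~\ref{prop:Delta_accretivity_finite_graphs}), and a \emph{nonlinear} contribution from $F$, which is handled termwise by the sign/Lipschitz hypotheses on $f$. First I would fix $u\neq v$ in $\ell^p(X,\mu)=C(X)$ (legitimate since $G$ is finite) and set $k:=u-v$, $z:=\mathcal{A}u-\mathcal{A}v=\Delta k-\bigl(f(u)-f(v)\bigr)$, using linearity of $\Delta$.

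For $1<p<\infty$ the quantity to control is $S:=\sum_{x}z(x)\,|k(x)|^{p-1}\operatorname{sgn}(k(x))\,\mu(x)$. Writing $S=\sum_{x}\Delta k(x)\,|k(x)|^{p-1}\operatorname{sgn}(k(x))\,\mu(x)-\sum_{x}\bigl(f(u(x))-f(v(x))\bigr)\,|k(x)|^{p-1}\operatorname{sgn}(k(x))\,\mu(x)$, the first sum is $\geq 0$ by Lemma~\ref{lem:positivity} applied to $k$ (valid because $C_c(X)=C(X)$ for a finite graph). Under \ref{F1:monotonicity}, since $f$ is decreasing, $\bigl(f(u(x))-f(v(x))\bigr)$ and $k(x)=u(x)-v(x)$ have opposite signs, so $\bigl(f(u(x))-f(v(x))\bigr)\operatorname{sgn}(k(x))\leq 0$ termwise and hence $S\geq 0$: $\mathcal{A}$ is accretive. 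Under \ref{F2:Lipschitz} I would instead verify accretivity of $\mathcal{A}+L\operatorname{id}$, i.e.\ replace $z$ by $z+Lk$; the extra term contributes $L\sum_{x}k(x)|k(x)|^{p-1}\operatorname{sgn}(k(x))\mu(x)=L\|k\|_p^p$, while, using $|f(u(x))-f(v(x))|\leq L|u(x)-v(x)|$, the nonlinear term is bounded below by $-L\sum_{x}|k(x)|^p\mu(x)=-L\|k\|_p^p$; the two cancel, so again $S\geq 0$, giving $\omega$-accretivity with $\omega=L$.

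For $p=1$ I would invoke the second branch of \eqref{accretivity2} and note that at any node with $k(x)=0$ one has $u(x)=v(x)$, hence $f(u(x))=f(v(x))$ and $z(x)=\Delta k(x)$; thus $\sum_{x:\,k(x)=0}|z(x)|\mu(x)=\sum_{x:\,k(x)=0}|\Delta k(x)|\mu(x)$, and the whole left-hand side of \eqref{accretivity2} for $\mathcal{A}$ equals the corresponding expression for $\Delta$ alone — which is $\geq 0$ by Proposition~\ref{prop:Delta_accretivity_finite_graphs} — minus $\sum_{x:\,k(x)\neq 0}\bigl(f(u(x))-f(v(x))\bigr)\operatorname{sgn}(k(x))\mu(x)$ (for the $\omega$-accretive statement, plus the further term $L\sum_{x:\,k(x)\neq 0}|k(x)|\mu(x)$). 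The same monotonicity, respectively Lipschitz, estimates as above close the argument. The computation is essentially bookkeeping; the only point requiring care is this $p=1$ case, since there the accretivity inequality is not merely a fixed linear functional applied to $z$, so one must check that the nonlinear perturbation does not interfere with the terms indexed by the zero set of $k$ — which it does not, precisely because $f(u)=f(v)$ there. I do not anticipate a genuine obstacle beyond this.
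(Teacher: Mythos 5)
Your proof is correct and follows essentially the same route as the paper: reduce to the closed-form accretivity criterion \eqref{accretivity2}, split the sum into the Laplacian contribution (nonnegative by Lemma~\ref{lem:positivity}, applicable since $C_c(X)=C(X)$ on a finite graph) and the nonlinear contribution (controlled termwise by monotonicity, resp.\ the Lipschitz bound). The only differences are cosmetic: for part (ii) the paper cites Proposition~\ref{prop:Delta_accretivity_finite_graphs} together with a standard Lipschitz-perturbation result from B\'enilan--Crandall--Pazy, whereas you carry out the cancellation $L\|k\|_p^p-L\|k\|_p^p$ explicitly, and your separate bookkeeping of the $p=1$ zero-set term (where $z=\Delta k$ because $f(u)=f(v)$ there) is more explicit than, but entirely consistent with, the paper's treatment.
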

\begin{proof}
 Both statements follow from general theory. Statement $(ii)$ is proved in \cite[Example 2.2]{benilan1988evolution}. As for $(i)$, it can be obtained by combining \cite[Proposition 2.20]{benilan1988evolution} and \cite[Proposition 2.23]{benilan1988evolution}. For the reader’s convenience, we nevertheless include below a direct elementary proof well suited to our setting.

According to \eqref{accretivity2}, 
with 
\begin{equation}
		z \coloneqq (F+\Delta)u -  (F+\Delta)v,\qquad k \coloneqq u-v,
	\end{equation}
it suffices to show that 
\[
\sum_{x\in X} [\Delta(u-v)(x) +(Fu-Fv)(x)] |u(x)-v(x)|^{p-1}\operatorname{sgn}(u(x)-v(x))\mu(x)
\geq 0,
\]
for every $u,v\in \ell^p(X,\mu).$
We break up the above sum into two summands. The first is 
\[
\sum_{x\in X} \Delta(u-v)(x) |u(x)-v(x)|^{p-1}\operatorname{sgn}(u(x)-v(x))\mu(x),
\]
which is nonnegative by Lemma~\ref{lem:positivity}.
The second summand 
\[
\sum_{x\in X} (Fu(x)-Fv(x)) |u(x)-v(x)|^{p-1}\operatorname{sgn}(u(x)-v(x))\mu(x),
\]
is also nonnegative, since, 
by monotonicity of $f$ (recall that $F=-f$), we have
\begin{equation*}
 \operatorname{sgn}(u(x) - v(x)) = \operatorname{sgn}(Fu(x) - Fv(x)).
\end{equation*}

This concludes the proof of $(i)$.

\end{proof}

\begin{lemma}\label{lem:bijective-sign-norm}
Let $G$ be a finite graph and $1\leq p<\infty$. Then, for every $g\in C(X)$ equation \eqref{model_equation(discrete)}
admits a unique solution $u$ provided that
\begin{itemize}
    \item $\lambda > 0$ if $f$ satisfies condition~\textnormal{\ref{F1:monotonicity}}, or
    \item $0 < \lambda < 1/L$ if $f$ satisfies condition~\textnormal{\ref{F2:Lipschitz}} with constant $L>0$.
\end{itemize}
In addition, the solution $u$ preserves the sign of the datum $g$ and  the following a priori estimate holds:
\begin{equation}\label{apriori}
    \|u\|_p \leq C\|g\|_p \, ,\quad C = \begin{cases}
        1 &\mbox{if } f \mbox{ satisfies } \textnormal{\ref{F1:monotonicity}},\\
        (1-\lambda L)^{-1}  &\mbox{if } f \mbox{ satisfies } \textnormal{\ref{F2:Lipschitz}}.
    \end{cases}
\end{equation}
\end{lemma}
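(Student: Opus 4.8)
\textbf{Proof plan for Lemma \ref{lem:bijective-sign-norm}.}

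The plan is to reduce the equation $(\operatorname{id}+\lambda\mathcal{A})u=g$ to a scalar fixed-point / monotonicity problem by exploiting that $G$ is finite, so $C(X)=\ell^p(X,\mu)$ is finite dimensional. Write the equation explicitly as $\psi(u(x)) + \lambda\Delta u(x) = g(x)$ for all $x\in X$, where $\psi(t):=t-\lambda f(t)$ is the strictly increasing surjective function introduced in the proof of Theorem \ref{thm:min_principle} (strictly increasing and onto $\R$ for $\lambda>0$ under \ref{F1:monotonicity}, and for $0<\lambda<1/L$ under \ref{F2:Lipschitz}). Since $\psi$ is a homeomorphism of $\R$, the map $u\mapsto \psi\circ u$ is a homeomorphism of $C(X)$, and the equation is equivalent to finding a zero of the continuous map $T(u):=\psi(u)+\lambda\Delta u - g$ on the finite-dimensional space $C(X)$.

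\emph{Existence.} First I would establish existence. One clean route: by Lemma \ref{lem:accretivity_finite_graphp=1}, $\mathcal{A}$ is accretive (case \ref{F1:monotonicity}) or $\omega$-accretive with $\omega=L$ (case \ref{F2:Lipschitz}) on $\ell^p(X,\mu)$; in either case $\operatorname{id}+\lambda\mathcal{A}$ is injective on $C(X)$ for the stated range of $\lambda$ (for \ref{F2:Lipschitz}, accretivity of $\lambda\mathcal{A}$ holds once $0<\lambda<1/L$ by \eqref{eq:omega-accretivity-1}). Combined with continuity and a degree/coercivity argument this gives surjectivity: it suffices to show $T$ is proper (or coercive) on the finite-dimensional space, i.e. $\|u\|_p\to\infty\Rightarrow\|T(u)\|_p\to\infty$. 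Actually the a priori estimate below does exactly this job, so I would prove the estimate first for any solution, then invoke a standard argument (continuity of $T$ + injectivity + the a priori bound, via Brouwer degree or the Minty–Browder theorem for the accretive map $\operatorname{id}+\lambda\mathcal{A}$) to conclude that $\operatorname{id}+\lambda\mathcal{A}$ is onto $C(X)$. Uniqueness is immediate from injectivity (Corollary \ref{cor:min}, or directly from accretivity).

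\emph{Sign preservation and a priori estimate.} Sign preservation is Corollary \ref{cor:min}: if $g\geq 0$ then, comparing $u$ with the solution $0$ of $(\operatorname{id}+\lambda\mathcal{A})0=0$ (using $f(0)=0$), we get $u\geq 0$; similarly $g\leq 0\Rightarrow u\leq 0$. For the a priori estimate I would test the equation against $|u|^{p-1}\operatorname{sgn}(u)\,\mu$ and sum over $X$: $\sum_x \psi(u(x))|u(x)|^{p-1}\operatorname{sgn}(u(x))\mu(x) + \lambda\sum_x \Delta u(x)|u(x)|^{p-1}\operatorname{sgn}(u(x))\mu(x) = \sum_x g(x)|u(x)|^{p-1}\operatorname{sgn}(u(x))\mu(x)$. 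The Laplacian term is $\geq 0$ by Lemma \ref{lem:positivity}. For the first term, under \ref{F1:monotonicity} monotonicity of $f$ gives $\psi(t)\operatorname{sgn}(t)=|t|-\lambda f(t)\operatorname{sgn}(t)\geq |t|$, hence $\sum_x \psi(u(x))|u(x)|^{p-1}\operatorname{sgn}(u(x))\mu(x)\geq \|u\|_p^p$; under \ref{F2:Lipschitz}, $\psi(t)\operatorname{sgn}(t)\geq(1-\lambda L)|t|$ gives $\geq (1-\lambda L)\|u\|_p^p$. The right-hand side is bounded by $\|g\|_p\|u\|_p^{p-1}$ via Hölder. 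Dividing by $\|u\|_p^{p-1}$ yields $\|u\|_p\leq\|g\|_p$ in case \ref{F1:monotonicity} and $\|u\|_p\leq(1-\lambda L)^{-1}\|g\|_p$ in case \ref{F2:Lipschitz}, which is \eqref{apriori}.

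The main obstacle is the existence (surjectivity) part: injectivity and the a priori bound are routine given the earlier results, but turning them into surjectivity on the finite-dimensional space requires invoking the right abstract tool — either a topological degree argument (the homotopy $t\mapsto \psi(u)+t\lambda\Delta u - g$ together with the coercivity estimate, noting $\psi$ alone is a bijection), or directly the surjectivity of $m$-accretive perturbations. I expect the cleanest writeup routes through Lemma \ref{lem:accretivity_finite_graphp=1} and Proposition \ref{prop:Delta_accretivity_finite_graphs}: on a finite graph an accretive continuous operator on a finite-dimensional space with the coercivity furnished by \eqref{apriori} is $m$-accretive, so $\operatorname{id}+\lambda\mathcal{A}$ is onto.
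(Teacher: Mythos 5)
Your proposal is correct in all its parts, and the uniqueness, sign-preservation, and a priori estimate arguments coincide with the paper's (the paper also writes the equation as $(\psi+\lambda\Delta)u=g$ with $\psi(t)=t-\lambda f(t)$, gets injectivity and sign from Theorem~\ref{thm:min_principle}~a) and Corollary~\ref{cor:min}, and derives \eqref{apriori} by testing against $|u|^{p-1}\operatorname{sgn}(u)\mu$, using Lemma~\ref{lem:positivity} and H\"older). The genuine difference is in the existence step: the paper does not use degree theory or accretivity at all here, but instead observes that $\lambda\Delta$ is a diagonally dominant matrix and that $\psi$ is a strictly increasing surjection of $\R$, and then cites a classical result of Willson on the solvability of $\psi(u)+Au=g$ for such systems. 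Your route --- injectivity of the continuous map $T(u)=\psi(u)+\lambda\Delta u-g$ from accretivity, coercivity of $T$ from the same testing computation that gives \eqref{apriori}, and then surjectivity on the finite-dimensional space --- is a legitimate self-contained alternative; the cleanest way to close it is either the homotopy $H_t(u)=\psi(u)+t\lambda\Delta u-g$ with the uniform a priori bound and Brouwer degree (degree $1$ at $t=0$ since $\psi$ is an increasing homeomorphism applied coordinatewise), or invariance of domain (a continuous injective proper self-map of $\R^{|X|}$ has open and closed, hence full, image). One caution: do not phrase this as ``Minty--Browder for the accretive map,'' since accretivity in $\ell^p$ for $p\neq 2$ is not monotonicity in the Hilbert-space sense and Minty--Browder does not apply verbatim; the degree or invariance-of-domain formulation is the one that actually works. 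Your approach buys independence from the external reference at the cost of a slightly longer topological argument; the paper's buys brevity by outsourcing existence to a known theorem on diagonally dominant nonlinear systems.
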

\begin{proof}
 Recalling that $\mathcal{A} = F + \Delta$, with $F = - f$,  we start noticing that equation \eqref{model_equation(discrete)} can be written as \begin{equation}\label{eqPsi}
(\psi+\lambda \Delta)u=g,
\end{equation}
where $\psi(s):=s-\lambda f(s)$. In the proof of Theorem \ref{thm:min_principle} we showed that if $f$ satisfies \textnormal{\ref{F1:monotonicity}} then $\psi$ is strictly monotone increasing and surjective for all $\lambda >0$,  while if $f$ satisfies \textnormal{\ref{F2:Lipschitz}} this holds for all $0<\lambda <1/L$.  

Then, since $\lambda\Delta$ is a diagonally dominant matrix, all assumptions of \cite[Theorem 1]{willson1968solutions} are satisfied and we conclude that, for every $g$, there exists a solution $u$ of \eqref{eqPsi} and, in turn, of~\eqref{model_equation(discrete)}. 

Still referring to equation \eqref{eqPsi}, the fact that $\operatorname{id}+\lambda\mathcal A$ is injective on $\dom_p(\mathcal A)$ for all $1\leq p< \infty$ and  preserves the sign of $g$ as soon as $\psi$ is strictly monotone increasing follows by case $a)$ of Theorem \ref{thm:min_principle} and Corollary \ref{cor:min}.

To prove \eqref{apriori}, let us start observing that 
\begin{align}
\| u\|_p^p &= \sum_{x\in X} u(x)\operatorname{sign}(u(x))|u(x)|^{p-1}\mu(x)\nonumber\\
&= \sum_{x\in X} g(x)|u(x)|^{p-1}\operatorname{sign}(u(x))\mu(x) - \lambda\sum_{x\in X} (Fu(x) + \Delta u(x))|u(x)|^{p-1}\operatorname{sign}(u(x))\mu(x)\nonumber\\
&\leq \sum_{x\in X} |g(x)||u(x)|^{p-1}\mu(x) - \lambda\sum_{x\in X} (Fu(x) + \Delta u(x))|u(x)|^{p-1}\operatorname{sign}(u(x))\mu(x) \nonumber\\
&\leq \|g\|_p \|u\|_p^{p-1} - \lambda\sum_{x\in X} (Fu(x) + \Delta u(x))|u(x)|^{p-1}\operatorname{sign}(u(x))\mu(x), \label{eq:Brezis_lemma}
\end{align}
where in the second equality we used the identity $u(x)=g(x) -\lambda(F + \Delta)u(x)$, and in the last inequality we applied Hölder's inequality. We now turn to deriving a lower bound for the second term in \eqref{eq:Brezis_lemma}. Since the inequality~\eqref{apriori} is trivially verified for $u = 0$, hereafter we will assume $u$ not identically zero. We have two cases:

\textit{Case of $f$ satisfying Condition \textnormal{\ref{F1:monotonicity}}:} Observe that 
\begin{equation*}
\sum_{x\in X} \Delta u(x)|u(x)|^{p-1}\operatorname{sign}(u(x))\mu(x) \geq 0 
\end{equation*}
by Lemma~\ref{lem:positivity}. 
From \ref{F1:monotonicity}, it holds that $\operatorname{sgn}(u(x)) = \operatorname{sgn}(Fu(x))$ and then
$$
\sum_{x\in X} Fu(x) |u(x)|^{p-1}\operatorname{sign}(u(x))\mu(x) = \sum_{x\in X} |Fu(x)||u(x)|^{p-1} \mu(x)  \geq 0.
$$
Combining the above results in~\eqref{eq:Brezis_lemma} we infer that 
$$
\|u\|_p^p \leq \|g\|_p \|u\|_p^{p-1},
$$
that is, $\|u\|_p \leq \|g\|_p$.

\noindent\textit{Case of $f$ satisfying Condition \textnormal{\ref{F2:Lipschitz}}:} We have that
\begin{align*}
\sum_{x\in X} (Fu(x) + \Delta u(x))|u(x)|^{p-1}\operatorname{sign}(u(x))\mu(x) \geq &-\sum_{x\in X} |Fu(x)||u(x)|^{p-1}\mu(x) \\
&-|F(0)| \sum_{x\in X} |u(x)|^{p-1}\mu(x)\\
&+ \sum_{x\in X} \Delta u(x)|u(x)|^{p-1}\operatorname{sign}(u(x))\mu(x)\\
\geq &-L\sum_{x\in X} |u(x)|^{p}\mu(x) -|F(0)| \sum_{x\in X} |u(x)|^{p-1}\mu(x)\\
\geq &-L\|u\|_p^p,
\end{align*}
where  in the second inequality we used Lemma~\ref{lem:positivity} and the Lipschitz assumption \ref{F2:Lipschitz}, and in the last inequality we used the fact that $F(0)=0$. Combining the above results in~\eqref{eq:Brezis_lemma} we infer that
\begin{equation*}
\|u\|_p^p \leq \|g\|_p \|u\|_p^{p-1} + \lambda L\|u\|_p^p,
\end{equation*}
and consequently 
\begin{equation*}
\|u\|_p \leq (1 - \lambda L)^{-1}\|g\|_p \quad \mbox{for every } 0<\lambda < 1/L.
\end{equation*}
\end{proof}

\subsection{The set $\Omega_p$}\label{Omegap}

Assume the validity of~\eqref{eq:Delta(C_c)} for a fixed $1\leq p < \infty$. We first introduce a sequence of operators $\Amin$ whose purpose is to approximate the operator $\mathcal{A}$ over an increasing exhaustion $\{X_n\}_n$ of $X$ by finite subsets, i.e., a sequence of finite subsets $X_n$ of $X$ such that $$X_n \subseteq X_{n+1} \quad \text{ and }\quad  X=\cup_{n=1}^{\infty} X_n\,.$$
\begin{definition}[The operator $\Amin$]\label{def:Amin}
For $n\geq 1$ let 
\begin{equation*}\label{eq:embedding-projection2}
\boldsymbol{\mathfrak{i}}_{n} \colon C(X_{n}) \hookrightarrow C(X), \quad  \boldsymbol{\pi}_n \colon C(X) \to C(X_n)
\end{equation*}
be the canonical embeddings and projections, respectively. 
Define 
	$$
	\Amin \colon \dom_p\left( \Amin \right)\subseteq \ell^{p}\left(X,\mu\right) \to \ell^{p}\left(X,\mu\right)
	$$
	by
	\begin{align*}
		\dom_p\left( \Amin \right)\coloneqq C_c(X), \quad \Amin u\coloneqq \boldsymbol{\mathfrak{i}}_{n}\left(F + \Deltadirn \right)\boldsymbol{\pi}_{n} u,
	\end{align*}
	where $\Deltadirn$ is the graph Laplacian associated to the Dirichlet subgraph $G_{\textnormal{dir},n} \subseteq G$ on the finite node set $X_n\subseteq X$.
\end{definition}

The sequence of operators $\Amin$ is involved in the definition of a subset $\Omega_p$ of $\dom_p(\mathcal{A})$. As we will see below, $\Omega_p$ is dense in $\dom_p(\mathcal{A})$ and $\mathcal{A}$ restricted to $\Omega_p$ is accretive.  
\begin{definition}[The set $\Omega_p$]\label{def:Omega}
 We define $\Omega_p \subseteq \dom_p(\mathcal{A})$ by letting
 \begin{equation*}\label{eq:omega2}
		\Omega_p\coloneqq \{u \in \dom_p(\mathcal{A}) \mid \exists\, \{u_n\}_n \mbox{ s.t. } \operatorname{supp}u_n\subseteq X_n,\; \lim_{n\to \infty}\|u_n -u \|_p=0,\; \lim_{n\to \infty}\|\Amin u_n -\mathcal{A}u \|_p=0 \}.
	\end{equation*}
\end{definition}
Let us remark that $\Omega_p = \dom_p(\mathcal{A}) = C(X)$
if $G$ is finite.

While the definition of $\Omega_p$ depends on the choice of the exhaustion,
this set always contains all finitely supported functions as will be shown in the next lemma.

\begin{lemma}\label{cor:FC}
We have that $C_c(X) \subseteq \Omega_p \subseteq \dom_p(\mathcal{A})$. In particular, $$\overline{\Omega_p}= \overline{\dom_p(\mathcal{A})}=\ell^p(X,\mu).$$	 
\end{lemma}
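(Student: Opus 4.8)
The plan is to prove the chain of inclusions $C_c(X) \subseteq \Omega_p \subseteq \dom_p(\mathcal{A})$ and then deduce the density statement. The second inclusion $\Omega_p \subseteq \dom_p(\mathcal{A})$ is immediate from Definition \ref{def:Omega}, so the work is in showing $C_c(X) \subseteq \Omega_p$. Given $u \in C_c(X)$, I would pick the \emph{constant} approximating sequence $u_n := u$ for $n$ large enough that $\operatorname{supp} u \subseteq X_n$ (this is possible since $\{X_n\}_n$ exhausts $X$ and $u$ has finite support). Then trivially $\operatorname{supp} u_n \subseteq X_n$ and $\|u_n - u\|_p = 0 \to 0$, so the only nontrivial requirement to check is $\|\Amin u_n - \mathcal{A} u\|_p \to 0$.

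For this last point, I would first note that $\mathcal{A} u \in \ell^p(X,\mu)$: indeed $u \in C_c(X)$, $\Delta u$ is a finite linear combination of terms $\Delta \delta_x$, each of which lies in $\ell^p(X,\mu)$ by assumption \eqref{eq:Delta(C_c)} (recall this is automatic for $p=1$ by Remark \ref{rem:Delta(C_c)_in_ell1}), and $Fu = -f(u)$ is finitely supported since $f(0)=0$, hence $Fu \in \ell^p$; therefore $u \in \dom_p(\mathcal{A})$. Next, using the identities \eqref{eq:Delta_dir_property_2} relating $\Deltadirn$ and $\Delta$ — namely that for functions supported in $X_n$ one has $\Delta u(y) = \Deltadirn \boldsymbol{\pi}_n u (y)$ for $y \in X_n$ — together with $\boldsymbol{\mathfrak{i}}_n \boldsymbol{\pi}_n u = u$ (valid once $\operatorname{supp} u \subseteq X_n$), I would show that $\Amin u = \boldsymbol{\mathfrak{i}}_n(F + \Deltadirn)\boldsymbol{\pi}_n u$ agrees with $\mathcal{A} u = (F+\Delta)u$ on all of $X$ for $n$ large: on $X_n$ they coincide by \eqref{eq:Delta_dir_property_2} and because $F$ acts pointwise, and off $X_n$ both vanish (the embedding $\boldsymbol{\mathfrak{i}}_n$ forces $\Amin u$ to vanish outside $X_n$, while $(F+\Delta)u$ vanishes outside $X_n$ because $u$ and its Laplacian image are supported where $u$ and its neighbors live — here one must be slightly careful: $\Delta u$ need not be supported in $X_n$, but $\boldsymbol{\mathfrak{i}}_n \boldsymbol{\pi}_n \Delta u$ kills the exterior part, and the key is that $\Deltadirn$ precisely restores the "missing" boundary contributions so that $\Delta u|_{X_n} = \Deltadirn(\boldsymbol{\pi}_n u)$). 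Consequently $\Amin u_n = \boldsymbol{\mathfrak{i}}_n \boldsymbol{\pi}_n (\mathcal{A} u)$, and since $\mathcal{A}u \in \ell^p(X,\mu)$ we get $\|\Amin u_n - \mathcal{A}u\|_p = \|\boldsymbol{\mathfrak{i}}_n\boldsymbol{\pi}_n \mathcal{A}u - \mathcal{A}u\|_p \to 0$ by dominated convergence (the truncations of an $\ell^p$ function converge to it in $\ell^p$). This establishes $u \in \Omega_p$.

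The most delicate step is the bookkeeping around $\Deltadirn$ versus $\Delta$ outside $X_n$: one must resist the temptation to claim $\Amin u = \mathcal{A}u$ exactly, and instead track carefully that $\Amin u_n = \boldsymbol{\mathfrak{i}}_n\boldsymbol{\pi}_n(\mathcal{A}u)$. Once the inclusion $C_c(X) \subseteq \Omega_p \subseteq \dom_p(\mathcal{A})$ is in hand, the final "in particular" claim follows by taking closures in $\ell^p(X,\mu)$: $C_c(X)$ is dense in $\ell^p(X,\mu)$ for $1 \le p < \infty$, so $\overline{C_c(X)} \subseteq \overline{\Omega_p} \subseteq \overline{\dom_p(\mathcal{A})} \subseteq \ell^p(X,\mu) = \overline{C_c(X)}$, forcing all these closures to equal $\ell^p(X,\mu)$.
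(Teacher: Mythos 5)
Your proposal is correct and follows essentially the same route as the paper: the paper also takes the truncated sequence $u_n=\boldsymbol{\mathfrak{i}}_n\boldsymbol{\pi}_n u$ (which coincides with your constant choice $u_n=u$ once $\operatorname{supp}u\subseteq X_n$), uses \eqref{eq:Delta_dir_property_2} to get $\Amin u_n=\boldsymbol{\mathfrak{i}}_n\boldsymbol{\pi}_n\mathcal{A}u$ for large $n$, and concludes by dominated convergence. The only cosmetic point is that the definition of $\Omega_p$ asks for $\operatorname{supp}u_n\subseteq X_n$ for every $n$, so for small $n$ you should take the truncation (or zero) rather than $u$ itself, exactly as the paper does.
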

\begin{proof}
Let us fix $u \in C_c(X)$. Thanks to condition \eqref{eq:Delta(C_c)}, $u\in \dom_p(\mathcal{A})$ (recall Remark \ref{rem:Delta(C_c)_in_ell1}). Define 
	$$
	u_n(x)\coloneqq \boldsymbol{\mathfrak{i}}_{n}\boldsymbol{\pi}_n u(x)= \begin{cases}
		u(x) & \mbox{if } x \in X_n,\\
		0 &\mbox{otherwise}.
	\end{cases}
	$$
	Clearly, $\operatorname{supp}u_n \subseteq X_n$ and $\lim_{n\to \infty}\|u_n -u \|_p=0$. Since, $u\in C_c(X)$, there exists an $N>0$ such that $u(x)=0$ for every $x\in X\setminus X_N$.  
	
	We have  $u_n\in \dom\left(\Delta\right)\cap \left\{u \in C(X) \mid u \equiv 0 \mbox{ on } X\setminus X_n \right\}$
    for every $n\geq N$ and then by equation~\eqref{eq:Delta_dir_property_2}
	\begin{equation*}
		\Deltadirn \boldsymbol{\pi}_n  u_n (x) =  \Delta u_n (x) =  \Delta u (x) \quad \forall\; x\in X_n \text{ with } n\geq N,
	\end{equation*}
	that is,
	\begin{equation*}
		\Deltadirn \boldsymbol{\pi}_n  u_n = \boldsymbol{\pi}_n\Delta  u \quad \forall\; n\geq N.
	\end{equation*}
	Therefore, recalling that $F(0)=0$,
	\begin{align*}
		\Amin u_n &= \boldsymbol{\mathfrak{i}}_{n}\left( F + \Deltadirn \right) \boldsymbol{\pi}_n u_n\\
        &= \boldsymbol{\mathfrak{i}}_{n} F \boldsymbol{\pi}_n u_n  + \boldsymbol{\mathfrak{i}}_{n}\Deltadirn \boldsymbol{\pi}_n   u_n\\
        &= \boldsymbol{\mathfrak{i}}_{n}\boldsymbol{\pi}_n F  u_n  + \boldsymbol{\mathfrak{i}}_{n}\boldsymbol{\pi}_n\Delta  u \\
        &= \boldsymbol{\mathfrak{i}}_{n}\boldsymbol{\pi}_n\mathcal{A} u, \quad \forall\; n\geq N
	\end{align*}
	and, since $\mathcal{A}u \in \ell^p(X,\mu)$, it follows that $\lim_{n\to \infty}\| \Amin u_n - \mathcal{A} u\|_p=0$
	by dominated convergence.
\end{proof}

Finally, we prove the accretivity properties of $\mathcal{A}_{|\Omega_p}$.

\begin{lemma}\label{lem:L_Omega_accretive}
Let $1\leq p <  \infty$. 
\begin{itemize}
\item[(i)] If     $f$ satisfies \textnormal{\ref{F1:monotonicity}}, then $\mathcal{A}_{|\Omega_p}$ is accretive on $\ell^p(X,\mu)$.
\item[(ii)] If     $f$ satisfies \textnormal{\ref{F2:Lipschitz}} with constant $L$, then $\mathcal{A}_{|\Omega_p}$ is $\omega$-accretive on $\ell^p(X,\mu)$, with $\omega = L$. 
\end{itemize}
\end{lemma}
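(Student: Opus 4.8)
The plan is to transfer the accretivity (resp. $L$-accretivity) of the finite-graph operators $F+\Deltadirn$ furnished by Lemma~\ref{lem:accretivity_finite_graphp=1} up to $\mathcal{A}_{|\Omega_p}$, by passing to the limit along the approximating sequences that are built into the very definition of $\Omega_p$.

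First I would fix $u,v\in\Omega_p$ and choose, by Definition~\ref{def:Omega}, sequences $\{u_n\}_n,\{v_n\}_n$ with $\operatorname{supp}u_n,\operatorname{supp}v_n\subseteq X_n$, $u_n\to u$, $v_n\to v$, $\Amin u_n\to\mathcal{A}u$ and $\Amin v_n\to\mathcal{A}v$ in $\ell^p(X,\mu)$. Since $u_n,v_n\in C_c(X)$ are supported in $X_n$, one has $u_n=\boldsymbol{\mathfrak{i}}_n\boldsymbol{\pi}_nu_n$ and $\Amin u_n=\boldsymbol{\mathfrak{i}}_n(F+\Deltadirn)\boldsymbol{\pi}_nu_n$, and likewise for $v_n$. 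Observing that $\boldsymbol{\mathfrak{i}}_n$ is a linear isometry from $\ell^p(X_n,\mu_{|X_n})$ onto the subspace of $\ell^p(X,\mu)$ of functions vanishing off $X_n$ (immediate, since $\mu'=\mu_{|X_n}$), I would apply Lemma~\ref{lem:accretivity_finite_graphp=1} to the finite Dirichlet subgraph $G_{\textnormal{dir},n}$ — noting that the restriction of $F=-f$ to $C(X_n)$ still satisfies \ref{F1:monotonicity}, resp. \ref{F2:Lipschitz} with the same constant — and push the inequality through $\boldsymbol{\mathfrak{i}}_n$, using $u_n=\boldsymbol{\mathfrak{i}}_n\boldsymbol{\pi}_nu_n$. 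This gives
\[
\bigl\|(u_n-v_n)+\lambda(\Amin u_n-\Amin v_n)\bigr\|_p\ \geq\ (1-\omega\lambda)\,\|u_n-v_n\|_p ,
\]
valid for every $\lambda>0$ with $\omega=0$ under \ref{F1:monotonicity}, and for every $0\leq\lambda L<1$ with $\omega=L$ under \ref{F2:Lipschitz} (in the latter case via the reformulation \eqref{eq:omega-accretivity-1}).

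Finally, letting $n\to\infty$ and invoking continuity of the $\ell^p$-norm together with the four convergences above yields
\[
\bigl\|(u-v)+\lambda(\mathcal{A}u-\mathcal{A}v)\bigr\|_p\ \geq\ (1-\omega\lambda)\,\|u-v\|_p
\]
in the same ranges of $\lambda$. Under \ref{F1:monotonicity} ($\omega=0$) this is precisely the accretivity of $\mathcal{A}_{|\Omega_p}$, while under \ref{F2:Lipschitz} it is, by the characterization \eqref{eq:omega-accretivity-1} recalled after Definition~\ref{def: accretivity}, precisely its $L$-accretivity. I do not expect a genuine obstacle here: all the analytic content already sits in Lemma~\ref{lem:accretivity_finite_graphp=1}, and $\Omega_p$ is designed exactly so that the limit passes through; the only care needed is the bookkeeping that $\boldsymbol{\mathfrak{i}}_n$ preserves the norm, that $u_n=\boldsymbol{\mathfrak{i}}_n\boldsymbol{\pi}_nu_n$ because $\operatorname{supp}u_n\subseteq X_n$, and that Lemma~\ref{lem:accretivity_finite_graphp=1} applies verbatim to each finite graph $G_{\textnormal{dir},n}$.
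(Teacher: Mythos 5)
Your proposal is correct and follows essentially the same route as the paper: pick the approximating sequences provided by the definition of $\Omega_p$, invoke Lemma~\ref{lem:accretivity_finite_graphp=1} on each finite Dirichlet subgraph to get the (\,$\omega$-)accretivity inequality for $\Amin$ applied to $u_n,v_n$, and pass to the limit using the four $\ell^p$ convergences. The only difference is that you spell out the transfer through the isometry $\boldsymbol{\mathfrak{i}}_n$, which the paper leaves implicit (citing an auxiliary corollary); this is harmless and, if anything, makes the bookkeeping more explicit.
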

\begin{proof}
If $G$ is finite, then $\Omega_p =  \dom_p(\mathcal{A}) = C(X)$ and the conclusion holds by Lemma~\ref{lem:accretivity_finite_graphp=1}. If $G$ is infinite, let $u,v \in \Omega_p$. Then, by the definition of $\Omega_p$, there exists $\{u_n\}_n, \{v_n\}_n$ such that 
$$
\lim_{n\to \infty}\|u_n-u\|_p=\lim_{n\to \infty}\|v_n-v\|_p=0, \qquad \lim_{n\to \infty}\|\Amin u_n-\mathcal{A}u\|_p=\lim_{n\to \infty}\|\Amin v_n-\mathcal{A}v\|_p=0.
$$
By Lemma~\ref{lem:accretivity_finite_graphp=1} it follows that $\Amin$ is accretive in case $(i)$, resp. $\omega$-accretive with $\omega = L$ in case $(ii)$, on $\ell^p(X,\mu)$ for every $n$ (see also \cite[Corollary B.4]{bianchi2022generalized}) and, in turn, we deduce  that
\begin{align*}
	\left\|(u - v) + \lambda \left(\mathcal{A}u - \mathcal{A}v\right) \right\|_p &= \lim_{n\to \infty} \left\|(u_n - v_n) + \lambda \left(\Amin u_n - \Amin v_n\right) \right\|_p\\
    &\geq \begin{cases}
	   \lim_{n\to \infty}\|u_n - v_n\|_p= \|u - v\|_p & \mbox{in case } (i),\\
       \lim_{n\to \infty} (1- L\lambda)\|u_n - v_n\|_p= (1- L\lambda)\|u - v\|_p & \mbox{in case } (ii).
	\end{cases} 
\end{align*}
This completes the proof.
\end{proof}

\subsection{Proof of Theorem \ref{th: F1}}\label{proofTh41}

We have now all the ingredients to prove Theorem \ref{th: F1}. 

\begin{proof}[Proof of Theorem \ref{th: F1}]
 If $G$ is finite, then $\Omega_p= \dom_p(\mathcal{A})=C(X)$ and the assertions  in \textnormal{\textbf{(I)}} and \textnormal{\textbf{(II)}} follow from Lemmas~\ref{lem:accretivity_finite_graphp=1} and~\ref{lem:bijective-sign-norm}.

 From now on, let $G$ be infinite.  

We consider an increasing exhaustion of $X$ by finite connected subsets $X_n$ satisfying the additional condition 
\begin{equation}\label{eq:property_sets2}
	\{ x \in X_n \mid x\sim y \mbox{ for some } y \in X_{n+1}\setminus X_n \} \ne \emptyset,
\end{equation}
(see \cite[Lemma A.4 ]{bianchi2022generalized} where the existence of such an exhaustion was proved), and let $\Omega_p$ be the corresponding set defined in Definition~\ref{def:Omega}.

According to Lemma~\ref{lem:L_Omega_accretive}, $\Omega_p$  is dense in $\dom_p(\mathcal A)$ and in $\ell^p(X,\mu)$ and  $\mathcal{A}_{|\Omega_p}$  is either accretive if \textnormal{\ref{F1:monotonicity}} holds or $\omega$-accretive with constant $\omega=L$ if \textnormal{\ref{F2:Lipschitz}} holds.

To prove \textnormal{\textbf{(I)}}, let  $g\in \ell^{p,\pm}(X,\mu)$ be given. We want to prove that  for every $\lambda>0$, respectively, $0< \lambda<1/L$, there exists a unique $u\in \Omega_p$, with $u\geq 0$ if  $g\in \ell^{p,+}(X,\mu)$ or $u\leq 0$ if $g\in \ell^{p,-}(X,\mu)$, satisfying
\begin{equation}\label{u-pm-1}
(i)\quad (\operatorname{id} +\lambda \mathcal{A})u=g \qquad\text{ and } \qquad (ii)\quad \|u\|_p\leq C\|g\|_p
\end{equation}
with $C=1$ or $C=(1-\lambda L)^{-1}$ depending on whether \textnormal{\ref{F1:monotonicity}}
or \textnormal{\ref{F2:Lipschitz}} holds.

We assume first that $g\in \ell^{p,+}(X,\mu)$.

For each $n$, we define the subgraph 
\begin{equation}\label{eq:def_Dir_subgraph}
G_{\textnormal{dir},n} = (X_n, w_n, \kappa_{\textnormal{dir},n}, \mu_{n})\subset G
\end{equation} 
as a Dirichlet subgraph of $G$ (see Definition \ref{def:dir_subgraph}).
This amounts to defining
\begin{itemize}
	\item $w_n \equiv w_{|X_{n}\times X_{n}}$;\
	\item $\mu_n \equiv  \mu_{|X_{n}}$;
	\item for every $x \in X_n$, $b_{\textnormal{dir},n}(x)= \sum_{y \in \mathbullet{\partial}X_n}w(x,y)$;
	\item for every $x \in X_n$, $\kappa_{\textnormal{dir},n}(x)  = \kappa_{|X_n}(x) + b_{\textnormal{dir},n}(x)$.
\end{itemize}
If we define
\begin{align*}
&\mathbullet{\partial}X_{n,n+1}\coloneqq\{ y \in X_{n+1} \setminus X_n \mid \exists x \in X_n \mbox{ such that } x \sim y  \},
\end{align*}
which is not empty by \eqref{eq:property_sets2}, then, for every $x\in X_n$,  
\begin{align*}
\kappa_{\textnormal{dir},n}(x)  = \kappa_{|X_n}(x) + b_{\textnormal{dir},n}(x)
= \kappa_{\textnormal{dir},n+1}(x) +  \sum_{y \in \mathbullet{\partial}X_{n,n+1}}w(x,y).
\end{align*}
So, the collection $\{G_{\textnormal{dir},n}\}_{n\in \N}$ is a sequence of connected finite Dirichlet subgraphs such that each subgraph $G_{\textnormal{dir},n}$ is a Dirichlet subgraph of $G_{\textnormal{dir},n+1}$, that is, $$G_{\textnormal{dir},1}\subset \ldots \subset G_{\textnormal{dir},n}\subset G_{\textnormal{dir},n+1} \subset \ldots \subset G.$$

Denoting by 
\begin{equation*}\label{eq:embedding-projection2}
\boldsymbol{\mathfrak{i}}_{n,n+1} \colon C(X_n) \hookrightarrow C(X_{n+1}), \quad \boldsymbol{\mathfrak{i}}_{n} \colon C(X_{n}) \hookrightarrow C(X), \quad  \boldsymbol{\pi}_n \colon C(X) \to C(X_n)
\end{equation*}
the canonical embeddings and projections, respectively, define
$$
g_n \coloneqq  \boldsymbol{\pi}_{n}g \,.
$$
As in the proof of Lemma \ref{lem:bijective-sign-norm}, we set $\psi(s)=s-\lambda f(s)$. If $f$ satisfies \textnormal{\ref{F1:monotonicity}}, then $\psi$ is strictly monotone increasing and surjective for every $\lambda >0$,  while if $f$ satisfies \textnormal{\ref{F2:Lipschitz}} the same holds for all $0<\lambda <1/L$. For these values of $\lambda$, from the proof of Lemma \ref{lem:bijective-sign-norm} we know that for every $n\in \N$ there exist $\hat{u}_n \in C(X_n)_n$ such that $\hat{u}_{n} \geq 0$  and  
\begin{equation}\label{def:hatv_n}
(\psi + \lambda \Delta_{\textnormal{dir}, n})\hat{u}_{n}  = g_n.
\end{equation}
Setting
$$
q_n(x)\coloneqq ( \psi +\lambda\Delta_{\textnormal{dir},n+1}) \boldsymbol{\mathfrak{i}}_{n,n+1}\hat{u}_{n}(x) \in C(X_{n+1})\,,
$$
by the fact that every $G_{\textnormal{dir},n}$ is a Dirichlet subgraph of $G_{\textnormal{dir}, n+1}$ (see also \cite[Lemma A1]{bianchi2022generalized} for more details), we have
$$
(\psi + \lambda \Delta_{\textnormal{dir}, n+1})\boldsymbol{\mathfrak{i}}_{n,n+1}\hat{u}_{n}(x) =(\psi + \lambda \Delta_{\textnormal{dir}, n})\hat{u}_{n}(x)  \quad \forall x \in X_n,
$$
and
\begin{equation*}
q_n(x)=\begin{cases}
g_n(x) & \mbox{if } x \in X_n,\\
-\frac{\lambda}{\mu(x)}\sum_{y \in X_n}w(x,y)\hat{u}_{n}(y)& \mbox{if } x \in \mathbullet{\partial}X_{n,n+1},\\
0 & \mbox{if } x \in X_{n+1}\setminus (X_n \cup \mathbullet{\partial}X_{n,n+1}).
\end{cases}
\end{equation*}
Since $0\leq \hat{u}_n$ and  $0\leq g_{n+1}$, it follows that $q_n(x) \leq g_{n+1}(x)$ for every $x \in X_{n+1}\setminus X_n$. In particular, from the fact that $g_{n+1}(x)=g_n(x)$ for every $x \in X_n$, we get $q_n\leq g_{n+1}$ on $X_{n+1}$. Therefore,
\begin{equation*}
(\psi + \lambda \Delta_{\textnormal{dir}, n+1})\boldsymbol{\mathfrak{i}}_{n,n+1}\hat{u}_{n}(x) \leq (\psi + \lambda \Delta_{\textnormal{dir}, n+1})\hat{u}_{n+1}(x)  \quad \forall x \in X_{n+1}
\end{equation*}
and by Theorem \ref{thm:min_principle}$ a)$, we have $\boldsymbol{\mathfrak{i}}_{n,n+1}\hat{u}_{n}\leq \hat{u}_{n+1}$.

Furthermore, denoting by $\|\cdot \|_{p,n}$ the restriction of $\|\cdot\|_p$ to $C(X_n)$, if $\lambda$ satisfies the assumptions of Lemma \ref{lem:bijective-sign-norm},  by \eqref{apriori} 
\begin{equation}\label{eq:unifboundu_n}
0\leq  \hat{u}_{n}(x) \mu(x)^{1/p}\leq 
\|\hat{u}_n\|_{p,n} \leq C\|g_n\|_{p,n} \leq C\|g\|_p,
\end{equation}
with $C$  defined in Lemma \ref{lem:bijective-sign-norm}. Hence, for every fixed $x$, $\hat{u}_{n}(x)$ is bounded uniformly in $n$. We now define $$u_n:= {\mathfrak{i}}_{n}\hat{u}_{n}\in C_c(X).$$

Thus, for every fixed $x \in X$, the sequence $\left\{  u_{n}(x)\right\}$ is monotonic and bounded. We can then define   
$$
u(x) \coloneqq \lim_{n\to \infty}  u_{n}(x) \quad \mbox{for } x\in X\label{eq:limit1}.  
$$
By monotone convergence and \eqref{eq:unifboundu_n} it follows that 
\[
\lim_{n\rightarrow{\infty}}\|u_n-u\|_p=0 \,\,\text{ and } 
\|u\|_p\leq C\|g\|_p.
\]
To conclude we need to show that
\begin{equation}
\label{u-prop-2}
 u\in  \Omega_p \qquad \text{ and } \qquad   (id+\lambda \mathcal{A})u=g.
\end{equation}
Now, by construction, for every $n$, $\operatorname{supp}u_n\subseteq X_n$ and  
\begin{align*}
(\boldsymbol{\mathfrak{i}}_{n}\operatorname{id}_n\boldsymbol{\pi}_{n}   + \lambda \mathcal A_n)     u_n =  \boldsymbol{\mathfrak{i}}_{n}(\operatorname{id}_n+\lambda F+\lambda \Delta_{dir,n} ) \boldsymbol{\pi}_n u_{n}=
	   \boldsymbol{\mathfrak{i}}_{n}  g_n,
\end{align*}
and, since $g\in \ell^p(X,\mu)$, $\|\mathfrak{i}_{n} g_n- g\|_p\to 0$, so that
\[
\|(\boldsymbol{\mathfrak{i}}_{n}\operatorname{id}_n\boldsymbol{\pi}_{n}   + \lambda \mathcal A_n)     u_n - g\|_p\to 0.
\]
Next,  $u_n \in C_c(X) \subseteq \dom\left(\Delta\right)$ for every $n$, so $\Delta u_{n}$ is well-defined. Furthermore, for every $x \in X$,
\begin{equation}\label{Monotone/Dominate_convergence}
  \sum_{y\in X}w(x,y) u(y)=
\sum_{y\in X} \lim_{n\to \infty}w(x,y) u_{n}(y)= \lim_{n\to \infty}\sum_{y\in X}w(x,y) u_{n}(y)  
\end{equation}
by monotone convergence. Then, recalling that every $G_{\textnormal{dir},n}$ is a Dirichlet subgraph of $G$ for every $n\in \N$, we get 
$$
\begin{aligned}
&\phantom{\,\,=\,\,\,}u(x)+\lambda Fu(x)+\frac{\lambda}{\mu(x)} \left( \deg(x)u(x)-\sum_{y\in X} w(x,y)u(y) \right)\\&=
\lim_{n\to \infty} \left( u_n(x)+\lambda Fu_n(x)+\frac{\lambda}{\mu(x)}\left( \deg(x)u_n(x) - \sum_{y\in X}w(x,y)u_n(y) \right) \right)\\
&=\lim_{n\to \infty}  g_n (x)= g(x),
\end{aligned}
$$
where we used the continuity of $F$.
Since $u(x)$ and $g(x)$ are finite, from the above chain of equalities, we deduce   that 
$
\sum_{y\in X} w(x,y)u(y) <+\infty
$ 
and therefore $u$ belongs to the formal domain of $\Delta$,  and therefore of $\mathcal{A}$, and 
\[
(F + \Delta) u = \lambda^{-1}(g- u).
\]
It follows that $(F+\Delta) u\in \ell^p(X,\mu)$ so that $u\in \dom_p(\mathcal{A})$ and satisfies 
\[
(\operatorname{id} + \lambda \mathcal{A})u=g.
\]
Finally, 
\begin{align}\label{eq:thm_existence_Alb}
 	\|\mathcal A_n u_n - \mathcal{A}u\|_p & \leq \frac{1}{\lambda} \left( \|(\boldsymbol{\mathfrak{i}}_{n}\operatorname{id}_n\boldsymbol{\pi}_n + \lambda\mathcal A_n)u_n - g\|_p +  \| \boldsymbol{\mathfrak{i}}_{n}\operatorname{id}_n\boldsymbol{\pi}_n u_n - u \|_p \right) \nonumber \\
 	&= \frac{1}{\lambda} \left( \|(\boldsymbol{\mathfrak{i}}_{n}\operatorname{id}_n\boldsymbol{\pi}_n + \lambda\mathcal A_n)u_n - g\|_p +  \|  u_n - u \|_p \right)\to 0,
 \end{align}
as required to show that $u\in \Omega_p$. The uniqueness statement now follows from the established accretivity/$\omega$-accretivity of $\mathcal{A}_{|\Omega_p}$, while the nonnegativity of $u$ follows by the nonnegativity of $u_n$.

 If $g\in \ell^{p,-}(X,\mu)$, then the proof arguments are entirely symmetric and the nonpositive solution $u$ can be built as a monotone decreasing limit. This concludes the proof of \textnormal{\textbf{(I)}}.

Now we prove \textnormal{\textbf{(II)}}. Given $g\in \ell^p(X,\mu)$, we let
\begin{equation}\label{pm}
	g_n \coloneqq \boldsymbol{\pi}_{n}g,\quad
	g^+_n\coloneqq \max\{0, \; g_n \},\quad
	g^-_n\coloneqq \min\{0, \; g_n \}.
\end{equation} 
From Lemma \ref{lem:bijective-sign-norm}, there exist $\hat{u}_{n}, \hat{u}^+_{n}, \hat{u}^-_{n}\in C(X_n)$ that satisfy
\begin{equation}\label{eq:Laccretive}
	\begin{cases}
		(\psi + \lambda\Delta_{\textnormal{dir},n})\hat{u}_{n} = g_n,\\
		(\psi +\lambda\Delta_{\textnormal{dir},n})\hat{u}^+_{n}  = g^+_n,\\
		(\psi +\lambda\Delta_{\textnormal{dir},n})\hat{u}^-_{n}  = g^-_n.
	\end{cases}
\end{equation}
Thus, having set
\begin{equation}\label{eq:def_u_n}
 u_n \coloneqq \mathbf{\mathfrak{i}}_{n}\hat{u}_n\quad  u_n^+ \coloneqq \mathbf{\mathfrak{i}}_{n}\hat{u}_n^+\quad  u_n^- \coloneqq \mathbf{\mathfrak{i}}_{n}\hat{u}_n^-,
\end{equation}
by construction
\begin{align*}
(\boldsymbol{\mathfrak{i}}_{n}\operatorname{id}_n\boldsymbol{\pi}_{n}   + \lambda \mathcal A_n )u_n 
	&= \boldsymbol{\mathfrak{i}}_{n}  g_n
\end{align*}
and therefore
$$
\lim_{n\to \infty}\|\left(\boldsymbol{\mathfrak{i}}_{n}\operatorname{id}_n\boldsymbol{\pi}_{n} + \lambda \mathcal A_n\right) u_n- g\|=0.
$$
Note that, by  
Corollary \ref{cor:min} and monotone limits, it holds that
\begin{align*}
	& u_n(x) = \hat{u}_n(x)\leq  \hat{u}^+_n(x)\leq u^+(x) \quad \mbox{if } x\in X_n,\\
	& u_n(x) = 0\leq u^+(x) \quad \mbox{if } x\notin X_n,
\end{align*}
and
\begin{align*}
	&  u^-(x)\leq \hat{u}^-_n(x)\leq \hat{u}_n(x)=u_n(x)    \quad \mbox{if } x\in X_n,\\
	& u^-(x)\leq 0= u_n(x) \quad \mbox{if } x\notin X_n.
\end{align*}
In particular, 
$$
u^-(x)\leq  u_n(x)\leq  u^+(x), \quad \forall\, x\in X,\;\forall\, n\in \N.
$$
Since and $u^\pm$ satisfy the conclusions of part \textnormal{\textbf{(I)}} with $g^{\pm}$ instead of $g$,  
  for every fixed $x\in X$ and every $n$, 
\begin{equation}\label{eq:uniform_bound}
	|u_n(x)| \leq 
    \max\left\{|u^-(x)|, |u^+(x)|\right\}<\infty, 
\end{equation}
and, for every $x$, the sequence $\{u_n(x)\}$ is  
bounded. 

Thus, by  a diagonal argument, there exists a  subsequence $\{u_{n'}\}$  such that 
\begin{align*}
	u(x)&\coloneqq \lim_{n'\to \infty} u_{n'}(x)\qquad x\in X
\end{align*}
 is well-defined and satisfies 
\begin{equation}\label{eq:uniform_bound2}
	|u(x)| \leq 
    \max\left\{|u^-(x)|, |u^+(x)|\right\}<\infty. 
\end{equation}
Thus,  
\[
\|u\|_p\leq \|u^+\|_p+\|u^-\|_p\leq 
C(\|g^+\|_p+\|g^-\|_p)
\leq 2^{(p-1)/p}C \|g\|_p,
\]
with $C$ as  in Lemma \ref{lem:bijective-sign-norm}, where the last  inequality follows from 
\[
(\|g^+\|_p + \|g^-\|_p)^p\leq 2^{p-1} (
\|g^+\|_p^p + \|g^-\|^p_p) = 2^{p-1} \|g\|^p_p.
\]
Moreover, since  $u^+,u^- $ 
 belong to $\dom_p(\mathcal{A})$ and 
\[
|u_{n'}(x)|\leq |u^+(x)|+|u^-(x)| \quad \quad \forall x \in X
\]
 by dominated convergence we have
\[
\sum_{y\in X} w(x,y)|u(y)|=\lim_{n'}\sum_{y\in X} w(x,y)|u_{n'}(y)|
\leq \sum_{y\in X} w(x,y) (|u^+(y)|+|u^-(y)|)<+\infty,
\] 
showing that $u$ belongs to $\dom(\Delta)$, and then, 
 arguing as in case \textnormal{\textbf{(I)}} (using dominated convergence instead of monotone convergence
 ),
 we deduce that 
 \begin{equation}
 \begin{split}
\label{eq:properties-$u_{n'}$}
& (i) \,\, \|u-u_{n'}\|_p\to 0 \qquad (ii)\,\, u\in \dom_p(\mathcal{A}) \\
& (iii) \,\, (\operatorname{id} +\lambda \mathcal{A})u =g  \qquad (iv)\,\,  \|\mathcal{A}u- \mathcal{A}_{n'} u_{n'}\|_p\to 0.
\end{split}
\end{equation}
\par
We note explicitly that   $(ii)$--$(iv)$ follow from $(i)$ and from the properties of $\{u_n\}$.
In particular, the above argument shows that for every fixed 
$\lambda>0$, respectively  $\lambda\in(0,1/L)$, 
depending on whether \textnormal{\ref{F1:monotonicity}}  or  \textnormal{\ref{F2:Lipschitz}} holds, and for every $g\in\ell^p(X,\mu)$, there exists a sequence $u_n\in C_c(X)$ with $\operatorname{supp} u_n\subseteq X_n$ such that, from every subsequence $\{u_{n'}\}$ of $\{u_n\}$ there exist a  sub-subsequence  $\{u_{n''}\}$ and a function $u$ for which
\eqref{eq:properties-$u_{n'}$} (i)--(iv) hold.

In order to complete the proof we need to show that the limit function $u$ is independent of the sub-subsequence $\{u_{n''}\}$ so that the original sequence $\{u_n\}$ converges to $u$ and therefore satisfies \eqref{eq:properties-$u_{n'}$} (i)--(iv).

\vspace{0.2cm}
\noindent\textbf{Case \textnormal{\ref{IP}}}.

According to  Corollary \ref{cor:min}, $\operatorname{id} +\lambda\mathcal A$ is injective on $\dom_p(\mathcal{A})$. This implies that all sub-subsequences of $\{u_n\}$ must converge to the same limit $u$. Thus the whole sequence $\{u_n\}$ converges to $u\in \dom_p(\mathcal{A})$ and  
\[
\|u_{n}-u\|_p\to 0, \qquad
\|\mathcal{A}u- \mathcal{A}_nu_n\|_p\to 0,
\]
so that $u\in \Omega_p$ by definition. Finally, since $\operatorname{id} + \lambda \mathcal{A}$ is 
surjective from $\Omega_p$ onto $\ell^p(X,\mu)$ and injective on its domain, for $\lambda$ in the appropriate range,  we conclude that $\Omega_p=\dom_p(\mathcal{A})$.
\vspace{0.2cm}

\vspace{0.2cm}
\noindent\textbf{Case \textnormal{\ref{B}}.} %

Let us write $\Delta_0$ for the graph Laplacian without killing term. Since \textnormal{\ref{B}} holds, $\Delta_0$ is bounded on $\ell^p(X,\mu)$ for every $1\leq p< \infty,$ and 
$\dom_p(\mathcal{A})=\{u\in \ell^p(X,\mu)\,:\, (F + \kappa/\mu) u\in \ell^p(X,\mu)\}$. 
Thus, if $\{X_n\}$ is any exhaustion of $X$ by finite sets, given $u\in \ell^p(X,\mu)$, $1\leq p<\infty$, and having set $u_n=u\mathbf{1}_{X_n}$
we have 
\[
\|u-u_n\|_p\to 0, \qquad \|\Delta_0 u-\Delta_0 u_n\|_p\to 0.
\]
Thus, applying Lemma~\ref{lem:positivity} yields
\[
\sum_{x\in X}\Delta_0 u(x)|u(x)|^{p-1}(\operatorname{sgn} u(x)) \mu(x) = \lim_n 
\sum_{x\in X}\Delta_0 u_n(x)|u_n(x)|^{p-1}(\operatorname{sgn} u_n(x)) \mu(x)\geq 0, 
\]
and therefore $\Delta_0$ is accretive on $\ell^p(X,\mu)$ and $\Delta$ is accretive on its domain.

Now, if $f$ satisfies condition \textnormal{\ref{F1:monotonicity}}, the monotonicity of $f$, arguing as in the proof of Lemma \ref{lem:accretivity_finite_graphp=1}, shows that, for every $u,v\in \dom_p(\mathcal{A})$
\[
\sum_{x\in X} [(F + \kappa(x)/\mu(x))u(x)-(F + \kappa(x)/\mu(x))v(x)] |u(x)-v(x)|^{p-1}\operatorname{sgn}(u(x)-v(x))\mu(x) \geq 0,
\]
and therefore $\mathcal{A}$ is accretive on its domain, hence $\operatorname{id}+\lambda\mathcal A$ is injective for all $\lambda>0$. 

If condition \textnormal{\ref{F2:Lipschitz}} holds, then $F$ maps $\ell^p(X,\mu)$ into itself, so that $\dom_p(\mathcal{A})=\dom_p(\Delta)$. Since $s\mapsto  Ls-f(s)$ is monotonically increasing, the above computation shows that 
\[
\sum_{x\in X} [(Fu(x)-Fv(x))+ L(u(x)-v(x) )]|u(x)-v(x)|^{p-1}\operatorname{sgn}(u(x)-v(x))\geq 0,
\]
and therefore $\mathcal{A}$ is $\omega=L$-accretive on $\dom_p(\mathcal{A})=\dom_p(\Delta)$.
According to \eqref{eq:omega-accretivity-1}
\[
\|(\operatorname{id}+\lambda\mathcal{A})u-(\operatorname{id}+\lambda\mathcal{A})v
\|_p\geq (1-L\lambda)\|u-v\|_p,
\]
for every $u,v\in\ell^p(X,\mu)$
and we deduce that $\operatorname{id}+\lambda \mathcal{A}$
is injective for $\lambda<L^{-1}$.

In both cases the proof is concluded as in  \textnormal{\ref{IP}}.
\end{proof}

\section{Proofs of the main results}\label{s:proofs}

 \textbf{Proof of Theorem \ref{thm:main2}.} Let $1\leq p<\infty$. From Theorem \ref{th: F1} there exists a dense subset $\Omega_p \subseteq \dom_p(\mathcal{A})$ such that the operator $\mathcal{\mathcal A}_{|\Omega_p}$ is either accretive or $L-$accretive for some $L>0$ and such that for every $\epsilon>0$ there exists an $\epsilon$-approximate solution as in Definition \ref{epsilon-approximation}. Then, the existence and uniqueness of mild solutions for \ref{model_problem} for every $u_0\in \overline{\Omega}_p=\ell^p(X,\mu)$ (as shown in Lemma \ref{cor:FC}) readily follows from \cite[Theorem 4.1]{barbu2010nonlinear} which is an extension of the Crandall-Liggett theorem, see \cite{crandall1971generation}.  $\Box$

 \textbf{Proof of Theorem \ref{thm:regularity}.}
In view of the $\omega$- accretivity of $\mathcal A$ (with $\omega=0$ or $\omega=L$) shown in Theorem \ref{th: F1}, the statement of Theorem \ref{thm:regularity} is a straightforward consequence of \cite[Theorems 4.5 and 4.6]{barbu2010nonlinear}, see also \cite[Remark 4.1]{barbu2010nonlinear}. We observe that the theorems just mentioned hold in reflexive and uniformly convex spaces, hence the assumption $p>1$.
$\Box$

 \textbf{Proof of Theorem \ref{thm:regularity2}.}
The proof of Theorem \ref{thm:regularity2} relies on the application of the following general result (given, e.g., in \cite[Theorem 1.6]{benilan1988evolution}). 

\begin{proposition}\label{Prop-strongclassical}
Let $h\in L^1_{\rm loc}([0,T];\ell^p(X,\mu))$, $1\leq p<\infty$, ${\rm{dom}}_p(\mathcal A)$ be closed and $\mathcal A$ continuous on its domain. If $u$ is a mild solution on $(0,T)$ of \eqref{model_problem3}, then $u$ is a strong solution and 
$$
u(t)=u(0)-\int_0^t\mathcal Au(s)ds+\int_0^th(s)ds\qquad \forall t\in (0,T).
$$
Moreover, if $h\in C([0,T];\ell^p(X,\mu))$, then $u$ is a classical solution. 
\end{proposition}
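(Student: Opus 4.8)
The plan is to derive Proposition~\ref{Prop-strongclassical} directly from the $\epsilon$-discretization defining a mild solution, by rewriting the implicit Euler scheme \eqref{implicit_Euler} in integral form and letting the mesh go to zero. First I would reduce to $T<\infty$ (the case $T=+\infty$ follows by restriction to compact subintervals), pick $\epsilon_m\downarrow 0$, and for each $m$ choose an $\epsilon_m$-discretization $\mathcal{D}_{\epsilon_m}=(\mathcal{T}_{N_m},\boldsymbol{h}_{N_m})$ together with an $\epsilon_m$-approximate solution $u_{\epsilon_m}$ such that $\|u(t)-u_{\epsilon_m}(t)\|_p\to 0$ uniformly (Definition~\ref{def:weak_solution}). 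Rewriting \eqref{implicit_Euler} as $u_k-u_{k-1}=\lambda_k(h_k-\mathcal{A}u_k)$, summing, and using that $u_{\epsilon_m}$ is the piecewise-constant interpolant with value $u_k$ on $(t_{k-1},t_k]$ (so that $\int_0^{t_j}\mathcal{A}u_{\epsilon_m}(s)\,ds=\sum_{k\leq j}\lambda_k\mathcal{A}u_k$), one obtains, for any $t$ that is a partition point of $\mathcal{T}_{N_m}$ (and, by density of such $t$ together with the continuity of both sides of the target identity, it suffices to treat these),
\[
u_{\epsilon_m}(t)=u_0+\sum_{k:\,t_k\leq t}\lambda_k h_k-\int_0^t\mathcal{A}u_{\epsilon_m}(s)\,ds .
\]
By the second condition in Definition~\ref{def:epsilon-discretization} one has $\big\|\sum_{k:\,t_k\leq t}\lambda_k h_k-\int_0^t h(s)\,ds\big\|_p\leq\epsilon_m$, so the forcing term passes to the limit for free.

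The core step is the passage to the limit $m\to\infty$ in the displayed identity. Uniform convergence gives $u_{\epsilon_m}(t)\to u(t)$ pointwise in $t$; since each $u_{\epsilon_m}(s)$ lies in $\dom_p(\mathcal{A})$ and that set is closed by hypothesis, $u(s)\in\dom_p(\mathcal{A})$, so $\mathcal{A}u(s)$ is defined, and continuity of $\mathcal{A}$ on its domain yields $\mathcal{A}u_{\epsilon_m}(s)\to\mathcal{A}u(s)$ for each $s$. The delicate point is to justify $\int_0^t\mathcal{A}u_{\epsilon_m}(s)\,ds\to\int_0^t\mathcal{A}u(s)\,ds$: the $u_{\epsilon_m}$ take values in a fixed bounded subset of $\ell^p(X,\mu)$ (being uniformly close to the compact set $u([0,T])$), so if $\mathcal{A}$ is bounded on bounded sets — which in the applications below is automatic, since $\mathcal{A}=F+\Delta$ is globally Lipschitz — then dominated convergence applies directly; in the fully general setting one argues instead via equi-integrability of $\{\mathcal{A}u_{\epsilon_m}\}$, as in \cite[Theorem~1.6]{benilan1988evolution}. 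This produces the integral identity
\[
u(t)=u(0)-\int_0^t\mathcal{A}u(s)\,ds+\int_0^t h(s)\,ds,\qquad t\in(0,T).
\]

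From here the regularity is immediate. The map $s\mapsto\mathcal{A}u(s)$ is continuous, being the composition of the continuous $\mathcal{A}$ (on its domain) with the continuous $u$, and $h\in L^1_{\textnormal{loc}}$, so the right-hand side above is absolutely continuous; hence $u\in W^{1,1}_{\textnormal{loc}}((0,T);\ell^p(X,\mu))$ and $\partial_t u+\mathcal{A}u=h$ for a.e. $t$, i.e. $u$ is a strong solution. If in addition $h\in C([0,T];\ell^p(X,\mu))$, the integrand $s\mapsto h(s)-\mathcal{A}u(s)$ is continuous, so $u\in C^1$ and the equation holds for every $t$: $u$ is classical.

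I expect the main obstacle to be exactly the passage of the limit under the integral sign — controlling $\mathcal{A}u_{\epsilon_m}$ uniformly in $m$ — which is where the assumptions that $\mathcal{A}$ be continuous and $\dom_p(\mathcal{A})$ be closed do the real work. As a closing remark, Theorem~\ref{thm:regularity2} then follows by checking these hypotheses under its standing assumption $\sup_{x\in X}\frac{\sum_{y\in X}w(x,y)+\kappa(x)}{\mu(x)}<\infty$: this makes $\Delta$ bounded on $\ell^p(X,\mu)$ for every $1\leq p<\infty$ (the argument is the one used in Case~\ref{B} of the proof of Theorem~\ref{th: F1}), so with $f$ uniformly Lipschitz (\ref{F2:Lipschitz}) the operator $\mathcal{A}=-f+\Delta$ is globally Lipschitz and $\dom_p(\mathcal{A})=\ell^p(X,\mu)$ is closed; the Proposition then yields the strong-solution identity and, for continuous $h$, the classical regularity.
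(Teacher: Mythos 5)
Your proposal is essentially correct, but note that the paper does not prove this proposition at all: it is quoted as a known general result and attributed to \cite[Theorem~1.6]{benilan1988evolution}, the surrounding text only verifying its hypotheses in order to deduce Theorem~\ref{thm:regularity2}. What you have written is, in outline, the standard proof of that cited theorem, so in effect you supply the argument the paper outsources; this is a legitimate and more self-contained route. Two points deserve tightening. First, continuity of $\mathcal{A}$ on its (closed) domain does not, in an infinite-dimensional space, imply that $\mathcal{A}$ is bounded on bounded sets, so that hypothesis should not be invoked, and deferring the general case back to \cite[Theorem~1.6]{benilan1988evolution} is circular since that is the statement being proved. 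The correct observation is that the values $u_{\epsilon_m}(s)$ all lie in a \emph{compact} subset $K$ of $\dom_p(\mathcal{A})$: take $K$ to be the union of $u([0,T])$ with the ranges of the $u_{\epsilon_m}$ (each a finite set), which is sequentially compact because the $u_{\epsilon_m}$ converge uniformly to the continuous $u$, and is contained in $\dom_p(\mathcal{A})$ by closedness. A continuous map is bounded on a compact set, which yields $\sup_{m,s}\|\mathcal{A}u_{\epsilon_m}(s)\|_p<\infty$ and lets dominated convergence go through using only the stated hypotheses. Second, the partition points of $\mathcal{T}_{N_m}$ vary with $m$, so ``density of such $t$'' does not by itself allow passage to the limit at a fixed $t$: instead, for fixed $t$ take the largest partition point $t_{j(m)}\leq t$ of the $m$-th discretization, note $t-t_{j(m)}\leq\epsilon_m$, use the summed identity at $t_{j(m)}$, and absorb the discrepancies $\|u_{\epsilon_m}(t_{j(m)})-u(t)\|_p$ and the tail integrals over $[t_{j(m)},t]$ using the uniform bound just obtained together with the continuity of $u$. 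With these two repairs the integral identity, and hence the strong and classical regularity, follow exactly as you describe.
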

Proposition \ref{Prop-strongclassical} applies in the case when $f$ satisfies condition~\ref{F2:Lipschitz} and $\sup_{x\in X}\frac{\sum_{y\in X}w(x,y)+\kappa(x)}{\mu(x)}<\infty$. Indeed, it is known \cite[Theorem 2.15]{keller2021graphs} that if $\sup_{x\in X}\frac{\sum_{y\in X}w(x,y)+\kappa(x)}{\mu(x)}<\infty$, then $\Delta$ is continuous on $\ell^p(X,\mu)$. Moreover, by condition~\ref{F2:Lipschitz} if $u\in \ell^p(X,\mu)$, then $\sum_{x\in X}|F(u(x))|^p\mu(x)\leq L^p \sum_{x\in X}|u(x)|^p\mu(x)$ and if $\{u_n\}$ is convergent to $u$ in $\ell^p(X,\mu)$, then 
$$
\sum_{x\in X}|F(u_n(x))-F(u(x))|^p\mu(x)\leq L^p \sum_{x\in X}|u_n(x)-u(x)|^p\mu(x).
$$
Hence ${\rm{dom}}_p(\mathcal A)=\ell^p(X,\mu)$ and $\mathcal A$ is continuous.  Then, the conclusions of Proposition \ref{Prop-strongclassical} hold. $\Box$

\section{Parabolic comparison and qualitative  properties in a dissipative case}\label{s: specificnonlinearity}

In this section we first prove a parabolic comparison principle for mild solutions of
\ref{model_problem}, and then we specialize to dissipative terms of the form $f(u)=-u$ and $f(u)=-u|u|^{q-1}$, $q\in(0,1)\cup(1,+\infty)$. In the linear case $f(u)=-u$ we recover an explicit representation of the unique mild solution in terms of the semigroup generated by $\Delta+\operatorname{id}$ and deduce standard $\ell^p$-decay estimates. In the nonlinear case we  construct suitable one-dimensional barriers. This yields global-in-time upper bounds, positivity of solutions for $q>1$, and finite-time extinction when $q\in(0,1)$.

\subsection{Parabolic comparison}

\begin{theorem}\label{thm:parabolic-comparison-infinite}
Let $G$ be a connected graph satisfying  \textnormal{\ref{IP}}. Fix $1\le p<\infty$, and assume \eqref{eq:Delta(C_c)} if $p>1$. Assume either \textnormal{\ref{F1:monotonicity}}   or \textnormal{\ref{F2:Lipschitz}}. Let $h,g\in L^1_{\mathrm{loc}}([0,T];\ell^p(X,\mu))$ and $u_0,v_0\in\ell^p(X,\mu)$, and let $u,v\in C([0,T];\ell^p(X,\mu))$ be the unique mild solutions of the two problems:
\[
\begin{cases}
    \partial_t u + \mathcal{A} u = h &\quad  \text{ in } (0,T)\times X\\
    u(0)=u_0 &\quad  \text{ in }  X
\end{cases}
\qquad \text{and} \qquad
\begin{cases}
    \partial_t v + \mathcal{A} v = g & \quad  \text{ in } (0,T)\times X\\
    v(0)=v_0 & \quad  \text{ in }  X.
\end{cases}
\]
If $u_0\le v_0$ pointwise on $X$ and $h\leq g$  on $[0,T]\times X$, then
\[
u(t)\leq v(t) \qquad \text{pointwise on $X$ for all $t\in [0, T]$.}
\]

\end{theorem}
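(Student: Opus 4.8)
The plan is to transfer the elliptic comparison principle of Corollary~\ref{cor:min} to the time-discrete level and then pass to the limit. First I would fix $\epsilon>0$ small (in particular $\epsilon<1/L$ if \ref{F2:Lipschitz} holds) and choose a \emph{single} partition $0=t_0<t_1<\dots<t_N\le T$ with $t_k-t_{k-1}\le\epsilon$ and $T-t_N\le\epsilon$, writing $\lambda_k:=t_k-t_{k-1}$. As discretizations of $h$ and $g$ I would take the $\ell^p$-valued averages $h_k:=\lambda_k^{-1}\int_{t_{k-1}}^{t_k}h(s)\,ds$ and $g_k:=\lambda_k^{-1}\int_{t_{k-1}}^{t_k}g(s)\,ds$, which belong to $\ell^p(X,\mu)$ since $h,g\in L^1_{\mathrm{loc}}([0,T];\ell^p(X,\mu))$ and which, for a fine enough partition, turn $(\mathcal T_N,\boldsymbol h_N)$ and $(\mathcal T_N,\boldsymbol g_N)$ into genuine $\epsilon$-discretizations in the sense of Definition~\ref{def:epsilon-discretization} (standard approximation of Bochner-integrable maps by averages over a refining family of partitions). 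The decisive feature of this choice is that $h\le g$ pointwise on $[0,T]\times X$ forces $h_k\le g_k$ pointwise on $X$ for every $k$.

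Next I would run the two implicit-Euler schemes \eqref{implicit_Euler} on this common partition, producing iterates $\{u_k\}_{k=1}^N$ and $\{v_k\}_{k=1}^N$; these exist with $u_k,v_k\in\dom_p(\mathcal A)$ because $G$ satisfies~\ref{IP}, so that $\operatorname{id}+\lambda_k\mathcal A$ maps $\dom_p(\mathcal A)$ bijectively onto $\ell^p(X,\mu)$ by Theorem~\ref{th: F1}\textbf{(II)} (for $\lambda_k$ in the admissible range) and is injective by Corollary~\ref{cor:min}. Then I would prove $u_k\le v_k$ on $X$ for all $k$ by induction: the case $k=0$ is the hypothesis $u_0\le v_0$, and if $u_{k-1}\le v_{k-1}$ then $u_{k-1}+\lambda_k h_k\le v_{k-1}+\lambda_k g_k$ on $X$; since $(\operatorname{id}+\lambda_k\mathcal A)u_k=u_{k-1}+\lambda_k h_k$, $(\operatorname{id}+\lambda_k\mathcal A)v_k=v_{k-1}+\lambda_k g_k$, and $u_k-v_k\in\ell^p(X,\mu)$ (so that $\sum_n|(u_k-v_k)(x_n)|^p\mu(x_n)<\infty$ along every infinite path $\{x_n\}_n$), hypothesis \textbf{b)} of Theorem~\ref{thm:min_principle} is satisfied for $u_k-v_k$ and Corollary~\ref{cor:min} gives $u_k\le v_k$. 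Hence the $\epsilon$-approximate solutions satisfy $u_\epsilon(t)\le v_\epsilon(t)$ pointwise on $X$ for every $t\in[0,t_N]$.

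Finally I would let $\epsilon\to0$. By the quantitative convergence in Theorem~\ref{thm:main2} (estimate \eqref{uniform_limit}) one has $\sup_{t\in[0,T-\epsilon]}\|u(t)-u_\epsilon(t)\|_p\le\delta(\epsilon)\to0$ and likewise for $v$. Since for each fixed $x$ the evaluation functional is continuous on $\ell^p(X,\mu)$ (because $|w(x)|\le\mu(x)^{-1/p}\|w\|_p$), it follows that $u_\epsilon(t,x)\to u(t,x)$ and $v_\epsilon(t,x)\to v(t,x)$ for every $t\in[0,T)$ and $x\in X$; passing to the limit in $u_\epsilon(t,x)\le v_\epsilon(t,x)$ yields $u(t,x)\le v(t,x)$ on $[0,T)\times X$, and then on all of $[0,T]\times X$ by continuity of $t\mapsto u(t,x)$ and $t\mapsto v(t,x)$.

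The main obstacle I expect is the bookkeeping around Corollary~\ref{cor:min}: one must make sure the elliptic comparison genuinely applies to the Euler iterates, namely that their pairwise differences satisfy the path-summability hypothesis \textbf{b)} of Theorem~\ref{thm:min_principle} — this is precisely where the standing assumption \ref{IP} together with $u_k,v_k\in\ell^p(X,\mu)$ is used — and, on the approximation side, checking that the two discrete forcing sequences $\boldsymbol h_N$, $\boldsymbol g_N$ can be built on one and the same partition while both remaining valid $\epsilon$-discretizations and preserving the ordering $h_k\le g_k$.
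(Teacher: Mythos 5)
Your proposal is correct and follows essentially the same route as the paper: discretize both problems on a common partition with ordered forcing terms, use the order-preserving resolvent (i.e.\ Corollary~\ref{cor:min}, whose hypothesis \textbf{b)} is met because the iterates lie in $\ell^p(X,\mu)$ and $G$ satisfies \textnormal{\ref{IP}}) to propagate $u_k\le v_k$ by induction, and pass to the limit using the uniform convergence of the $\epsilon$-approximate solutions and the continuity of point evaluations on $\ell^p(X,\mu)$. The only cosmetic difference is your choice of $h_k,g_k$ as interval averages rather than point values $h(t_k,\cdot)$, which changes nothing since both preserve the ordering.
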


\begin{proof}
By Theorem~\ref{th: F1}   the resolvent $J_\lambda:=(\mathrm{id}+\lambda \mathcal{A})^{-1}$  is well-defined on $\Omega_p$, dense in $\mathrm{dom}_p(\mathcal{A})$, and by  Corollary~\ref{cor:min} is order-preserving. Fix $\epsilon>0$. By~\cite[Lemma 4.1]{evans1977nonlinear} there exists an $\epsilon$–discretization $\mathcal{D}_\epsilon$ as in Definition~\ref{def:epsilon-discretization}, with $\lambda_k:=t_k-t_{k-1}\le\epsilon$ and $h_k= h(t_k, x)$. Define the implicit scheme for $\boldsymbol{u}_\epsilon=\{u_k\}_{k=1}^N$ by
\[
(\mathrm{id}+\lambda_k \mathcal{A})u_k = u_{k-1}+ \lambda_k h_k,\qquad u_0=u_0,
\]
and similarly for $\boldsymbol{v}_\epsilon=\{v_k\}_{k=1}^N$ with $g_k$. 

Since $u_{k-1}\le v_{k-1}$ and $h_k\leq g_k$ imply
$u_{k-1}+\lambda_k h_k\leq v_{k-1}+\lambda_k g_k$, then
\[
u_k = J_{\lambda_k}(u_{k-1}+\lambda_k h_k)  \leq J_{\lambda_k}(v_{k-1}+\lambda_k g_k)=v_k,
\]
by induction for all $k$.

Let $u_\epsilon(\cdot)$, $v_\epsilon(\cdot)$ be the associated piecewise-constant interpolants as per equation~\eqref{epsilon_approximation}. By Theorem~\ref{thm:main2}, $u_\epsilon(t)\to u(t)$ and $v_\epsilon(t)\to v(t)$  in $\ell^p(X,\mu)$ for all $t\in [0,T)$ as $\epsilon\to 0$. The pointwise inequality is preserved in the limit, yielding, by continuity, $u(t)\le v(t)$ for all $t\in[0,T]$. 
\end{proof}

\subsection{The linear case $f(u)=-u$}
In this case ${\mathcal A}=\Delta+\operatorname{id}$ is linear and Problem \eqref{model_problem3} reads
\begin{equation}\label{model_problem_LIN}
	\begin{cases}
		\partial_t u + \Delta u + u= h  &  \quad  \text{in } (0,T)\times X,\\
		u(0) = u_0 & \quad \text{in } X\,.
	\end{cases}
\end{equation}
Let $1\leq p <\infty$. Furthermore, if $p>1$ assume that condition~\eqref{eq:Delta(C_c)} holds. By Theorem \ref{th: F1}, the operator $\mathcal{\mathcal A}_{|\Omega_p}$ is $m$-accretive and $\Omega_p$ is dense in $\dom_p(\mathcal{A})$.  

Therefore, all assumptions of \cite[Theorem 5.7]{benilan1988evolution} are satisfied, from which, for all $h \in L^1_{\rm{loc}}([0,T] ; \ell^p\left(X,\mu\right))$ and  $u_0 \in \ell^p\left(X,\mu\right)$, Problem  \eqref{model_problem_LIN} admits a unique mild solution on $[0,T]$ given by 
 $$u(t)=e^{-t(\Delta+\operatorname{id})}u_0+\int_0^t e^{-(t-s)(\Delta+\operatorname{id})} h(s)\,ds\,.$$
Finally, as a consequence of the $\ell^p(X,\mu)$-contractivity of the semigroup associated with $\Delta$ (see \cite[Theorem 2.9]{keller2021graphs}) $u$ satisfies 
 $$
\|u(t)\|_p \leq e^{-t}\|u_0\|_p + \int_0^t e^{-(t-s)}\|h(s)\|_p\, ds.
$$

\subsection{The case $f(u) = -u|u|^{q-1}$ for $q\in(0,1) \cup(1,+\infty)$}\label{ssec:extintion_time}
In this subsection, we discuss the positivity and finite-time extinction  of the mild solution to Problem \eqref{model_problem3} given by Theorem \ref{thm:main2} in the particular case where $f(u) = -u|u|^{q-1}$ with $q\in(0,1) \cup(1,+\infty)$ and $h=0$. Namely, we consider the problem 
\begin{equation}\label{model_problem_q}
\begin{cases}
    \partial_t u + \Delta u + |u|^{q-1}u=0, &  \quad  \text{in } (0,T)\times X,\\
    u(0)=u_0 & \quad \text{in } X.
\end{cases}
\end{equation}

The next theorem  generalizes the  results proved in \cite{chung2011extinction} for finite graphs to infinite graphs and to the more general setting of mild-solutions.
\begin{theorem}\label{thm:extinction-infinite}
Let $G$ be a connected graph. Fix $1\le p<\infty$, and assume \eqref{eq:Delta(C_c)} if $p>1$. For nonnegative $0\leq u_0\in \ell^p(X,\mu)\cap \ell^\infty(X,\mu)$, let $u$ be the unique mild solution to \eqref{model_problem_q} with $q\in(0,1) \cup(1,+\infty)$.

Set $M:=\|u_0\|_{\infty}$. Then $u$ satisfies one of the following
\begin{itemize}
\item[1)] if $q\in(0,1)$, then $0\leq u(t,x) \leq \bigl[M^{1-q}-(1-q)t\bigr]_+^{\frac1{1-q}}$ for all $(t,x)\in (0,+\infty)\times X$, 

\item[2)] if $q\in(1,+\infty)$ and $u_0\not \equiv 0$, then
$ 0<u(t,x) \leq \bigl[\frac{1}{M^{q-1}}+(q-1)t\bigr]^{-\frac1{q-1}}$ for all $(t,x)\in (0,+\infty)\times X$.
\end{itemize}

Therefore, if $q\in (0,1)$
\[
u(t, \cdot)\equiv 0\quad\text{for all }t\ge T_\ast :=\dfrac{M^{1-q}}{1-q}.
\]
\end{theorem}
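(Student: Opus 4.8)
The plan is to bound the mild solution $u$ from above by the spatially constant profile $\phi(t)$, where $\phi$ solves the scalar Cauchy problem $\phi'=-\phi^{q}$, $\phi(0)=M$, whose global solution is exactly
\[
\phi(t)=\bigl[M^{1-q}-(1-q)t\bigr]_{+}^{\frac1{1-q}}\quad(0<q<1),\qquad
\phi(t)=\Bigl[M^{-(q-1)}+(q-1)t\Bigr]^{-\frac1{q-1}}\quad(q>1),
\]
with $\phi\equiv0$ for $t\ge T_{\ast}=M^{1-q}/(1-q)$ in the first case. Since a nonzero constant does not belong to $\ell^{p}(X,\mu)$ on an infinite graph, one cannot feed $\phi$ directly into Theorem~\ref{thm:parabolic-comparison-infinite} (which in any case requires \ref{IP}); instead the comparison is transferred through the finite Dirichlet subgraphs $G_{\textnormal{dir},n}$ of the exhaustion built in the proof of Theorem~\ref{th: F1}. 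Note that $f(t)=-t|t|^{q-1}$ is continuous, decreasing and vanishes at the origin, so \ref{F1:monotonicity} holds and Theorem~\ref{th: F1} applies; moreover $u\ge0$ by Theorem~\ref{thm:main2} (with $u_{0}\ge0$, $h=0$), and if $u_{0}\equiv0$ there is nothing to prove, so assume $M=\|u_{0}\|_{\infty}>0$.

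\textbf{Upper bound.} First I would establish an $\ell^{\infty}$ estimate for a single resolvent step: for $g\in\ell^{p,+}(X,\mu)\cap\ell^{\infty}(X,\mu)$ and $\lambda>0$, the solution $u$ of $(\operatorname{id}+\lambda\mathcal{A})u=g$ furnished by Theorem~\ref{th: F1}\textbf{(I)} lies in $\ell^{\infty}(X,\mu)$ and satisfies $\|u\|_{\infty}\le\Phi_{\lambda}(\|g\|_{\infty})$, where $\Phi_{\lambda}(s)$ is the unique $\sigma\ge0$ with $\sigma+\lambda\sigma^{q}=s$. Indeed, $u$ is the monotone pointwise limit of the finite-graph solutions $\widehat u_{n}\ge0$ of $(\operatorname{id}+\lambda\mathcal{A}_{n})\widehat u_{n}=\boldsymbol{\pi}_{n}g$ on $G_{\textnormal{dir},n}$ (as in the proof of Theorem~\ref{th: F1}\textbf{(I)}); at a node $x^{\ast}$ realizing the attained maximum of $\widehat u_{n}$ over the finite set $X_{n}$ one has $\Delta_{\textnormal{dir},n}\widehat u_{n}(x^{\ast})\ge0$, whence $\boldsymbol{\pi}_{n}g(x^{\ast})=\widehat u_{n}(x^{\ast})+\lambda\bigl(\widehat u_{n}(x^{\ast})^{q}+\Delta_{\textnormal{dir},n}\widehat u_{n}(x^{\ast})\bigr)\ge\widehat u_{n}(x^{\ast})+\lambda\,\widehat u_{n}(x^{\ast})^{q}$, so $\widehat u_{n}(x^{\ast})\le\Phi_{\lambda}(\|g\|_{\infty})$ by monotonicity of $\sigma\mapsto\sigma+\lambda\sigma^{q}$; letting $n\to\infty$ gives the claim. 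Iterating this estimate along any $\epsilon$-discretization of \eqref{model_problem_q} (so $u_{k}=(\operatorname{id}+\lambda_{k}\mathcal{A})^{-1}u_{k-1}$ with $u_{0}$ the datum, and $u_{k}\in\ell^{p,+}\cap\ell^{\infty}$ for every $k$) yields $0\le u_{k}\le\phi_{k}$, where $\phi_{0}=M$ and $\phi_{k}+\lambda_{k}\phi_{k}^{q}=\phi_{k-1}$; thus the $\epsilon$-approximate solution obeys $0\le u_{\epsilon}(t,\cdot)\le\phi_{k(t)}$. Since $u_{\epsilon}(t)\to u(t)$ in $\ell^{p}(X,\mu)$ by Theorem~\ref{thm:main2}, while $\phi_{k(t)}\to\phi(t)$ as $\epsilon\to0$ by the (standard) convergence of the implicit Euler scheme for the dissipative ODE $\phi'=-\phi^{q}$, we conclude $0\le u(t,x)\le\phi(t)$ for all $x\in X$ and $t\ge0$. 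This yields the bounds in 1) and 2); in the case $0<q<1$ it gives at once $u(t,\cdot)\equiv0$ for $t\ge T_{\ast}$.

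\textbf{Strict positivity for $q>1$.} Here I would use that, for each fixed $N$, the classical solution $u_{N}$ of \eqref{model_problem_q} on the finite connected graph $G_{\textnormal{dir},N}$ with datum $\boldsymbol{\pi}_{N}u_{0}$ satisfies $\boldsymbol{\mathfrak{i}}_{N}u_{N}(t)\le u(t)$ pointwise; this follows from the monotone-limit construction of Theorem~\ref{th: F1}\textbf{(I)}, propagated through the Crandall--Liggett exponential formula (order preservation of $(\operatorname{id}+\lambda\mathcal{A})^{-1}$ on $\ell^{p,+}(X,\mu)$ together with the domain-monotonicity inequality $\boldsymbol{\mathfrak{i}}_{n}(\operatorname{id}+\lambda\mathcal{A}_{n})^{-1}\boldsymbol{\pi}_{n}g\le(\operatorname{id}+\lambda\mathcal{A})^{-1}g$ for $g\ge0$, both stable under composition). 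Fix $x_{0}$ with $u_{0}(x_{0})>0$ and $N$ with $x_{0}\in X_{N}$. Written node-wise on $G_{\textnormal{dir},N}$, and using $u_{N}\ge0$ and $q>1$, the equation reads $\partial_{t}u_{N}(t,x)+c_{N}(t,x)\,u_{N}(t,x)=\mu(x)^{-1}\sum_{y}w(x,y)u_{N}(t,y)\ge0$, with $c_{N}$ built from $u_{N}^{q-1}$ and the degree function of $G_{\textnormal{dir},N}$, hence continuous and bounded on $[0,T]\times X_{N}$; the integrating-factor identity then propagates strict positivity from $x_{0}$ to its neighbours and, by connectedness of $G_{\textnormal{dir},N}$, to all of $X_{N}$ for every $t>0$. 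Therefore $u(t,x)\ge u_{N}(t,x)>0$ for $x\in X_{N}$, $t>0$, and letting $N\to\infty$ gives $u(t,x)>0$ on $X$ for all $t>0$, completing 2).

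\textbf{Main obstacle.} The crux is the transfer from $G$ to the finite Dirichlet subgraphs: the one-step $\ell^{\infty}$ resolvent estimate and, above all, the inequality $\boldsymbol{\mathfrak{i}}_{N}u_{N}(t)\le u(t)$, which require carefully threading the monotone convergence of Theorem~\ref{th: F1}\textbf{(I)} through the resolvent compositions and the Crandall--Liggett limit. The remaining ingredients---the explicit ODE formulas for $\phi$, the finite-graph maximum principle at $x^{\ast}$, the Gronwall-type positivity propagation, and the convergence of implicit Euler for $\phi'=-\phi^{q}$---are routine.
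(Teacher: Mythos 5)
Your argument is correct and, for the barrier/extinction part, is essentially the paper's proof: the paper likewise compares the implicit Euler iterates with the scalar sequence $\theta_k+\lambda_k\theta_k^{q}=\theta_{k-1}$, obtains the one-step $\ell^\infty$ bound by evaluating the resolvent equation at a maximum node of the finite Dirichlet subgraph where $\Delta_{\textnormal{dir},n}\hat u_n\ge 0$ (Lemmas \ref{lem:supersolution}--\ref{lem:upper_barrier_bound}), and passes to the limit first in $n$ (monotone construction of Theorem \ref{th: F1}\textbf{(I)}) and then in $\epsilon$ (m-accretivity of the scalar ODE and Crandall--Liggett). The only real divergence is in the positivity statement for $q>1$: the paper transfers the problem to the finite subgraphs exactly as you do (via $u^{(n)}_\epsilon\le u_\epsilon$ and convergence of the $\epsilon$-approximates on each $G_{\textnormal{dir},n}$ to the classical finite-graph solution), but then simply cites the finite-graph positivity results of Chung et al., whereas you reprove that step from scratch with an integrating-factor/Gronwall propagation of positivity along edges of the connected finite graph. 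Your version is self-contained and equally valid (for $q>1$ the coefficient $u_N^{q-1}+\deg/\mu$ is indeed continuous and bounded on $[0,T]\times X_N$, so the argument goes through); the paper's version is shorter at the cost of an external reference. The ordering-of-limits difference you introduce in the upper bound (establishing the $\ell^\infty$ resolvent estimate on $G$ first by letting $n\to\infty$, then iterating on $G$, rather than iterating on each $G_{\textnormal{dir},n}$ and letting $n\to\infty$ afterwards) is immaterial, since both rest on the same monotone approximation.
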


\begin{remark}\label{rem:construction_approximate_solutions}
    In the proof of Theorem \ref{thm:extinction-infinite} we exploit the fact that the $\epsilon$-approximate solutions $u_\epsilon$ in \eqref{epsilon_approximation} (yielding the mild solution in Theorem \ref{thm:main2}), are obtained as monotone limits of $\epsilon$-approximate solutions $u_\epsilon^{(n)}$ for the corresponding parabolic problem on the sequence of the Dirichlet subgraphs $G_{\operatorname{dir}, n}$ in \eqref{eq:def_Dir_subgraph}, (see the proof of Theorem \ref{th: F1}). In the specific framework of Problem \eqref{model_problem_q} where $h\equiv 0$, an $\epsilon$ discretization $\mathcal{D}_\epsilon$ of $[0,T]$ is a partition of the time interval
	$\left\{ \{t_k\}_{k=0}^N \mid 0= t_0<t_1<\ldots<t_N = T \right\}
	$ with $t_k-t_{k-1} \leq \epsilon$ for every $k=1,\ldots, N$.  Then, the $\epsilon$-approximates subordinate to $\mathcal{D}_\epsilon$ satisfy 
    $$u_\epsilon^{(n)}(0)=u_0=u_\epsilon(0)\,, \quad u_\epsilon^{(n)}(t) = \sum_{k=1}^N  u_k^{(n)}\mathds{1}_{(t_{k-1},t_k]}(t)\leq \sum_{k=1}^N  u_k\mathds{1}_{(t_{k-1},t_k]}(t)= u_\epsilon(t) \quad  \text{for } t\in(0,T]$$
    In particular, since, $u_k^{(n)} \to u_k$ in $\ell^p(X,\mu)$ (as shown in the proof of Theorem \ref{th: F1}),
    $$ \lim_{n \to + \infty} u_\epsilon^{(n)}(t)=u_\epsilon(t) \quad \text{in } \ell^p(X,\mu)\,,\quad  \text{for all } t \in [0,T]\,.$$
   
\end{remark}
In the following subsections we present some preliminary results, respectively for  $q\in(0,1)$  and for  $q\in(1,+\infty)$, which then allow to prove Theorem \ref{thm:extinction-infinite} in Subsection \ref{TH63proof}.

\subsubsection{Upper estimates for the $u_k^{(n)}$'s when $q\in(0,1)$}\label{estimates}
As in the proof of Lemma \ref{lem:bijective-sign-norm} we consider the function $\psi(s)=s-\lambda f(s)$, which here reads 
\begin{equation}\label{eq:vartheta}
    \psi_{q,\lambda} (s) = s + \lambda s|s|^{q-1}, \qquad  q\in (0,1), \; \lambda >0.
\end{equation}

Since $\psi_{q,\lambda}: \R \to\R$ is strictly increasing, its inverse $\psi_{q,\lambda}^{-1}: \R \to\R$ is well-defined and strictly increasing as well.

Fix  $0 \leq u_0 \in \ell^p(X,\mu) \cap \ell^\infty(X,\mu)$ and let $M = \|u_0\|_\infty$.  
\\

\begin{lemma}\label{lem:supersolution}
    Let $q\in(0,1)$ and let $F: \R \to\R$ be defined  by $F(s)= s |s|^{q-1}$.
Given  any partition $0=t_0<t_1<\ldots<t_N= T$ with $\lambda_k:=t_k-t_{k-1}$, define recursively $\theta_k$ by 
\begin{equation}\label{eq:lem:supersolution}
\begin{cases}
\theta_k+\lambda_k\,F(\theta_k)=\theta_{k-1} &\mbox{for } k=1,\ldots, N,\\
 \theta_0=M\geq 0.
\end{cases}
\end{equation}
Let  also $\theta_\epsilon:[0,T]\to\R$ be the piecewise-constant interpolant  $\theta_\epsilon(t)=\theta_k$ for $t\in(t_{k-1},t_k]$, as per equation~\eqref{epsilon_approximation}, where $\epsilon:=\max_k\lambda_k$, and let
\[
\theta(t)=\bigl[M^{1-q}-(1-q)t\bigr]_+^{\frac1{1-q}}.
\]
Then 
$$
\lim_{\epsilon \to 0}|\theta_\epsilon(t) - \theta (t)|=0 \qquad \mbox{for every } t\in [0, T].
$$
\end{lemma}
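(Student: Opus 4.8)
The plan is to recognize the recursion \eqref{eq:lem:supersolution} as the implicit Euler scheme for the scalar Cauchy problem $\theta'(t)+F(\theta(t))=0$, $\theta(0)=M$, and then to show that its piecewise-constant interpolants $\theta_\epsilon$ converge to the explicit solution $\theta(t)=[M^{1-q}-(1-q)t]_+^{1/(1-q)}$.

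First I would record the elementary properties of the sequence $\{\theta_k\}$. Since $q\in(0,1)$, the function $F(s)=s|s|^{q-1}$ is continuous, strictly increasing, with $F(0)=0$, so $\psi_{q,\lambda}(s)=s+\lambda F(s)$ is a strictly increasing bijection of $\R$ and each $\theta_k=\psi_{q,\lambda_k}^{-1}(\theta_{k-1})$ is well-defined; because $\psi_{q,\lambda}(0)=0$ and $\psi_{q,\lambda}(s)\ge s$ for $s\ge0$, an induction gives $0\le\theta_k\le\theta_{k-1}\le\cdots\le\theta_0=M$. Hence $\theta_\epsilon$ is a non-increasing step function valued in $[0,M]$. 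Telescoping the identity $\theta_{k-1}-\theta_k=\lambda_k\theta_k^q$ then yields an integral form $\theta_\epsilon(t)=M-\int_0^t\theta_\epsilon(s)^q\,ds+r_\epsilon(t)$ with $\sup_{t\in[0,T]}|r_\epsilon(t)|\le M^q\epsilon$.

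Next I would pass to the limit $\epsilon\to0$. Given any sequence $\epsilon_j\to0$, the functions $\theta_{\epsilon_j}$ are uniformly bounded and monotone, so Helly's selection theorem provides a subsequence converging pointwise on $[0,T]$ to a non-increasing limit $\theta_\infty$ valued in $[0,M]$; since $0\le\theta_{\epsilon_j}(s)^q\le M^q$, dominated convergence passes to the limit in the integral identity, giving $\theta_\infty(t)=M-\int_0^t\theta_\infty(s)^q\,ds$ for every $t$, i.e. $\theta_\infty$ solves the Cauchy problem in integral form. A direct check shows $\theta$ itself is $C^1$ on $[0,\infty)$ — the only nontrivial point is that the left derivative at $T_\ast=M^{1-q}/(1-q)$ equals $0$, which follows from $\theta(t)^q\to0$ as $t\to T_\ast^-$ — and satisfies $\theta'(t)+F(\theta(t))=0$ for all $t$, hence solves the same integral equation. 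By forward uniqueness for $\theta'=-F(\theta)$ (if $\theta_1,\theta_2$ are solutions with $\theta_1(0)=\theta_2(0)$, then $\tfrac12\tfrac{d}{dt}(\theta_1-\theta_2)^2=-(\theta_1-\theta_2)(F(\theta_1)-F(\theta_2))\le0$ by monotonicity of $F$, so $\theta_1\equiv\theta_2$) we conclude $\theta_\infty=\theta$. Since every subsequence of $\{\theta_\epsilon\}$ has a sub-subsequence converging pointwise to the same $\theta$, the whole family converges: $\theta_\epsilon(t)\to\theta(t)$ for every $t\in[0,T]$. (Alternatively, one could invoke the general convergence theorem for $\epsilon$-discretizations of evolution equations driven by $m$-accretive operators, e.g. \cite[Theorem 4.1]{barbu2010nonlinear} applied on $E=\R$ to the $m$-accretive operator $F$, and then identify the resulting mild solution with the classical solution $\theta$ by uniqueness.)

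The main obstacle is the passage to the limit near the extinction time $T_\ast$: there $F(s)=s^q$ fails to be Lipschitz as $s\to0^+$, so the usual Lipschitz-based stability and uniqueness estimates are unavailable, and care is needed both to verify that the explicit $\theta$ is a genuine classical solution for $t\ge T_\ast$ and to guarantee forward uniqueness of the limiting integral equation. Both issues are resolved by the monotonicity of $F$, which provides uniqueness with no Lipschitz bound, together with the fact that $0$ is an equilibrium of $\theta'=-F(\theta)$ reached with zero slope.
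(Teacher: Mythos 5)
Your proof is correct, and it takes a genuinely more elementary route than the paper. The paper's argument is short: it verifies that $\theta$ is a global strong (hence mild) solution of $\dot\theta+F(\theta)=0$, observes that $\theta_\epsilon$ is an $\epsilon$-approximate solution of that scalar problem, notes that $F$ is $m$-accretive on $\R$ by monotonicity, and then invokes the Crandall--Liggett-type convergence theorem (\cite[Theorem 4.1]{barbu2010nonlinear}, the same result already used for Theorem \ref{thm:main2}) together with uniqueness of mild solutions to conclude --- exactly the alternative you sketch in your final parenthetical. Your main argument replaces that citation with a self-contained chain: the telescoped integral identity $\theta_\epsilon(t)=M-\int_0^t\theta_\epsilon(s)^q\,ds+r_\epsilon(t)$ with $|r_\epsilon|\le M^q\epsilon$, Helly compactness for the uniformly bounded monotone family, passage to the limit by dominated convergence, and forward uniqueness for the limiting integral equation via $\tfrac12\tfrac{d}{dt}(\theta_1-\theta_2)^2\le 0$. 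The trade-off is transparency versus length: your version makes explicit where the non-Lipschitz character of $s\mapsto s^q$ at $s=0$ could cause trouble and why monotonicity alone suffices (a point the paper absorbs into the black box of $m$-accretivity), and it also checks the $C^1$ matching of $\theta$ at the extinction time $T_\ast$, which the paper asserts without comment; the paper's version is shorter and reuses machinery already set up for the main theorems. Both proofs ultimately rest on the same structural fact, namely the accretivity (monotonicity) of $F$ on $\R$.
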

\begin{proof}

Define $T_\ast:=\frac{M^{1-q}}{1-q}$.
For $t<T_\ast$ we have $\theta(t)>0$ and
\[
\dot \theta(t)= -\bigl[M^{1-q}-(1-q)t\bigr]^{\frac{q}{1-q}}
=-F(\theta(t)).
\]
For $t\ge T_\ast$, it holds $\theta(t)\equiv 0$ and it also solves $\dot \theta+ F(\theta)=0$. Hence $\theta$ is a global strong solution of
\begin{equation}
\label{eq:ode}
\begin{cases}
\dot \theta + F(\theta)=0, \\
\theta(0) = M.
\end{cases}
\end{equation}
The sequence $\{\theta_k\}_{k=1}^N$ defined in \eqref{eq:lem:supersolution} is well-defined and satisfies the relation $\theta_k = \psi_{q,\lambda_k}^{-1}(\theta_{k-1})$. In particular, the implicit Euler system~\eqref{eq:lem:supersolution} admits a solution and the corresponding interpolant $\theta_\epsilon$ is an $\epsilon$-approximation to problem~\eqref{eq:ode}. 
Using the fact that $\operatorname{sgn}(F(\theta) - F(\eta)) =  \operatorname{sgn}(\theta - \eta)$ by monotonicity, 
it is not difficult to check that $F$ is an $m$-accretive operator on $\mathbb{R}$. Thus, by \cite[Theorem 4.1]{barbu2010nonlinear}, $\theta_\epsilon$ converges to the unique mild solution to problem~\eqref{eq:ode}.   Thus,  uniqueness of mild solutions for $m$-accretive $F$, yields $| \theta_\epsilon (t) - \theta(t)| \to 0$ as $\epsilon \to 0$, for every fixed $t$.
\end{proof}

For a finite connected subset $X_n \subseteq X$, consider the Dirichlet subgraph $G_{\textnormal{dir},n}$ as defined in~\eqref{eq:def_Dir_subgraph}. Fix  any partition $0=t_0<t_1<\ldots<t_N= T$ with $\lambda_k:=t_k-t_{k-1}$, and define the finite sequence $\{u_k^{(n)}\}_{k=1}^N$ recursively by
\begin{equation}\label{eq:Euler_dirn}
\begin{cases}
   (\operatorname{id} + \lambda_k (F + \Delta_{\textnormal{dir}, n}))u_k^{(n)}  = u_{k-1}^{(n)} \quad \text{in }X_n & \text{for } k = 1,\dots,N,\\[6pt]
   u^{(n)}_0 = \boldsymbol{\pi}_n u_0\,.
\end{cases}
\end{equation}
This sequence is well-defined for every $k$, as shown by Lemma~\ref{lem:bijective-sign-norm} (recall that $F$ satisfies condition \textnormal{\ref{F1:monotonicity}} ). Furthermore, the $\ell^{\infty}$ norm of the functions $u_k^{(n)}$ satisfies the recurrence estimate provided below. 

\begin{lemma}\label{lem:upper_bound_k}
Consider the inverse $\psi_{q,\lambda}^{-1}: \R \to\R$ of the map introduced in \eqref{eq:vartheta} and the sequence $\{u_k^{(n)}\}_{k=1}^N$ given in \eqref{eq:Euler_dirn}. For each $k = 1,\dots,N$, define
\begin{equation}
\beta_k^{(n)} = \psi_{q,\lambda_k}^{-1}(M_{k-1}^{(n)}), \quad \text{where } M_{k-1}^{(n)} = \|u_{k-1}^{(n)}\|_\infty.
\end{equation}
Then, it holds that
\[
\|u_{k}^{(n)}\|_\infty \leq \beta_k^{(n)}.
\]
\end{lemma}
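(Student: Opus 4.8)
The plan is to show that the constant function on $X_n$ with value $\beta_k^{(n)}$ is a supersolution of the $k$-th implicit Euler step on the finite Dirichlet subgraph $G_{\textnormal{dir},n}$ with datum $u_{k-1}^{(n)}$, and then to conclude via the discrete comparison principle of Corollary~\ref{cor:min}. First I would fix the signs by induction on $k$: since $u_0^{(n)}=\boldsymbol{\pi}_n u_0\geq 0$ and, by Lemma~\ref{lem:bijective-sign-norm}, the unique solution of $(\operatorname{id}+\lambda_k(F+\Delta_{\textnormal{dir},n}))u_k^{(n)}=u_{k-1}^{(n)}$ preserves the sign of its datum, we get $u_k^{(n)}\geq 0$ on $X_n$ for every $k$, hence $M_{k-1}^{(n)}=\|u_{k-1}^{(n)}\|_\infty\geq 0$. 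Because $\psi_{q,\lambda_k}\colon\mathbb{R}\to\mathbb{R}$ is a strictly increasing bijection with $\psi_{q,\lambda_k}(0)=0$ (as already recorded in the proofs of Theorem~\ref{thm:min_principle} and Lemma~\ref{lem:bijective-sign-norm}), its inverse is increasing and vanishes at $0$, so $\beta_k^{(n)}=\psi_{q,\lambda_k}^{-1}(M_{k-1}^{(n)})\geq 0$.

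Next, regarding $\beta_k^{(n)}$ as a constant function on $X_n$, I would use that the diffusion part of $\Delta_{\textnormal{dir},n}$ annihilates constants, so that $\Delta_{\textnormal{dir},n}\beta_k^{(n)}(x)=\frac{\kappa_{\textnormal{dir},n}(x)}{\mu(x)}\beta_k^{(n)}\geq 0$ for all $x\in X_n$, since $\kappa_{\textnormal{dir},n}\geq 0$ and $\beta_k^{(n)}\geq 0$. Recalling $F(s)=s|s|^{q-1}$ and $\psi_{q,\lambda_k}(s)=s+\lambda_k s|s|^{q-1}$, this yields
\[
\bigl(\operatorname{id}+\lambda_k(F+\Delta_{\textnormal{dir},n})\bigr)\beta_k^{(n)}(x)
=\psi_{q,\lambda_k}\!\bigl(\beta_k^{(n)}\bigr)+\lambda_k\frac{\kappa_{\textnormal{dir},n}(x)}{\mu(x)}\beta_k^{(n)}
\geq \psi_{q,\lambda_k}\!\bigl(\beta_k^{(n)}\bigr)=M_{k-1}^{(n)}\geq u_{k-1}^{(n)}(x).
\]
Since $u_k^{(n)}$ solves $(\operatorname{id}+\lambda_k(F+\Delta_{\textnormal{dir},n}))u_k^{(n)}=u_{k-1}^{(n)}$ on the finite graph $G_{\textnormal{dir},n}$, which contains no infinite path and on which $F$ satisfies \ref{F1:monotonicity}, Corollary~\ref{cor:min} applies to the two data, and since $(\operatorname{id}+\lambda_k(F+\Delta_{\textnormal{dir},n}))\beta_k^{(n)}\geq u_{k-1}^{(n)}$ it gives $u_k^{(n)}(x)\leq\beta_k^{(n)}$ for every $x\in X_n$. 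Together with $u_k^{(n)}\geq 0$ this is precisely $\|u_k^{(n)}\|_\infty\leq\beta_k^{(n)}$.

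None of these steps is genuinely delicate; the only point that deserves attention is that on the \emph{Dirichlet} subgraph $\Delta_{\textnormal{dir},n}$ does \emph{not} kill constants — the boundary weight $b_{\textnormal{dir},n}$ absorbed into $\kappa_{\textnormal{dir},n}$ contributes a nonnegative term — which is exactly what makes a \emph{constant} an admissible supersolution, and it is what makes the bound hold uniformly in $n$ and $k$.
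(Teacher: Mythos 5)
Your proof is correct, and it takes a slightly different route from the paper's. The paper argues directly: since $G_{\mathrm{dir},n}$ is finite, $u_k^{(n)}$ attains its maximum at some node $x_k$, where $\Delta_{\mathrm{dir},n}u_k^{(n)}(x_k)\geq 0$; evaluating the Euler step at $x_k$, discarding that nonnegative term, and inverting $\psi_{q,\lambda_k}$ gives $u_k^{(n)}(x_k)\leq\psi_{q,\lambda_k}^{-1}(M_{k-1}^{(n)})$ at once. You instead verify that the constant $\beta_k^{(n)}$ is a supersolution of the same implicit step — correctly noting that on the Dirichlet subgraph the diffusion part kills constants while the killing term $\kappa_{\textnormal{dir},n}\geq 0$ contributes a nonnegative remainder — and then invoke the elliptic comparison principle of Corollary~\ref{cor:min} (legitimately applicable on the finite connected graph $G_{\mathrm{dir},n}$, case a) of Theorem~\ref{thm:min_principle}, with $f(s)=-s|s|^{q-1}$ satisfying \ref{F1:monotonicity}). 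The two arguments are close in spirit — the evaluation-at-the-maximum step is essentially the engine inside the comparison principle — but the paper's version is more self-contained and avoids any discussion of how $\Delta_{\textnormal{dir},n}$ acts on constants, whereas yours reuses the already-established comparison machinery and makes explicit the structural reason a constant barrier works on a Dirichlet subgraph. Both yield the bound uniformly in $n$, as needed for the subsequent limit.
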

\begin{proof}
Since $u_0 \geq 0$, by a recursive argument and Theorem~\ref{thm:min_principle}-$(a)$, which obviously holds for finite graphs, $u_k^{(n)}\geq 0$. Let $x_k\in X_n$ be such that $u_k^{(n)}(x_k)=\|u_k^{(n)}\|_\infty$.  
Since $G_{\mathrm{dir},n}$ is finite, such a node exists and, since $u_k^{(n)}$ attains its maximum at $x_k$ 
$\Delta_{\mathrm{dir},n}u_k^{(n)}(x_k)\ge0$. 

Evaluating \eqref{eq:Euler_dirn} at $x_k$ gives
$$
u_k^{(n)}(x_k)+\lambda_k
\big(u_k^{(n)}(x_k)^{q}+\Delta_{\mathrm{dir},n}u_k^{(n)}(x_k)\big)
=u_{k-1}^{(n)}(x_k)\leq M_{k-1}^{(n)},
$$
which, using the notation in~\eqref{eq:vartheta}, can be rewritten as 
$$
\psi_{q,\lambda_k}\big(u_k^{(n)}(x_k)\big) + \lambda_k\Delta_{\mathrm{dir},n}u_k^{(n)}(x_k)\leq M_{k-1}^{(n)}.
$$
Discarding the nonnegative graph Laplacian term yields
\[
\psi_{q,\lambda_k}\big(u_k^{(n)}(x_k)\big)\le M_{k-1}^{(n)},
\]
which implies,
$$
u_k^{(n)}(x_k)\le \psi_{q,\lambda_k}^{-1}(M_{k-1}^{(n)}) = \beta_k^{(n)},
$$
thus concluding the proof.
\end{proof}

Finally we prove upper estimates of the $u_k^{(n)}$ in terms of the $\theta_k$ given in \eqref{eq:lem:supersolution}.

\begin{lemma}\label{lem:upper_barrier_bound}
Let $\{u_k^{(n)}\}_{k=1}^N$ be the sequence given in \eqref{eq:Euler_dirn}. For all $k=0,\ldots, N$ and $n\in \N$, the following inequality holds 
$$\|u_k^{(n)}\|_\infty \leq \theta_k,$$ where the sequence $\{\theta_k\}_{k=1}^N$ is defined as in \eqref{eq:lem:supersolution}.
\end{lemma}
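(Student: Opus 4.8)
The plan is to prove the inequality by induction on $k$, using Lemma~\ref{lem:upper_bound_k} together with the monotonicity of the map $\psi_{q,\lambda_k}^{-1}$ and the defining recursion for $\{\theta_k\}$.

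First I would establish the base case $k=0$: since $u_0^{(n)}=\boldsymbol{\pi}_n u_0$ is the restriction of $u_0$ to $X_n$, we have $\|u_0^{(n)}\|_\infty\le\|u_0\|_\infty=M=\theta_0$. For the inductive step, suppose that $\|u_{k-1}^{(n)}\|_\infty\le\theta_{k-1}$, i.e. $M_{k-1}^{(n)}\le\theta_{k-1}$ in the notation of Lemma~\ref{lem:upper_bound_k}. By that lemma, $\|u_k^{(n)}\|_\infty\le\beta_k^{(n)}=\psi_{q,\lambda_k}^{-1}\bigl(M_{k-1}^{(n)}\bigr)$. Since $\psi_{q,\lambda_k}$ is strictly increasing on $\R$ (as recalled just below~\eqref{eq:vartheta}), its inverse $\psi_{q,\lambda_k}^{-1}$ is strictly increasing as well, whence
\[
\|u_k^{(n)}\|_\infty\le\psi_{q,\lambda_k}^{-1}\bigl(M_{k-1}^{(n)}\bigr)\le\psi_{q,\lambda_k}^{-1}(\theta_{k-1}).
\]
Finally I would observe that the recursion~\eqref{eq:lem:supersolution}, namely $\theta_k+\lambda_k F(\theta_k)=\theta_{k-1}$, is exactly the identity $\psi_{q,\lambda_k}(\theta_k)=\theta_{k-1}$, so $\psi_{q,\lambda_k}^{-1}(\theta_{k-1})=\theta_k$; combining this with the previous display closes the induction and yields $\|u_k^{(n)}\|_\infty\le\theta_k$ for all $k=0,\dots,N$ and all $n\in\N$.

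There is no real obstacle here: the substantive content has already been carried out in Lemmas~\ref{lem:supersolution} and~\ref{lem:upper_bound_k}, and the present statement is just the clean comparison that propagates the one-dimensional ODE barrier $\theta_k$ through the implicit Euler scheme on each Dirichlet subgraph. The only point to be careful about is that $\beta_k^{(n)}$ depends on $M_{k-1}^{(n)}$ and not directly on $\theta_{k-1}$, so one must invoke monotonicity of $\psi_{q,\lambda_k}^{-1}$ rather than a direct substitution; this is exactly where the strict monotonicity noted after~\eqref{eq:vartheta} is used.
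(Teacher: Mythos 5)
Your proof is correct and is essentially identical to the paper's: both argue by induction on $k$, using Lemma~\ref{lem:upper_bound_k} for the one-step bound, the strict monotonicity of $\psi_{q,\lambda_k}^{-1}$ to pass from $M_{k-1}^{(n)}$ to $\theta_{k-1}$, and the identity $\theta_k=\psi_{q,\lambda_k}^{-1}(\theta_{k-1})$ from the recursion~\eqref{eq:lem:supersolution}. No gaps.
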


\begin{proof}
The scalar sequence $\{\theta_k\}_{k=1}^N$ from Lemma~\ref{lem:supersolution} satisfies the recursive relation
\[
\theta_k + \lambda_k\theta_k^{q} = \theta_{k-1} \mbox{ for } k=1,\ldots, N, \quad \text{with} \quad \theta_0 = \|u_0\|_\infty.
\]
Equivalently, $\theta_k = \psi_{q,\lambda_k}^{-1}(\theta_{k-1})$ for $k=1,\ldots, N$.

We proceed by induction on $k$. For the base case $k=0$, clearly,
\[
\|u_0^{(n)}\|_\infty \leq \|u_0\|_\infty = \theta_0.
\]

Assume now by induction that $M_{k}^{(n)} = \|u_{k}^{(n)}\|_\infty \leq \theta_k$ for every $n$. From Lemma~\ref{lem:upper_bound_k}, we already have that
\[
M_{k+1}^{(n)} = \|u_{k+1}^{(n)}\|_\infty \leq \beta_{k+1}^{(n)}.
\]
By applying the induction hypothesis and using the monotonicity of $\psi_{q,\lambda_k}^{-1}$, we obtain
\[
\|u_{k+1}^{(n)}\|_\infty \leq \beta_{k+1}^{(n)} = \psi_{q,\lambda_{k+1}}^{-1}(M_{k}^{(n)}) \leq \psi_{q,\lambda_{k+1}}^{-1}(\theta_k) = \theta_{k+1}.
\]

Thus, by induction, for all $k = 0, \ldots, N$,
\[
\|u_k^{(n)}\|_\infty \leq \theta_k,
\]
independently of $n$.
\end{proof}

\subsubsection{Positivity of solutions in the case $q\in (1,+\infty)$}\label{positivity}

Let $u_0 \geq 0$ with $u_0\not \equiv 0$. For $\epsilon > 0$, let $u_\epsilon$ be the $\epsilon$-approximate solutions in \eqref{epsilon_approximation} defined on the entire graph $G$.   As explained in Remark \ref{rem:construction_approximate_solutions}, by construction, we have
\begin{equation}\label{lim_n_eps}
u_\epsilon^{(n)}(t,x) \leq u_\epsilon (t,x) \quad \text{and} \quad u_\epsilon(t,x) = \lim_{n \to \infty} u_\epsilon^{(n)}(t,x), \quad \text{for every } (t,x)\in [0,T]\times X\,.
\end{equation}

For every fixed $n$, consider on the finite graph $G_{\operatorname{dir},n}$ the ``mild'' solution $u^{(n)}$ to which the sequence $\{u_\epsilon^{(n)}\}_{\epsilon}$ converges in $\ell^p(X,\mu)$ for all $t\in [0,T]$. For finite graphs, mild, strong and classic solutions are equivalent. Since $q\in (1,+\infty)$, from~\cite[Theorems 4.7-4.8]{chung2011extinction}, $u^{(n)} >0$ in $X_n$ and therefore there exists $\epsilon = \epsilon(n)$ such that for every $0<\epsilon \leq \epsilon(n)$ it holds
$$
u^{(n)}_\epsilon (t,x) \geq\frac{1}{2}  u^{(n)} (t,x) > 0 \quad \text{for all } (t,x)\in [0,T]\times  X_n. 
$$

 Fix $x\in X$ and $t\in[0,T]$. Let $X_{n_0}$ be the first element of the exhaustion $\{X_n\}_n$ such that $x\in X_{n_0}$. By the inequality above, and since the convergence of the sequence $\{u_\epsilon\}_{\epsilon}$ to the mild solution $u$ in $\ell^p(X,\mu)$ implies the pointwise convergence for every fixed $t\in [0,T]$ and $x\in X$ , we get the following pointwise estimate:
$$
u(t,x) = \lim_{\epsilon\rightarrow 0} u_{\epsilon} (t,x)\geq \lim_{\epsilon\rightarrow 0} u^{(n_0)}_{\epsilon} (t,x) \geq \frac{1}{2} u^{(n_0)}(t,x) >0.
$$
This shows the positivity of the mild solution $u$ for all $(t,x)\in [0,T]\times X$.

\subsubsection{Proof of Theorem \ref{thm:extinction-infinite}}\label{TH63proof}

We first prove the statement $1)$. Fix $\epsilon > 0$. As in Remark \ref{rem:construction_approximate_solutions} and in Section \ref{positivity}, we denote by $\{u_\epsilon^{(n)}\}_n$ the sequence of $\epsilon$-approximate solutions constructed on each finite Dirichlet subgraph $G_{\mathrm{dir},n}$, and by $u_\epsilon$ the $\epsilon$-approximate solution defined on the entire graph $G$.

Let $\theta_\epsilon$ be the piecewise-constant interpolant defined in Lemma \ref{lem:supersolution}. Lemma~\ref{lem:upper_barrier_bound} ensures that $0 \leq u_\epsilon^{(n)}(t,x) \leq \theta_\epsilon(t)$ for all $n$ and all $(t,x)\in [0,T]\times  X_n$, which by \eqref{lim_n_eps} implies
$$
0\leq u_\epsilon(t,x) \leq \theta_\epsilon(t)$$
for all $t\in [0,T]$, locally uniformly in $x\in X$. For fixed $t\in [0,T]$, given that the mild solution $u$ satisfies
$$
\lim_{\epsilon \to 0} \| u(t) - u_\epsilon(t) \|_p = 0 \,,
$$
it follows that
$$
0\leq u(t,x) = \lim_{\epsilon \to 0} u_\epsilon(t,x) \leq \lim_{\epsilon \to 0} \theta_\epsilon(t) = \theta(t) \quad \text{for all } x \in X,
$$
where the last equality comes from Lemma~\ref{lem:supersolution}.

Next we turn to the statement $2)$. In this case $q\in(1,+\infty)$ and we need to replace $\theta$ with 
$$
\theta(t) = \bigl[\frac{1}{M^{q-1}}+(q-1)t\bigr]^{-\frac1{q-1}}, \qquad M = \|u_0\|_\infty.
$$
Then, all estimates in the proof of $1)$ follow in the same way with minor changes. The positivity has already been proved in Section \ref{positivity}. $\Box$

\begin{remark}\label{rem:caso_con_segno}
We observe that, under the assumption of Theorem \ref{thm:main2}, the proof of extinction in finite time when $q\in(0,1)$ can be adapted to the case of a sign changing initial datum $u_0 \in \ell^p(X,\mu)\cap \ell^\infty(X,\mu)$  in the following way. Consider the sequences $\{u_k^{(n),+}\}_k$ and $\{v_k^{(n),-}\}_k$ defined recursively by \eqref{eq:Euler_dirn} with starting value, respectively $u_0^+\geq 0$ and $-u_0^- \geq 0$ (see \eqref{pm}). Namely,
\begin{equation}\label{eq:Euler_dirn+}
\begin{cases}
   (\operatorname{id} + \lambda_k (F + \Delta_{\textnormal{dir}, n}))u_k^{(n),+}  = u_{k-1}^{(n),+} & \text{for } k = 1,\dots,N,\\[6pt]
   u^{(n),+}_0 = \boldsymbol{\pi}_n u_0^+ 
\end{cases}
\end{equation}
and
\begin{equation}\label{eq:Euler_dirn-}
\begin{cases}
   (\operatorname{id} + \lambda_k (F + \Delta_{\textnormal{dir}, n}))v_k^{(n),-}  = v_{k-1}^{(n),-} & \text{for } k = 1,\dots,N,\\[6pt]
   v^{(n),-}_0 =- \boldsymbol{\pi}_n u_0^-.
\end{cases}
\end{equation} 
Applying Lemma \ref{lem:upper_barrier_bound}, we infer that $u_{k}^{(n),+}\leq \theta_k$ and $v_{k}^{(n),-}\leq \theta_k$. Furthermore, since $\boldsymbol{\pi}_n u_0^{+} \geq \boldsymbol{\pi}_n u_0$ and $-\boldsymbol{\pi}_n u_0^{-} \geq -\boldsymbol{\pi}_n u_0$, from Corollary \ref{cor:min} applied recursively, we also get that $u_{k}^{(n)} \leq u_{k}^{(n),+}$ and $u_{k}^{(n)} \geq - v_{k}^{(n),-}$. Finally, we set $u_{k}^{(n),-}=-v_{k}^{(n),-}$ and we conclude that $u_{k}^{(n),-} \leq u_{k}^{(n)} \leq u_{k}^{(n),+}$ for $k=0, \ldots, N$, and, for the corresponding piecewise-constant interpolants,

$$-\theta_\epsilon(t) \leq u_\epsilon^{(n),-} (t,x) \leq   u_\epsilon^{(n)} (t,x) \leq u_\epsilon^{(n),+} (t,x)  \leq \theta_\epsilon(t).$$ 
Hence the conclusion follows as in the proof of Theorem~\ref{thm:extinction-infinite}-(1). 

\end{remark}

\textbf{Acknowledgments.} We thank Rados{\l}aw K. Wojciechowski   for valuable comments and helpful suggestions. E. Berchio, M. Vallarino and A. G. Setti are members of the Gruppo Nazionale per l'Analisi Matematica, la Probabilit\`a e le loro Applicazioni (GNAMPA, Italy) of the Istituto Nazionale di Alta Matematica (INdAM, Italy). The research of E. Berchio was carried out within the PRIN 2022 project 2022SLTHCE - Geometric-Analytic Methods for PDEs and Applications GAMPA, funded by European Union - Next Generation EU within the PRIN 2022 program (D.D. 104 - 02/02/2022 Ministero dell'Universit\`a e della Ricerca). D. Bianchi is supported by the Startup Fund of Sun~Yat-sen~University.  This manuscript reflects only the authors' views and opinions and the Italian Ministry cannot be considered responsible for them.

\printbibliography
\end{document}